\newcommand{\sing}{{\rm Sing}}
\newtheorem{teo}{Theorem}[section]
\newtheorem{prop}[teo]{Proposition}
\newtheorem{ddef}[teo]{Definition}
\newtheorem{example}[teo]{Example}
\newtheorem{cor}[teo]{Corollary}
\newtheorem{lem}[teo]{Lemma}
\newtheorem*{cor*}{Corollary}
\newtheorem{maintheorem}{Theorem}
\newtheorem*{remark}{Remark}
\newtheorem*{lem*}{Lemma}
\newtheorem*{teorA'}{Theorem A'}
\theoremstyle{definition}
\newcommand{\R}{\mathbb{R}}
\newcommand{\Z}{\mathbb{Z}}
\newcommand{\C}{\mathbb{C}}
\newcommand{\A}{{\mathcal{A}}}
\newcommand{\F}{\mathcal{F}}
\newcommand{\LL}{{\mathcal L}}
\newcommand{\im}{{\rm Im}}
\newcommand{\re}{{\rm Re}}
\newcommand{\cl}[1]{\mbox{$\mathcal{#1}$}}
\newcommand{\ov}[1]{\mbox{$\overline{#1}$}}
\newcommand{\diml}{\dim_{\mathcal L}}
\newcommand{\bs}[1]{\mbox{$\boldsymbol{#1}$}}
\newcommand{\ind}{{\rm Ind}}
\newcommand*\xbar[1]{ %
   \hbox{ %
     \vbox{%
       \hrule height 0.3pt 
       \kern0.35ex
       \hbox{%
         \kern-0.1em
         \ensuremath{#1}%
         \kern-0.1em
       }%
     }%
   }%
}
\newcommand*\xxbar[1]{%
   \hbox{%
     \vbox{%
       \hrule height 0.3pt 
       \kern0.4ex
       \hbox{%
         \kern-0.1em
         \ensuremath{#1}%
         \kern-0.1em
       }%
     }%
   }%
}
 \newenvironment{prova}{\noindent {\bf Proof:}}
{\hfill \framebox[7pt]{} \mbox{} \medskip}
\begin{document}

\setcounter{section}{0}
\setcounter{teo}{0}
\setcounter{exe}{0}

\author{Jane Bretas \& Arturo Fern\'andez-P\'erez \& Rog\'erio   Mol}
\title{Holomorphic foliations tangent to Levi-flat subsets}
\maketitle

\begin{abstract}
We study Segre varieties associated to Levi-flat subsets in complex manifolds and apply them to establish local and global results on the integration of tangent holomorphic foliations.
\end{abstract}

\footnotetext[1]{ {\em 2010 Mathematics Subject Classification.}
Primary 32S65 ; Secondary 32V40. } \footnotetext[2]{{\em
Keywords.} Holomorphic foliation, CR-manifolds, Levi-flat varieties.}
\footnotetext[3]{First author partially financed by a CNPq Ph.D. fellowship. Second an third authors partially financed by CNPq-Universal}

 \medskip \medskip

\section{Introduction}
Let  $H \subset M$ be a real analytic hypersurface, where $M$ is a complex manifold of
$\dim_{\C} M = N$.  Let $H_{reg}$ denote its {\em regular} part, that is, the collection of all points near which $H$  is a manifold of maximal dimension.
For each   $p \in H_{reg}$, there is a unique complex hyperplane $\cl{L}_{p}$ contained in the tangent space $T_{p}H_{reg}$.
This  defines a real analytic distribution $p \mapsto \cl{L}_{p}$ of complex hyperplanes in $T H_{reg}$.  When this distribution is integrable in the sense of Frobenius, we say that $H$ is a {\em Levi-flat} hypersurface. The resulting foliation in $H$, denoted by $\cl{L}$,
is known as  {\em Levi foliation}. A normal form for  such an object was given by
E. Cartan \cite[Th. IV]{cartan1933}: for each $p \in H_{reg}$, there are   holomorphic coordinates $(z_{1},\ldots,z_{N})$ in a neighborhood $U$ of $p$
such that
\begin{equation}\label{formalocal-hlf}
H_{reg} \cap U = \{\im(z_{N}) = 0\}.
\end{equation}
 As a consequence, the leaves of
$\cl{L}$
 have local equations $z_{N} = c$, for $c \in \R$.

Cartan's local trivialization   allows the extension the Levi foliation  to a non-singular holomorphic foliation  in a neighborhood of  $H_{reg}$ in $M$, which is unique as a germ around $H_{reg}$.
In general, it is not possible to extend    $\cl{L}$   to a singular holomorphic foliation in a neighborhood of
$\overline{H_{reg}}$. There are examples of Levi-flat hypersurfaces whose Levi foliations extend to   $k$-webs in the ambient space \cite{brunella2007,fernandez2013}.  However
there is an extension  in some ``holomorphic lifting'' of $M$ \cite{brunella2007}.
If   a  foliation $\cl{F}$ in the ambient space $M$ coincides with the     Levi foliation on $H_{reg}$, we say either that $H$ is \emph{invariant} by $\cl{F}$ or that $\cl{F}$ is \emph{tangent} to $H$.

Locally, germs of codimension one foliations at $(\C^{N},0)$ tangent to real analytic   Levi-flat hypersurfaces  are   given by the levels of  meromorphic functions --- possibly holomorphic --- according to a  theorem by D. Cerveau and
A. Lins Neto  \cite{cerveau2011}.  Questions involving the global integrability of
codimension one foliations in $\mathbb{P}^{N}$ tangent to Levi-flat hypersurfaces where addressed
by J. Lebl in \cite{jiri2012}. For instance,  if $H$ is a real algebraic Levi-flat hypersurface
tangent to a codimension one foliation $\F$ in  $\mathbb{P}^{N}$, then $\F$ admits a rational first integral $R$ and there is a real algebraic curve $S \subset \C$ such that $H \subset \overline{R^{-1}(S)}$.

Our goal in this paper is to establish local and global integrability results for foliations tangent to
real analytic  \emph{Levi flat} subsets. A real analytic subset $H \subset M$, where $M$ is an $N$-dimensional complex manifold, is called
\emph{Levi-flat} if it has real dimension $2n + 1$ and its regular part $H_{reg}$ is foliated by
complex varieties of   dimension $n$ (Section \ref{levi-section}, Definition \ref{sclf}). This is called \emph{Levi foliation} and $n$  is  the \emph{Levi dimension} of $H$.  When $N = n+1$, we recover the definition of Levi-flat hypersurface.
This object appears in  M. Brunella's  study   on Levi-flat hypersurfaces \cite{brunella2007}, as the result of the lifting
of a Levi-flat hypersurface  to the
 the projectivized cotangent bundle of the ambient space by means of
the Levi foliation  (see Section \ref{section-examples}).

Key ingredients in the study of integrability properties of Levi-flat hypersurfaces are
\emph{Segre varieties}. Their structure  is    used  in Brunella's geometric  proof for the
local integrability of foliations tangent to Levi-flat hypersurfaces \cite{brunella2011} as well as  in Lebl's global integrability results  \cite{jiri2012}.   Segre varieties for   Levi-flat subsets are the cornerstone of our work. Their definition,
along with
  main properties,  are presented in Section \ref{segre-section}.
Recently, a research paper on Levi flat subsets, also founded on the study of Segre varieties,  has been released  \cite{pinchuk2016}. It has an  approach to Segre varieties   slightly different from ours, although leading to equivalent constructions.

Given a Levi-flat subset $H$ of Levi dimension $n$, there is a unique complex variety $H^{\imath}$ of dimension $n+1$,
called \emph{intrinsic complexification} or $\imath$-\emph{complexification},  defined in a neighborhood of $\overline{H_{reg}}$  containing $\overline{H_{reg}}$ \cite[Th. 2.5]{brunella2007}. If $H$ is tangent to a foliation $\F$ of dimension $n$ in the
ambient space, then $H^{\imath}$ is invariant by $\F$.
Our integration results are stated in terms of the $\imath$-complexification $H^{\imath}$ and  the foliation $\F^{\imath}$,
  the restriction of $\F$ to $H^{\imath}$.
For real analytic  Levi-flat subsets in projective spaces, we can state the following theorem, to be proved along Sections \ref{integration-section} and \ref{integration1-section}:

\begin{maintheorem}
\label{int-racional-R}
Let $H\subset \mathbb{P}^N$, $N>3$, be a real analytic Levi-flat subset  of Levi dimension $n$ invariant by a $n$-dimensional holomorphic foliation $\mathcal{F}$ on $\mathbb{P}^N$. Suppose that $n>(N-1)/2$. If the Levi foliation $\mathcal{L}$ has infinitely many algebraic leaves, then:
\begin{enumerate}
\item the $\imath$-complexification $H^{\imath}$ of $H$ extends to an algebraic variety in $\mathbb{P}^N$;
\item the   foliation $\mathcal{F}^{\iota}=\mathcal{F}|_{H^{\imath}}$ has a rational first integral $R$ in $H^{\imath}$;
\item there exists a real algebraic curve $S\subset \mathbb{C}$ such that $H\subset \overline{R^{-1}(S)}$. In particular $H$ is semialgebraic.
\end{enumerate}
\end{maintheorem}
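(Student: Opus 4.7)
\medskip

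\noindent\textbf{Proof plan.}
The plan is to prove the three conclusions in the order stated, bootstrapping each from the previous, with the Segre variety machinery of Section~\ref{segre-section} as the main technical tool. The essential input is the interaction between Segre varieties of $H$ and the infinite family of algebraic leaves of $\mathcal{L}$.

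For part (1), I first observe that, since $\mathcal{F}$ is tangent to $H$, every leaf of the Levi foliation $\mathcal{L}$ lies inside a leaf of $\mathcal{F}$, and hence inside $H^{\imath}$. The hypothesis therefore yields an infinite family $\{L_{\alpha}\}$ of $n$-dimensional complex algebraic subvarieties of $\mathbb{P}^{N}$, all contained in $H^{\imath}$. I would then use the fact, recorded in Section~\ref{segre-section}, that the Segre variety $\Sigma_{p}$ at a generic $p\in H_{reg}$ contains the Levi leaf through $p$ and depends in an algebraic-analytic way on $p$. By density, through a generic point $p$ there pass Segre-many algebraic leaves; the Zariski closure of the union $\bigcup L_{\alpha}$ is thus an irreducible projective variety containing $H_{reg}$. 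The dimension bound $n>(N-1)/2$ is crucial at this stage: it guarantees that $\dim H^{\imath}=n+1$ satisfies $2(n+1)-N\geq 2$, so two generic algebraic Levi leaves meet inside $H^{\imath}$ in the expected codimension, and the family is rigid enough to Zariski-generate precisely the algebraic variety extending $H^{\imath}$, rather than something of larger dimension.

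For part (2), once $H^{\imath}$ is known to be a projective algebraic variety of dimension $n+1$, the restriction $\mathcal{F}^{\iota}=\mathcal{F}|_{H^{\imath}}$ is a codimension-one holomorphic foliation on $H^{\imath}$ which still admits infinitely many algebraic leaves (the $L_{\alpha}$). A Darboux--Jouanolou-type argument on $H^{\imath}$ then produces a rational first integral $R\colon H^{\imath}\dashrightarrow\mathbb{P}^{1}$, which after a M\"obius change of target we may assume lands in $\mathbb{C}$. For part (3), $R$ is constant along leaves of $\mathcal{L}$, so set $S=R(H_{reg})\subset\mathbb{C}$. Since $H_{reg}$ has real dimension $2n+1$ while each fibre $R^{-1}(c)\cap H_{reg}$ has real dimension $2n$, the image $S$ has real dimension one; combined with the real analyticity of $H$ and the algebraicity of $R$ and $H^{\imath}$, this forces $S$ to be a real algebraic curve, and the inclusion $H\subset\overline{R^{-1}(S)}$ is then automatic.

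The main obstacle I expect is part (1): upgrading the mere existence of infinitely many algebraic leaves of $\mathcal{L}$ to genuine algebraicity of $H^{\imath}$ as a projective variety. This is the step where the Segre variety technology, the closure argument for the family of leaves, and the dimension hypothesis $n>(N-1)/2$ must be combined with care, and any slackness there propagates throughout. Once this is secured, parts (2) and (3) reduce to Jouanolou's theorem on $H^{\imath}$ and a dimension count in the target $\mathbb{C}$.
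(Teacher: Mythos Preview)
Your plan for part~(1) has a genuine gap. Taking the Zariski closure of $\bigcup_{\alpha} L_{\alpha}$ certainly produces an algebraic variety $Z$ containing all the algebraic Levi leaves, but your ``rigidity'' argument does not establish that $Z$ has dimension exactly $n+1$, nor that $Z$ agrees with $H^{\imath}$ as a germ along $\overline{H_{reg}}$. The inequality $2(n+1)-N\geq 2$ tells you nothing about the Zariski closure in $\mathbb{P}^{N}$; it constrains intersections inside $H^{\imath}$, which is not yet known to be algebraic. The paper's route is quite different and much cleaner: it uses only \emph{one} algebraic Levi leaf $L$ and invokes Chow's extension theorem (Theorem~\ref{chow}). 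Since $L$ is algebraic of dimension $n$ and $H^{\imath}$ is an analytic variety of dimension $n+1>N-n$ (this is exactly where $n>(N-1)/2$ enters) defined in a neighborhood of $L$, Chow's theorem forces $H^{\imath}$ to extend algebraically. The infinitude of algebraic leaves is not needed for part~(1); it is used only in part~(2).

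Your sketch for part~(3) is also incomplete and borders on circular. You propose to set $S=R(H_{reg})$ and argue that algebraicity of $R$ and $H^{\imath}$ ``forces $S$ to be a real algebraic curve''. But $H$ is only known to be real analytic at this stage, so the image $R(H_{reg})$ is a priori merely subanalytic; you cannot invoke Tarski--Seidenberg on $H$ without already knowing it is semialgebraic, which is the conclusion. The paper instead works locally at an indeterminacy point $p\in\ind(R)$ (which lies in $H$ since all leaves pass through it), pushes $H$ forward by the map $z\mapsto(f(z),g(z))$ where $R=f/g$, shows via the Finite Map Theorem applied to the complexification $H^{\mathbb{C}}$ that the image is semianalytic and in fact homogeneous, hence algebraic, and then applies Tarski--Seidenberg to the projection $\xi z_{2}=z_{1}$. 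For part~(2), your Darboux--Jouanolou idea is correct in spirit, but note that $H^{\imath}$ is singular: the paper desingularizes via Hironaka, applies Ghys's theorem on the smooth model to get compactness of all leaves, and then uses G\'omez-Mont's theorem to produce the rational first integral on $H^{\imath}$.
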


For a real algebraic Levi-flat subset   $H\subset \mathbb{P}^N$, the $\imath$-complexification $H^{\imath}$ is algebraic.
If further $H$ is invariant by a global $n$ dimensional holomorphic foliation, then
the same elements  of the proof of Theorem \ref{int-racional-R}  give that assertions
\emph{(2)} and \emph{(3)} are also true in this case.

In the local point of view,  we   have the following     integrability result:

\begin{maintheorem}
\label{teo-bru-adapt}
 Let $\mathcal{F}$ be a germ of holomorphic foliation of dimension $n$  at $(\mathbb{C}^{N},0)$ tangent to a germ of real analytic Levi-flat subset  $H$ of Levi dimension $n$.  Suppose that $\sing(H^{\imath})$,   the singular set of the $\imath$-com\-plexi\-fi\-ca\-tion of $H$,  has codimension at least two.   Then $\mathcal{F}^{\imath}$ admits a meromorphic first integral.
\end{maintheorem}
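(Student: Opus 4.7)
The plan is to adapt Brunella's geometric proof \cite{brunella2011} of the Cerveau--Lins Neto local integrability theorem for codimension one foliations tangent to Levi-flat hypersurfaces to the higher-dimensional setting at hand. Observe that $\mathcal{F}^{\imath}$ is a codimension one holomorphic foliation on the $(n+1)$-dimensional complex variety $H^{\imath}$, so the strategy is to produce infinitely many $\mathcal{F}^{\imath}$-invariant analytic hypersurfaces of $H^{\imath}$ and then invoke a Darboux--Jouanolou type theorem to extract a meromorphic first integral.

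The first step is to show, using the Segre varieties introduced in Section \ref{segre-section}, that for each $p$ in a neighborhood of $0$ the Segre variety $\Sigma_{p}$ is an $\mathcal{F}^{\imath}$-invariant germ of analytic hypersurface of $H^{\imath}$. Indeed, for $p\in H_{reg}$ the variety $\Sigma_{p}$ contains the Levi leaf through $p$, which is a leaf of $\mathcal{F}$ by the tangency hypothesis; since $\Sigma_{p}$ is an irreducible complex hypersurface of $H^{\imath}$ containing an entire leaf of the $n$-dimensional foliation $\mathcal{F}^{\imath}$, the tangent space to $\Sigma_p$ at a regular point contains $T\mathcal{F}^{\imath}$, and analytic continuation forces $\Sigma_p$ to be $\mathcal{F}^{\imath}$-invariant.

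The second step is to verify that as $p$ ranges over $H_{reg}$ one obtains infinitely many distinct Segre hypersurfaces. The anti-Segre duality appearing in the structural results of Section \ref{segre-section} implies that the fibers of the map $p \mapsto \Sigma_{p}$ are contained in complex analytic varieties of dimension at most $n$, hence have real dimension at most $2n$, while $H_{reg}$ has real dimension $2n+1$; thus the family $\{\Sigma_{p}\}_{p\in H_{reg}}$ cannot reduce to finitely many varieties. Having at hand an infinite family of $\mathcal{F}^{\imath}$-invariant codimension one analytic subvarieties of the $(n+1)$-dimensional variety $H^{\imath}$, a Darboux--Jouanolou type theorem for codimension one foliations on complex analytic varieties then furnishes the desired non-constant meromorphic first integral for $\mathcal{F}^{\imath}$.

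The main obstacle lies in the final step: the classical Darboux--Jouanolou theorem is stated on smooth complex manifolds, whereas $H^{\imath}$ is possibly singular. The hypothesis that $\sing(H^{\imath})$ has codimension at least two is precisely what is needed to circumvent this: one works on a resolution $\pi \colon \widetilde{H^{\imath}} \to H^{\imath}$, pulls back both $\mathcal{F}^{\imath}$ and the invariant Segre hypersurfaces, applies classical Darboux--Jouanolou on the smooth $\widetilde{H^{\imath}}$, and then pushes the resulting meromorphic first integral down to $H^{\imath}$ via Levi's extension theorem, whose hypotheses are guaranteed by the codimension-two assumption on $\sing(H^{\imath})$. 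A subsidiary technical point to watch is that the Segre hypersurfaces must descend to genuine $\mathcal{F}^{\imath}$-invariant divisors on $H^{\imath}$ and not become absorbed into the exceptional locus of $\pi$; this is controlled by the fact that they meet $H_{reg}$ in positive real dimension.
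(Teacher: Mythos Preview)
Your strategy diverges from the paper's, and the final step contains a genuine gap. The paper does not pass through Darboux--Jouanolou: after reducing via the Transversality Lemma (Appendix) to the case $\diml H = 1$ with $H^{\imath}$ having an isolated singularity at $0$, it proves that the \emph{entire} mirror Segre variety $\Sigma_p^{*}=\pi_1^{-1}(p)\subset H^{\imath *}$ is $\mathcal{F}^{\imath *}$-invariant for every $p\in H^{\imath}\setminus\{0\}$, and that $\Sigma_p^{*}=\Sigma_q^{*}$ whenever $p,q$ lie on the same leaf of $\mathcal{F}^{\imath}$. The key device is the product foliation $\mathcal{F}^{\imath}\times\mathcal{F}^{\imath *}$ on $H^{\imath}\times H^{\imath *}$: since $H_{\Delta}$ is tangent to it and $H^{\mathbb{C}}$ is the minimal complex variety containing $H_{\Delta}$, one gets $H^{\mathbb{C}}=\tang(\mathcal{F}^{\imath}\times\mathcal{F}^{\imath *},H^{\mathbb{C}})$, so each fiber $\pi_1^{-1}(p)$ is a finite union of $\mathcal{F}^{\imath *}$-leaves. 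This exhibits the Segre varieties as the fibers of a \emph{holomorphic} map constant on $\mathcal{F}^{\imath}$-leaves, and the first integral is then built directly following Propositions~2 and~4 of \cite{brunella2011}; Levi's extension theorem enters only to push the resulting meromorphic function across the codimension-two set $\sing(H^{\imath})$.

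The gap in your argument is the appeal to ``a Darboux--Jouanolou type theorem''. In the local setting the relevant statement requires infinitely many germs of invariant hypersurfaces \emph{through the origin}, and your family $\{\overline{L_p}\}_{p\in H_{reg}}$ does not provide this unless $0$ is Segre degenerate (dicritical); in the Segre-ordinary case only the single germ $\overline{L_0}$ passes through $0$. What you have produced is merely a \emph{real} one-parameter family of closed leaves of $\mathcal{F}^{\imath}$, and having infinitely many closed leaves in an open set is by itself vacuous---any codimension one foliation on a polydisc has a continuum of them. The global version (Ghys' theorem, \cite{ghys2000}) is unavailable because the resolution $\widetilde{H^{\imath}}$ of a germ is not compact. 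A secondary imprecision: you assert that $\Sigma_p$ is irreducible, which is neither proved in Section~\ref{segre-section} nor true in general; your containment argument only shows that the component of $\Sigma_p$ through $p$ coincides with $\overline{L_p}$, whereas the paper's tangency argument on $H^{\mathbb{C}}$ yields the stronger conclusion that \emph{every} component of $\Sigma_p^{*}$ is a leaf closure of $\mathcal{F}^{\imath *}$.
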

The proof of this theorem, in Section \ref{section-brunella}, relies on   the    integration techniques of   Brunella's
 geometric proof   for Cerveau-Lins Neto's  local integrability theorem    \cite{brunella2011}.
Lastly,  we illustrate our main results with some examples in Section \ref{section-examples}.

This article is
a  partial compilation of the results of the Ph.D. thesis of the first author \cite{bretas2016},
written under the supervision of the second and third authors.
They all express their gratitude to
 R. Rosas and B. Sc\'ardua for    suggestions in the development of this work.

\section{Mirroring and complexification}
\label{preliminaries}

Consider   coordinates $z = (z_{1},\ldots,z_{N})$ in $\mathbb{C}^{N}$, where $z_{j} = x_{j} + i y_{j}$,
and the complex conjugation $\bar{z} = (\bar{z}_{1},\ldots,\bar{z}_{N})$, where $\bar{z}_{j} = x_{j} - i y_{j}$.
We will employ the standard multi-index  notation. For instance,   if $\mu = (\mu_{1},\ldots\mu_{N}) \in \Z_{\geq 0}^{N}$ then
$z^{\mu} = (z_{1}^{\mu_{1}},\ldots,z_{N}^{\mu_{N}})$.  We also fix the following notation
for rings of germs at   $(\C^{N},0)$:
\begin{itemize}
\item $\mathcal{O}_{N} =  \C\{z_{1},\ldots,z_{N}\}$ is the ring of germ of complex analytic functions;
\item $\A_{N} = \C\{z_{1},\ldots,z_{N},\bar{z}_{1},\ldots,\bar{z}_{N}\} = \C\{x_{1},y_{1},\ldots,x_{N},y_{N}\}$ is the ring of germs of real analytic functions with complex values;
\item $\A_{N \R} \subset \A_{N}$ is the ring of germs of real analytic functions with real values.
\end{itemize}
A  germ of function  $\phi(z)=\sum_{\mu,\nu} a_{\mu\nu} z^{\mu}\bar{z}^{\nu}$ in $\A_{N}$ is in $\A_{N \R}$
if and only if $\phi(z) = \xbar{\phi(z)}$ for all $z$, which is equivalent to $a_{\mu\nu} =
\bar{a}_{\nu \mu}$ for all $\mu, \nu$.

Let $\mathbb{C}^{N*}$ be the  space with the opposite complex structure of  $\mathbb{C}^{N}$, having complex coordinates $w = (w_{1},\ldots,w_{N}) = \bar{z}$.   The conjugation map $\Gamma: z=x+iy \mapsto x-iy = w$ defines a biholomorphism between $\mathbb{C}^{N}$ and $\mathbb{C}^{N*}$.
This correspondence is referred to as \textit{mirroring}. In general, given a subset $A \subset \C^{N}$,
its  \emph{mirror} is the set
$$A^{*} = \Gamma(A) = \{\bar{z} ;z\in A\}\subset \mathbb{C}^{N*}.$$
Given a complex function $\phi$ in $A \subset \mathbb{C}^{N}$, its mirror     $\phi^{*}$ is the  function
in $A^{*} \subset \mathbb{C}^{N*}$ given by
 $$\phi^{*}(w) =  \xxbar{\phi(\xbar{w})}.$$
 For instance, if $\phi(z)=\sum_{\mu}a_{\mu}z^{\mu}$ is
complex analytic, then its mirror
\[ \phi^{*}(w) = \displaystyle \xxbar{\sum_{\mu}a_{\mu}\bar{w}^{\mu}} = \sum_{\mu}\bar{a}_{\mu}w^{\mu} \]
is complex analytic. In the same way, if
 $\phi \in \A_{N \R}$  has a  development in power series
$\phi(z)=\sum_{\mu,\nu}a_{\mu\nu}z^{\mu}\bar{z}^{\nu},$ where $z \in  \mathbb{C}^N$, then
its mirror function $\phi^{*} \in \A_{N \R}$ has a power series  expansion
\begin{equation}
\label{mirror-function}
\phi^{*}(w)= \xxbar{ \sum_{\mu,\nu}a_{\mu\nu}\bar{w}^{\mu}w^{\nu}} =  \sum_{\mu,\nu}\bar{a}_{\mu\nu}w^{\mu}\bar{w}^{\nu} =
\sum_{\mu,\nu} a_{\nu\mu}w^{\mu}\bar{w}^{\nu},
\end{equation}
 where $w \in    \mathbb{C}^{N*}$.
It follows from this discussion that, if $A \subset \mathbb{C}^{N}$ is  a (real or complex) analytic subset, so is its mirror $A^{*} \subset \mathbb{C}^{N*}$.

This  mirroring procedure can  be applied to other geometric objects.
For example, to an    analytic $p-$form
$\eta=\sum_{I}\alpha_I(z)dz_I$,
where $I=(i_1,\ldots,i_p)$ and $dz_I=dz_{i_1}\wedge \cdots \wedge dz_{i_p}$, we  associate the  $p-$form $\eta^*=\sum_{I}\alpha_I^{*}(w)dw_I$.
A germ of holomorphic foliation $\mathcal{F}$ of codimension  $p$  at $(\mathbb{C}^{N},0)$,
defined by a $p-$form $\eta$ --- that is integrable and locally decomposable
  outside the singular set --- engenders its mirror $\mathcal{F}^{*}$, which is the foliation
   of codimension $p$ defined
 by  $\eta^*$ whose leaves are
   the mirroring of  those of $\mathcal{F}$ (see the Appendix for the definition of holomorphic foliation).

We consider $\mathbb{C}^{N} \times \mathbb{C}^{N*} \simeq \mathbb{C}^{2N} $ with
coordinates $(z,w)$,
  the embedding
\begin{eqnarray}\nonumber
i:\mathbb{C}^{N}&\rightarrow& \mathbb{C}^{N} \times \mathbb{C}^{N*} \nonumber \\
z&\mapsto&(z,\bar{z})\nonumber
\end{eqnarray}
and the diagonal subset
$$\Delta:=i(\mathbb{C}^{N})=\{(z,w)\in \mathbb{C}^{N}\times \mathbb{C}^{N*}; w=\bar{z}\}.$$
 Given a germ of analytic function $\phi \in \A_{N \R}$
  we say that a connected
neighborhood $U$ of $0 \in \mathbb{C}^{N}$ is  \emph{reflexive} for $\phi$ or $\phi$-\emph{reflexive}   if
 $\phi(z,w)$ converges in $U\times U^{*} \subset \mathbb{C}^{N}\times \mathbb{C}^{N*}$.
 For a germ of map $\bs{\phi}=(\phi_1,...,\phi_k) \in  (\A_{N \R})^{k}$, a $\bs{\phi}$-\emph{reflexive}
 neighborhood is one that is $\phi_{j}$-reflexive for every $j=1,...,k$.

Let $\phi \in \A_{N \R}$ be a real function with  development in power series
$\phi(z)=\sum_{\mu,\nu}a_{\mu\nu}z^{\mu}\bar{z}^{\nu}$. The \emph{complexification} of $\phi$ is the germ  of complex function
$ \phi^{ \C}  \in
\cl{O}_{2N}$ defined at the origin
$0  \in \mathbb{C}^{N}\times \mathbb{C}^{N*}$ by the series
\begin{equation}
\label{complexification-function}
\psi^{\C}(z,w)=\sum_{\mu,\nu}a_{\mu\nu}z^{\mu} w^{\nu}.
\end{equation}
  If $U \subset \mathbb{C}^{N}$ is a
$\phi$-reflexive neighborhood, then this series
converges in $U \times U^{*}$.
The complexification of   a germ of map $\bs{\phi}=(\phi_1,...,\phi_k) \in  (\A_{N \R})^{k}$
is the germ of complex map $\bs{\phi}^{\C} \in (\cl{O}_{2N})^{k}$ defined by $\bs{\phi}^{\C}=(\phi_1^{\C},...,\phi_k^{\C})$.

Let $H$ be a germ of real analytic variety at $(\mathbb{C}^{N},0)$. As before, we denote by
$H_{reg}$ its regular part. The singular part of $H$, denoted by $H_{sing}$,  consists of the points
in $H \setminus \overline{H_{reg}}$.
Let  $\mathcal{I}(H)$ denote the
ideal of $H$ in $\A_{N \R}$. Since  $\A_{N \R}$
is Noetherian, we can take a system of   generators $\phi_1,...,\phi_k$ of $\mathcal{I}(H)$ and associate a   map $\bs{\phi}=(\phi_1,...,\phi_k) \in  (\A_{N \R})^{k}$ that is called  \emph{generating map} of $H$.
We have the definition:
\begin{ddef}\label{complexificacao}
\emph{The \emph{extrinsic complexification} or simply \emph{complexification} $H^\mathbb{C}$ of $H$ is the germ of complex analytic variety at $(\mathbb{C}^{N}\times \mathbb{C}^{N*},0)$ defined by the equation
$ \bs{\phi}^{\C}(z,w)=0$.}
\end{ddef}
If $U$ is $\bs{\phi}$-\emph{reflexive}
neighborhood, then $H^\mathbb{C}$ is realized as
$$H^\mathbb{C} = \{(z,w)\in U \times U^{*}; \bs{\phi}(z,w)=0\}.$$
The set $H^{\mathbb{C}}$ is the smallest germ of complex analytic subset at $(\mathbb{C}^{N}\times \mathbb{C}^{N*},0)$ containing  $H_{\Delta} := i(H) = H^{\mathbb{C}}\cap \Delta$.
It is evident from the definition that the complexification respects inclusions:
if $H_{1} \subset H_{2}$ are   germs of real analytic varieties   then $H_{1}^{\C} \subset  H_{2}^{\C}$.
This notion of complexification, introduced by H. Cartan in \cite{cartan1957},   has the following properties:

\begin{enumerate}[(i)]
\item  $H^{\mathbb{C}}\supset H_{\Delta};$
\item  every germ of holomorphic function   vanishing over $H_{\Delta}$ also vanishes over $H^{\mathbb{C}};$
\item   the irreducible components of the real analytic variety $H$ are in correspondence, by complexification, to the irreducible components of the  complex analytic variety $H^{\mathbb{C}}$. In particular, $H$ is   irreducible if and only if $H^{\mathbb{C}}$ is irreducible.
\end{enumerate}

Let us examine the effect of the complexification procedure on  complex varieties.
Take $X \subset (\mathbb{C}^{N},0)$   a germ of complex analytic variety
whose ideal in $\cl{O}_{N}$ is generated by $f_{1}, \ldots, f_{k}$. Seen as a real analytic
variety, the corresponding generators of  the ideal of $X$ in $\A_{N \R}$ are
$\phi_{j} = \re(f_{j}) = (f_{j} + \bar{f}_{j})/2$ and $\psi_{j} = \im(f_{j}) = (f_{j} - \bar{f}_{j})/2 i$,
for $j=1,\ldots,k$. Thus, the complexification $X^{\C}$ in $(\mathbb{C}^{N}\times \mathbb{C}^{N*},0)$
is the complex analytic variety defined by the system of equations
\[\phi_{j}(z,w) =  \frac{f_{j}(z) + \bar{f}_{j}(\bar{w})}{2} = \frac{f_{j}(z) + {f}_{j}^{*}(w)}{2} = 0 \]
and
\[\psi_{j}(z,w) =  \frac{f_{j}(z) - \bar{f}_{j}(\bar{w})}{2i} = \frac{f_{j}(z) - {f}_{j}^{*}(w)}{2i} = 0,\]
 for $j=1,\ldots,k$, which is equivalent to
\[ f_{j}(z) = 0 \ \ \  \text{and} \ \ \ {f}_{j}^{*}(w) =0 \ \ \text{for} \ \ j=1,\ldots,k.\]
We therefore   conclude that $X^{\C} = X \times X^{*}$. In particular, we have that
$\dim_{\C} X^{\C}  = 2 \dim_{\C} X$.



\section{Levi-flat subsets, local aspects}
\label{levi-section}


Essentially,   real analytic Levi-flat subsets are real analytic subsets  of odd real dimension $2n+1$
  foliated by complex varieties of complex  dimension $n$. When the   real codimension is   one, we are in the case
of  Levi-flat hypersurfaces.
We give the precise definition:

\begin{ddef} {\rm \label{sclf}
Let $H\subset M$ be a  real analytic     subset of real dimension $2n+1$, where
 $M$ is an $N$-dimensional complex manifold, $N \geq 2$ and $1\leq n\leq N-1.$ We say that $H$ is a \emph{Levi-flat} subset if the distribution of tangent spaces
\begin{eqnarray}\nonumber
\mathcal{L}: H_{reg}\subset \mathbb{C}^N&\rightarrow& T\mathbb{C}^{N}\simeq\mathbb{C}^{N}\\ \nonumber
p&\mapsto&T_pH_{reg}\cap J(T_pH_{reg}) \nonumber
\end{eqnarray}
has dimension $n$ and is integrable in the sense of Frobenius.
}\end{ddef}
The  regular part of
  $H$ is a CR-variety, of CR-dimension $n+1$,
carrying  an $n-$dimensional
foliation with complex leaves.
We use the qualifier ``\emph{Levi}'' for the foliation, its leaves and its dimension.
The foliation itself is also   denoted by  $\cl{L}$, its dimension is called $\cl{L}$-dimension and denoted  by $\dim \cl{L}$ or $\diml H$.
 The leaf
through by $p \in H_{reg}$ is denoted  by $L_p$.
Also, we say that $N$ a the   \emph{ambient dimension} of  $H$.
Most of the time we are concerned with local properties of Levi-flat subsets. In this     case,
  an open set $U \subset \C^{N}$ plays the role of $M$ in the definition. The notion of
 Levi-flat subset germifies and,  in general, we do not distinguish a germ 
at $(\C^{N},0)$ from its realization in some neighborhood $U$ of $0 \in \C^{N}$.

A trivial   model for a   Levi-flat subset of $\cl{L}$-dimension $n$ in $\C^{N}$ is provided by
\begin{equation}
\label{levi-local-form}
H=\{z = (z',z'') \in \C^{n+1} \times \C^{N-n-1};\  \im(z_{n+1})=0,z''=0\},
\end{equation}
where $z'=(z_1, ... ,z_{n+1})$ and $z''=(z_{n+2},...,z_N)$. The Levi foliation is given by
$$\{z = (z',z'') \in \C^{n+1} \times \C^{N-n-1};\  z_{n+1}=c , z''=0,  \ \text{with} \ c \in \mathbb{R}\}.$$
This trivial model is in fact a local form for Levi-flat subsets.  This was mentioned in \cite{brunella2007} without an
explicit proof, which we give   for the sake of completeness:

\begin{prop}\label{est.loc.slf}
Let $H$ be a Levi-flat subset of $\cl{L}$-dimension $n$ and ambient dimension $N$.   Then, at each $p\in H_{reg}$, there are local holomorphic coordinates $(z',z'') \in \C^{n+1} \times \C^{N-n-1}$  such that $H$ has the local form
\eqref{levi-local-form}.
\end{prop}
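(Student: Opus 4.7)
The plan is to reduce the statement to E.~Cartan's classical normal form \eqref{formalocal-hlf} for Levi-flat hypersurfaces. The strategy is to exhibit, in a neighborhood of $p\in H_{reg}$, a germ of smooth complex submanifold $Y\subset\mathbb{C}^{N}$ of complex dimension $n+1$ containing $H_{reg}$. Once $Y$ is available, $H_{reg}\subset Y$ is a real hypersurface whose codimension-one foliation $\mathcal{L}$ is integrable, i.e.\ a classical real analytic Levi-flat hypersurface inside the complex manifold $Y$. Cartan's theorem then supplies holomorphic coordinates $(z_{1},\ldots,z_{n+1})$ on $Y$ near $p$ in which $H_{reg}=\{\im(z_{n+1})=0\}$, and extending these to a holomorphic chart $(z_{1},\ldots,z_{N})$ of $\mathbb{C}^{N}$ at $p$ with $Y=\{z_{n+2}=\cdots=z_{N}=0\}$ yields the required local form \eqref{levi-local-form}.

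To construct $Y$, I first choose holomorphic coordinates around $p=0$ so that the Levi leaf $L_{p}$ is the $n$-plane $\{z_{n+1}=\cdots=z_{N}=0\}$ and the real line $T_{p}H_{reg}/\mathcal{L}_{p}$ is spanned by $\partial/\partial y_{n+1}$; such a linear reduction is possible because $\mathcal{L}_{p}$ is the maximal complex subspace of $T_{p}H_{reg}$. I pick a short real analytic arc $\gamma\subset H_{reg}$ through $p$ transverse to $L_{p}$ with $\gamma'(0)=\partial/\partial y_{n+1}$. Each nearby Levi leaf $L_{\gamma(s)}$, being a complex $n$-dimensional submanifold close to $L_{p}$, admits by the implicit function theorem a graph representation
\[
L_{\gamma(s)}=\{z_{j}=f_{j}(z_{1},\ldots,z_{n};s):\ j>n\},
\]
where $f_{j}(\,\cdot\,;s)$ is holomorphic and $f_{j}$ is jointly real analytic in $s$ because the Levi foliation on $H_{reg}$ is real analytic. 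Since the Taylor coefficients of $f_{j}$ in $s$ are themselves holomorphic in $(z_{1},\ldots,z_{n})$, I replace the real parameter $s$ by a complex parameter $\sigma$ and obtain holomorphic functions $F_{j}(z_{1},\ldots,z_{n};\sigma)$. Transversality of $\gamma$ to $L_{p}$ makes the map $(z_{1},\ldots,z_{n},\sigma)\mapsto(z_{1},\ldots,z_{n},F_{n+1},\ldots,F_{N})$ an immersion at the origin, so its image $Y$ is a smooth complex $(n+1)$-dimensional submanifold; by construction $Y\supset H_{reg}$, with the slice $\sigma\in\mathbb{R}$ tracing out exactly $H_{reg}$.

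With $Y$ at hand the finish is routine: the tuple $(z_{1},\ldots,z_{n},\sigma)$ already is a system of holomorphic coordinates on $Y$ displaying $H_{reg}=\{\im(\sigma)=0\}$, so I set $z_{n+1}:=\sigma$ and extend to a chart of $\mathbb{C}^{N}$ near $p$ by appending $N-n-1$ holomorphic functions $z_{n+2},\ldots,z_{N}$ whose common zero set is $Y$, which exist because $Y$ is a smooth complex submanifold. The main obstacle, and the only nontrivial content of the proof, is the complexification step: one must know that the family of nearby leaves depends real-analytically (not merely smoothly) on the transverse parameter $s$, which requires using the real analyticity of $H$ and of its Levi foliation rather than a purely $C^{\infty}$ Frobenius argument, and one must verify that the resulting parametrization $(z_{1},\ldots,z_{n},\sigma)\mapsto Y$ is an immersion---ultimately a consequence of the fact that $\mathcal{L}_{p}$ is the maximal $J$-invariant subspace of $T_{p}H_{reg}$, so that $J\gamma'(0)\notin T_{p}H_{reg}$.
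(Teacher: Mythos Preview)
Your argument is correct, and it takes a genuinely different route from the paper's. The paper invokes a black-box result from CR geometry (namely \cite[Cor.~1.8.10]{baouendi1999}) stating that any real analytic CR submanifold sits, after a holomorphic change of coordinates, as a generic submanifold of a complex coordinate subspace $\{t''=0\}\cong\C^{k}$; a dimension count then forces $k=n+1$, and Cartan's normal form \eqref{formalocal-hlf} is applied inside $\{t''=0\}$. You instead construct the complex $(n+1)$-manifold $Y$ by hand, complexifying the real-analytic transverse parameter of the Levi foliation. This is more elementary in that it avoids the CR-theory reference, and it has the pleasant feature that the coordinates $(z_{1},\ldots,z_{n},\sigma)$ you build already display $H_{reg}=\{\im(\sigma)=0\}$, so Cartan's theorem is not really needed as a separate input---you have effectively reproved it in this setting. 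The paper's approach is shorter and highlights the CR-geometric context; yours is self-contained and foreshadows the construction of the $\imath$-complexification that appears later.
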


\begin{proof}
Since $H_{reg}$ is a CR-subvariety, for some $k$ with $2 \leq k \leq N$, there are  local holomorphic coordinates
 $t=(t',t'')\in \mathbb{C}^{k} \times \mathbb{C}^{N-k}$ at $p$ such that $H_{reg}\subset \{t'' =0\}\cong \mathbb{C}^{k}$ is a generic subvariety, that is, $H_{reg}$ is defined by $d$ real functions in $\mathbb{C}^{k}$ whose  complex differentials are $\mathbb{C}$-linearly independent \cite[Cor. 1.8.10]{baouendi1999}.
  This gives
\[ dim_{\mathbb{R}}H_{reg}+d=2k \quad \text{and} \quad dim_\mathbb{C}T^{(1,0)}H_{reg}+d=k.\]
Combining these equations, we obtain
 $$k=dim_\mathbb{R}H_{reg}-dim_\mathbb{C}T^{(1,0)}H_{reg}= (2n +1) - n =n+1.$$
We found that $H_{reg}$ is as a real analytic Levi-flat hypersurface in the complex variety $\{t''=0\}$. It then suffices  to apply E. Cartan's normal form \eqref{formalocal-hlf}  to the coordinates $t'$
in order to get the  coordinates $z'$ and take $z'' = t''$.
\end{proof}

In  the local form \eqref{levi-local-form}, $\{z'' = 0\}$ corresponds to the unique local  $(n+1)-$dimensional complex subvariety of the ambient space containing the germ of $H_{reg}$ at $p$.  These local subvarieties glue together forming a complex variety defined in a whole neighborhood of $H_{reg}$.  It is analytically extendable  to a neighborhood of
$\ov{H_{reg}}$ by the following theorem:

\begin{teo}[Brunella \cite{brunella2007}]
\label{Levi-viz-Hi}
Let $M$ be an $N-$dimensional complex manifold and $H \subset M$ be a  real analytic Levi-flat subset of $\LL$-dimension $n$.  Then,   there exists  a neighborhood $ V \subset M$ of $\overline{H_{reg}}$   and  a unique complex    variety $X \subset V$ of dimension $n+1$ containing $H$.
\end{teo}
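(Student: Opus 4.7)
The plan is to build $X$ first as a smooth $(n+1)$-dimensional complex submanifold defined in a neighborhood of $H_{reg}$ by pasting the local normal forms, and then to extend analytically across the small set $\overline{H_{reg}}\setminus H_{reg}$.

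For each $p\in H_{reg}$, Proposition \ref{est.loc.slf} furnishes a neighborhood $U_p\subset M$ and holomorphic coordinates $(z',z'')\in\C^{n+1}\times\C^{N-n-1}$ in which $H\cap U_p=\{\im(z_{n+1})=0,\ z''=0\}$. The obvious candidate is $X_p:=\{z''=0\}\cap U_p$, a complex submanifold of dimension $n+1$ containing $H\cap U_p$. Moreover $X_p$ is the only germ of $(n+1)$-dimensional complex analytic set containing $H$ at $p$: any such set contains the complex Zariski closure of $H\cap U_p$ --- which in these coordinates is exactly $\{z''=0\}$ --- and the dimension hypothesis forces equality. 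This local uniqueness is what makes the construction canonical: for any $p,q\in H_{reg}$ with $U_p\cap U_q\neq\emptyset$, the germs $X_p$ and $X_q$ must agree on overlaps. Hence the $X_p$ glue into an $(n+1)$-dimensional complex submanifold $X_0\subset V_0$, where $V_0:=\bigcup_{p\in H_{reg}} U_p$ is an open neighborhood of $H_{reg}$ in $M$.

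The hard part is the extension of $X_0$ across the residual set $E:=\overline{H_{reg}}\setminus H_{reg}$. This set is contained in $H_{sing}$, hence in a real analytic subset of $M$ of real dimension at most $2n$; by contrast $X_0$ has real dimension $2n+2$. In this situation a Bishop--El Mir type extension theorem for complex analytic sets applies, provided one controls the local $(2n+2)$-Hausdorff measure of $X_0$ in a punctured neighborhood of each $q\in E$. To obtain this volume bound, the idea is to work in a reflexive neighborhood of $q$ for a generating map $\bs{\phi}\in(\A_{N\R})^{k}$ of $H$: the local pieces $X_p$ are ``pinned'' to $H$ along $H_{reg}\cap U_p$ because they coincide there with the complexification of the generic CR-subvariety $H_{reg}$, so they cannot escape a bounded region as $p\to q$. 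This yields the required locally finite mass and the extension theorem then produces a closed analytic subset $X$ in a neighborhood $V$ of $\overline{H_{reg}}$ with $X\cap V_0=X_0$; purity of dimension is preserved because $E$ is nowhere dense in $X$.

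Uniqueness of $X$ on $V$ is then a short consequence: if $X'\subset V'$ is another $(n+1)$-dimensional complex variety containing $H$, then by the local uniqueness of Step 1 we have $X'=X_0=X$ on a neighborhood of $H_{reg}$, and two pure-dimensional complex analytic subsets of a common neighborhood that coincide on a dense open subset must coincide everywhere. The technical crux of the argument is thus Step 3, the volume estimate that legitimizes the Bishop--El Mir extension; the rest is formal.
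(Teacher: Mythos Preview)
The paper does not itself prove this theorem; it is quoted from Brunella \cite{brunella2007}. From Proposition~\ref{lema-hip alg}, which the paper explicitly flags as having ``appeared in the proof of Theorem~\ref{Levi-viz-Hi}'', one can infer Brunella's strategy: for each $p\in\overline{H_{reg}}$ the extrinsic complexification $H^{\C}$ is a genuine complex analytic germ at $(p,\bar p)\in\C^{N}\times\C^{N*}$, and the germ $H^{\imath}_p$ is produced as the image $\pi_1(H^{\C}_{(p,\bar p)})$ under the first projection. The decisive feature is that $H^{\C}$ is already analytic at $(p,\bar p)$ for \emph{every} $p\in\overline{H_{reg}}$, singular points included, so no extension across $\overline{H_{reg}}\setminus H_{reg}$ is ever needed --- the variety is built directly at those points.

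Your route is genuinely different and, as written, incomplete. The local construction and the gluing over $H_{reg}$ are fine. The gap is in the extension step. A Bishop--El~Mir theorem requires a pure-dimensional analytic set that is \emph{closed} in a set of the form $W\setminus E$ for some neighborhood $W$ of $q\in E$; but your $X_0$ lives only in $V_0=\bigcup_{p\in H_{reg}}U_p$, and there is no reason for $V_0$ to contain a punctured neighborhood $W\setminus E$ of a given $q$ --- the charts $U_p$ may shrink as $p\to q$ and miss most of $W$. So before any mass estimate becomes relevant you must first exhibit $X_0$ as a closed subset of the correct domain, and this is not addressed. The ``pinning'' paragraph is a placeholder rather than an argument: the assertion that the local pieces cannot escape is exactly what has to be proved. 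The natural way to make it precise is to pass to the complexification $H^{\C}$ in a reflexive neighborhood of $q$ --- but once you do that, you are essentially carrying out Brunella's construction, and the Bishop extension becomes superfluous.
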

The variety $X$ is the realization in the neighborhood $V$   of a germ of complex analytic
variety around $H$.
We  denote it --- or its germ --- by $H^{\imath}$ and call it  \emph{intrinsic complexification} or  \emph{$\imath$-complexification} of $H.$ It plays a central role in the theory of Levi-flat subsets
we develop. The notion of intrinsic complexification also appears in \cite{pinchuk2016} with the name of \emph{Segre envelope}.

In this article we are mostly interested  in real analytic Levi-flat subsets which are invariant by  holomorphic foliations in the ambient space. A real analytic Levi-flat subset $H \subset M$ is
 \emph{invariant} by an $n-$dimensional  singular holomorphic foliation  $\mathcal{F}$   on $M$ if
 the Levi leaves are  leaves of $\mathcal{F}$. We also say   that $\mathcal{F}$ is \emph{tangent} to $H$.
If $H$ is invariant by a foliation, the same holds for its $\imath$-complexification:

\begin{prop}\label{fol-inv}
Let $H \subset M$ be a  real analytic Levi-flat subset of $\LL$-dimension $n$, where
 $M$ is a complex manifold of dimension $N$. If $H$ is
  invariant by an $n$-dimensional  holomorphic foliation $\mathcal{F}$ on $M$, then its $\imath$-complexification $H^\imath$ is also invariant by $\mathcal{F}$.
\end{prop}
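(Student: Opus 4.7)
My plan is to prove tangency of $\F$ to $H^\imath$ first pointwise on $H_{reg}$ via the local normal form, then extend the tangency to all of $H^\imath$ by a codimension argument.

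I would begin by fixing a point $p \in H_{reg}$ and invoking Proposition \ref{est.loc.slf} to choose holomorphic coordinates $(z',z'') \in \C^{n+1}\times\C^{N-n-1}$ near $p$ in which $H$ takes the form \eqref{levi-local-form}. In these coordinates the germ $(H^{\imath},p)$ coincides with the non-singular complex submanifold $\{z'' = 0\}$, which is the unique germ of $(n+1)$-dimensional complex variety containing $H$ near $p$. The Levi leaf $L_p$ is $\{z_{n+1}=c,\; z''=0\}$ with $c = z_{n+1}(p)\in\R$, a complex submanifold of dimension $n$. Since $\F$ is tangent to $H$, the leaf of $\F$ through $p$ (a complex $n$-dimensional manifold) contains $L_p$; as both have complex dimension $n$, they coincide as germs at $p$. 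In particular the germ of this leaf lies inside $\{z''=0\}=H^\imath$, and therefore $T_p\F \subset T_p H^\imath$ at every $p\in H_{reg}\setminus\sing(\F)$.

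Next I would globalize this tangency. Consider the set
\[
T \;=\; \{\, q\in H^{\imath}_{reg}\setminus \sing(\F)\;:\; T_q\F \subset T_q H^{\imath}\,\}.
\]
Because $H^\imath$ is smooth of pure codimension $N-n-1$ at each of its regular points, the tangency condition amounts to the vanishing of the restriction of a set of defining holomorphic $1$-forms of $H^\imath$ to the tangent distribution of $\F$; hence $T$ is a closed complex analytic subset of $H^{\imath}_{reg}\setminus \sing(\F)$. By the local argument above, $T$ contains $H_{reg}\setminus\sing(\F)$, a real-analytic set of real dimension $2n+1$.

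Now I use the correspondence between components of $H$ and of $H^\imath$ (analogous to property (iii) of the extrinsic complexification, and built into the construction of $H^\imath$ via Theorem \ref{Levi-viz-Hi}): every irreducible component $X$ of $H^\imath$ meets $H_{reg}$ in a real-analytic set of real dimension $2n+1$, which has real codimension one in the complex manifold $X_{reg}$ of real dimension $2n+2$. If $T\cap X_{reg}$ were a proper complex analytic subset of $X_{reg}$, it would have real codimension at least two and therefore could not contain a set of real codimension one. Consequently $T\cap X_{reg}=X_{reg}$ for every component $X$, so $\F$ is tangent to $H^{\imath}_{reg}$ off $\sing(\F)$. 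This tangency then extends across $\sing(H^\imath)$ and $\sing(\F)$ by Hartogs/Remmert extension since these are complex analytic of codimension at least two in the relevant ambient.

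The main obstacle is the global step: producing the $(2n+1)$-real-dimensional piece of $H_{reg}$ inside each irreducible component of $H^\imath$. Once that is in place, the codimension comparison (real codimension one versus real codimension at least two) closes the argument quickly; the pointwise tangency in the local normal form is then essentially routine.
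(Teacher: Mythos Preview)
Your argument is correct, but it takes a longer route than the paper's. Both proofs start from the local normal form \eqref{levi-local-form}, but the paper extracts more from it: instead of only concluding pointwise tangency $T_p\F\subset T_pH^\imath$ at $p\in H_{reg}$ and then running a global codimension argument, the paper applies the identity theorem directly to the components of a local vector field $\vec v=(v_1,\ldots,v_N)$ tangent to $\F$. Since each Levi leaf $\{z_{n+1}=c,\;z''=0\}$ with $c\in\R$ is $\F$-invariant, one has $v_i(\zeta,c,0,\ldots,0)=0$ for $i>n+1$, every $\zeta\in\C^n$ and every real $c$; holomorphy in $z_{n+1}$ then forces $v_i(\zeta,z_{n+1},0,\ldots,0)\equiv 0$ for all complex $z_{n+1}$. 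This already shows that $\vec v$ is tangent to $H^\imath=\{z''=0\}$ on the entire local chart, not merely at the base point.

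The payoff is that the paper's local step immediately yields invariance of $H^\imath$ on an open set of $H^\imath$ around every point of $H_{reg}$, so your ``main obstacle'' --- ensuring that each irreducible component of $H^\imath$ carries a $(2n+1)$-dimensional piece of $H_{reg}$ --- never arises. Your tangency-locus argument is a perfectly valid alternative (and the component claim does follow from the construction of $H^\imath$ in Theorem~\ref{Levi-viz-Hi}), but it is doing by global analytic-set reasoning what the paper accomplishes in one line with the identity principle applied to the coordinates of $\vec v$.
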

\begin{proof}
We have $\mathcal{F}|_{H_{reg}}=\mathcal{L},$ where $\mathcal{L}$ is the Levi foliation. The problem is local, so we can work in  a local trivialization    \eqref{levi-local-form}, in which
the $\imath$-complexification is defined by $z'' = 0$
and the   Levi leaves are given by
 $\{ z_{n+1}=c, z''=0\}$, where $c \in \mathbb{R}$.
 Let $\vec{v}=(v_1,...,v_{n+2},...,v_N)$ be a  local vector field  tangent to $\mathcal{F}$. For each $i=n+2,..., N$, every $\zeta=(z_1,...,z_{n})\in \mathbb{C}^{n}$ and $z_{n+1}\in \mathbb{C}$ sufficiently small, it holds
$$v_i(\zeta, Re(z_{n+1}),0,...,0)\equiv0,$$
and thus
$$v_i(\zeta,z_{n+1},0,...,0) \equiv 0.$$
This says that   $H^{\imath}$ is invariant by $\vec{v}$.
\end{proof}
When $H$ is invariant by the foliation $\F$, we denote by $\mathcal{F}^{\imath}=\mathcal{F}|_{H^{\imath}}$ the restriction of $\mathcal{F}$ to $H^{\imath}.$ Note that $\mathcal{F}^{\imath}$ has codimension one in $H^{\imath}.$

\begin{prop}\label{Hc-em-V-V}
 Let $H$ be a germ of real analytic Levi-flat subset.  Then $H^{\mathbb{C}}$ is a subset of $H^{\imath} \times H^{\imath*}$ of complex codimension one.
\end{prop}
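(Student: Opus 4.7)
The statement splits into two parts: the inclusion $H^{\mathbb{C}}\subset H^{\imath}\times H^{\imath *}$ and the codimension claim. The plan is to handle them in turn, using functoriality of Cartan's complexification for the inclusion and a direct local dimension count at a regular point of $H$ for the codimension.

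For the inclusion, the observation is that $H\subset H^{\imath}$ by Theorem \ref{Levi-viz-Hi}, and Cartan's complexification is monotone with respect to inclusion of real analytic subsets (property noted immediately after Definition \ref{complexificacao}). Hence $H^{\mathbb{C}}\subset (H^{\imath})^{\mathbb{C}}$. But $H^{\imath}$ is a complex analytic variety, and the computation performed at the end of Section \ref{preliminaries} shows that for any complex variety $X$ one has $X^{\mathbb{C}}=X\times X^{*}$. Applied to $H^{\imath}$, this gives $(H^{\imath})^{\mathbb{C}}=H^{\imath}\times H^{\imath *}$, which delivers the inclusion.

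For the codimension, since $\dim_{\mathbb{C}}(H^{\imath}\times H^{\imath *})=2(n+1)=2n+2$, it suffices to prove $\dim_{\mathbb{C}} H^{\mathbb{C}}=2n+1$. I would compute this locally at a regular point. Pick $p\in H_{reg}$ and apply Proposition \ref{est.loc.slf} to get coordinates $(z',z'')\in \mathbb{C}^{n+1}\times \mathbb{C}^{N-n-1}$ in which $H=\{\im(z_{n+1})=0,\ z''=0\}$. The ideal $\mathcal{I}(H)\subset\mathcal{A}_{N\mathbb{R}}$ is then generated by $(z_{n+1}-\bar z_{n+1})/(2i)$ together with $z_j$ and $\bar z_j$ for $j=n+2,\ldots,N$. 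Applying the complexification formula \eqref{complexification-function}, the equations of $H^{\mathbb{C}}$ in $\mathbb{C}^{N}\times \mathbb{C}^{N*}$ are $z_{n+1}-w_{n+1}=0$ and $z_j=w_j=0$ for $j\geq n+2$, a regular system of $2N-2n-1$ independent linear equations. Consequently $H^{\mathbb{C}}$ is smooth of complex dimension $2n+1$ at $(p,\bar p)$.

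To pass from this local count to the germ at the origin, I would invoke property \emph{(iii)} of the excerpt: complexification gives a bijection between irreducible components of $H$ and irreducible components of $H^{\mathbb{C}}$. Each irreducible component of $H$ is itself a Levi-flat subset of $\mathcal{L}$-dimension $n$, so it carries regular points where the local form above applies, forcing each irreducible component of $H^{\mathbb{C}}$ to have complex dimension $2n+1$. Hence $\dim_{\mathbb{C}} H^{\mathbb{C}}=2n+1$, and the codimension in $H^{\imath}\times H^{\imath *}$ is exactly one. The delicate point is precisely this last transfer: one must check that every irreducible component of a germ of Levi-flat subset inherits the Levi-flat property with the same $\mathcal{L}$-dimension, ensuring each component produces the expected dimension after complexification. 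This is the main conceptual obstacle, while the remaining steps are essentially formal consequences of the material already set up in Sections \ref{preliminaries} and \ref{levi-section}.
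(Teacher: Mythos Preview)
Your inclusion argument is identical to the paper's. For the codimension, you take a different route: you compute $\dim_{\mathbb{C}} H^{\mathbb{C}}=2n+1$ directly from Cartan's local normal form at a regular point, and then propagate this to every irreducible component of $H^{\mathbb{C}}$ via property~(iii). The paper instead argues indirectly: first it shows the inclusion $H^{\mathbb{C}}\subset H^{\imath}\times H^{\imath *}$ is proper (otherwise the generating map $\bs{\phi}$ would vanish on all of $H^{\imath}$), giving $\mathrm{codim}\geq 1$; then it observes that for each Levi leaf $L$ one has $L\times L^{*}=L^{\mathbb{C}}\subset H^{\mathbb{C}}$, so $H^{\mathbb{C}}$ contains infinitely many subvarieties of codimension two in $H^{\imath}\times H^{\imath *}$, forcing $\mathrm{codim}<2$.

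Your approach has the advantage of being self-contained: the paper's argument invokes Proposition~\ref{closed-levi-leaves} (that closures of Levi leaves are analytic of dimension $n$), which appears later and whose proof threads through the Segre machinery of Section~\ref{segre-section}; your explicit computation avoids that forward reference entirely. Note also that the delicate point you flag---that each irreducible component of $H$ is again Levi-flat of the same $\mathcal{L}$-dimension---can be bypassed: you only need that each component $H'$ meeting $\overline{H_{reg}}$ has $\dim_{\mathbb{R}}H'=2n+1$, and then the general identity $\dim_{\mathbb{C}}(H')^{\mathbb{C}}=\dim_{\mathbb{R}}H'$ (which is exactly what your local computation in the normal form verifies) finishes the job without appealing to any Levi-flat structure on $H'$.
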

\begin{proof} Since $H \subset H^{\imath}$, it is a consequence of the comments in Section \ref{preliminaries} that
\[H^{\C} \subset (H^{\imath})^{\C} = H^{\imath} \times H^{\imath*}.\]
Now, this inclusion must be proper since, otherwise, given a
defining map  $\bs{\phi}$   for $H$, the complexification $\bs{\phi}^{\C}$ would vanish over $H^{\imath} \times H^{\imath*}$, which would imply
that $\bs{\phi}$ itself  would vanish
over  $H^{i}$. Finally,   if $L$ is the closure of a Levi leaf of $H$, which is an analytic set of dimension
  $\diml H$ (see Proposition
\ref{closed-levi-leaves} below), then $L \times L^{*} = L^{\C} \subset H^{C}$.
That is, $H^{\C}$ contains infinitely many complex varieties
  of codimension two in $H^{\imath} \times H^{\imath*}$. This implies that the codimension of
$H^{\mathbb{C}}$ in $H^{\imath} \times H^{\imath*}$  is strictly lower than two, which gives the result.
\end{proof}


Denote by $\pi_1:H^{\mathbb{C}}\rightarrow H^{\imath}$ and $\pi_2:H^{\mathbb{C}}\rightarrow H^{\imath *}$ the restrictions of the two canonical  projections to $H^\mathbb{C}\subset\mathbb{C}^{N}\times \mathbb{C}^{N*}  \simeq  \mathbb{C}^{N}\times \mathbb{C}^{N}\rightarrow \mathbb{C}^{N}.$
The following fact appeared in the proof of Theorem \ref{Levi-viz-Hi}. Its usefulness motivates an
 explicit statement:

\begin{prop}
\label{lema-hip alg}
Let $H$ be a germ of real analytic Levi-flat subset. Then given $p\in \overline{H_{reg}},$ we have  $$\pi_1(H^{\mathbb{C}}_{(p,\bar{p})})=H_p^{\imath} \quad \text{and} \quad \pi_2(H^{\mathbb{C}}_{(p,\bar{p})})=H_{\bar{p}}^{\imath *},$$
where the sets involved are germs of $H^{\mathbb{C}}$, $H^{\imath}$ and $H^{\imath*}$ at $(p,\bar{p})$, $p$ and $\bar{p}$, respectively.
\end{prop}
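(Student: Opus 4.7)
Both projections play symmetric roles, so I will concentrate on $\pi_{1}$; the argument for $\pi_{2}$ is completely analogous. The inclusion $\pi_{1}(H^{\mathbb{C}}_{(p,\bar p)})\subset H^{\imath}_{p}$ is immediate from Proposition~\ref{Hc-em-V-V}, so the content of the statement lies in the opposite inclusion. Using property (iii) of the complexification I first reduce to the case where $H$ is irreducible, which in turn makes $H^{\mathbb{C}}$ and $H^{\imath}$ irreducible germs.

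The starting point is a direct local computation at a regular point $q\in H_{reg}$. In the coordinates furnished by Proposition~\ref{est.loc.slf}, the ideal $\mathcal{I}(H)$ is generated by $\re(z_{i}),\im(z_{i})$ for $i=n+2,\dots,N$, together with $\im(z_{n+1})$; complexifying these functions according to the recipe in \eqref{complexification-function} cuts out $H^{\mathbb{C}}$ locally as $\{z''=0,\ w''=0,\ z_{n+1}=w_{n+1}\}$, while $H^{\imath}$ reads simply as $\{z''=0\}$. A plain inspection then gives $\pi_{1}(H^{\mathbb{C}}_{(q,\bar q)})=H^{\imath}_{q}$ and shows that the generic fibre of $\pi_{1}|_{H^{\mathbb{C}}}$ has complex dimension $n$.

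To promote this from a regular point to an arbitrary $p\in\overline{H_{reg}}$, I would select polydisc representatives $V\ni p$ and $W\ni\bar p$ relatively compact in the neighborhood where Theorem~\ref{Levi-viz-Hi} realizes $H^{\imath}$ and $H^{\imath*}$. Proposition~\ref{Hc-em-V-V} then yields $H^{\mathbb{C}}\cap(V\times W)\subset(H^{\imath}\cap V)\times(H^{\imath*}\cap W)$, and since $H^{\imath*}\cap \overline{W}$ is compact, the restricted projection $\pi_{1}\colon H^{\mathbb{C}}\cap(V\times W)\to V$ is a proper holomorphic map. By Remmert's proper mapping theorem, the image $A:=\pi_{1}(H^{\mathbb{C}}\cap(V\times W))$ is a complex analytic subset of $V$, contained in $H^{\imath}\cap V$.

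It remains to verify $A_{p}=H^{\imath}_{p}$. Since $H_{\Delta}=i(H)\subset H^{\mathbb{C}}$ and $\pi_{1}\circ i$ is the identity on $H$, the set $A$ contains $H$ on a neighborhood of $p$; hence at every regular point $q\in H_{reg}$ close to $p$ the local description of the second paragraph forces $A$ to contain the full $(n+1)$-dimensional germ $H^{\imath}_{q}$. Thus $A$ is an analytic subset of the irreducible germ $H^{\imath}_{p}$ containing a nonempty open portion of it, so $A_{p}=H^{\imath}_{p}$, which establishes the reverse inclusion. The one delicate step of the plan is arranging the polydiscs $V$ and $W$ so that the restricted projection is genuinely proper; once this is in place, Remmert's theorem together with the irreducibility/minimality of $H^{\imath}$ closes the argument.
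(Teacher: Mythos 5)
The paper gives no proof of this proposition: it states that the fact ``appeared in the proof of Theorem \ref{Levi-viz-Hi}'' and defers to Brunella's construction of the $\imath$-complexification, so there is no in-paper argument to compare yours against line by line. Your reduction to the irreducible case, the local computation at a regular point (which correctly yields $H^{\mathbb{C}}\supset\{z''=0,\ w''=0,\ z_{n+1}=w_{n+1}\}$ locally, hence $\pi_1(H^{\mathbb{C}}_{(q,\bar q)})\supset H^{\imath}_q$ there), and the final ``analytic set containing an open piece of an irreducible germ'' step are all sound.

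The gap is exactly the step you flagged: the restricted projection $\pi_1\colon H^{\mathbb{C}}\cap(V\times W)\to V$ is \emph{not} proper, for any choice of polydiscs. The fibre of $\pi_1$ over $q$ is $\{q\}\times\Sigma_q^{*}$, and by \eqref{segre-pi2}--\eqref{segre-pi1} the mirrored Segre variety $\Sigma_q^{*}$ is an analytic subset of $H^{\imath*}\cap W$ of dimension at least $n\geq 1$. A positive-dimensional analytic subset of $W$ has no compact irreducible components, so it necessarily accumulates on $\partial W$; consequently $\pi_1^{-1}(K)\cap\bigl(H^{\mathbb{C}}\cap(V\times W)\bigr)=H^{\mathbb{C}}\cap(K\times W)$ is never compact, and compactness of $H^{\imath*}\cap\overline{W}$ does not rescue this. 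Remmert's proper mapping theorem therefore does not apply as invoked, and the analyticity of $A=\pi_1(H^{\mathbb{C}}\cap(V\times W))$ --- on which your whole last paragraph rests --- is unproved. To repair this you must deal with the positive-dimensional fibres directly: either invoke the Remmert rank theorem (the image of an irreducible analytic set of dimension $2n+1$ under a holomorphic map whose fibres have dimension $\geq n$ is, after shrinking, an analytic set of dimension $\leq n+1$, obtained by slicing the fibres with a generic plane to reduce to the finite, proper case), or simply observe that the identity $\pi_1(H^{\mathbb{C}})=H^{\imath}$ is how $H^{\imath}$ is \emph{constructed} in the proof of Theorem \ref{Levi-viz-Hi}, which is the route the paper itself takes.
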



\section{Segre varieties of Levi-flat subsets}
\label{segre-section}

Let $H$ be a germ of real analytic Levi-flat subset at $(\C^{N},0)$,
$\bs{\phi}=(\phi_1,...,\phi_k) \in  (\A_{N \R})^{k}$  be a generating map and $U$ be a $\bs{\phi}$-reflexive neighborhood.

\begin{ddef}\label{def-segre}
\emph{For each $p \in H^{\imath}\cap U$, the set
$$\Sigma_p(U, \bs{\phi}):=\{z\in U;\bs{\phi}(z,\bar{p})=0\}\cap H^{\imath} \subset U \cap H^{\imath}$$
is called \emph{Segre variety} at $p$ associated to the  generating map $\bs{\phi}$ and to the $\bs{\phi}$-reflexive neighborhood $U$.}
\end{ddef}

The Segre variety $\Sigma_p(U, \bs{\phi}) \subset U$ is a closed analytic set  that contains $p$ if and only if $p \in H$. It does not depend on the generating map and on the neighborhood $U$ of $0 \in \mathbb{C}^N$ in the following sense: if $\bs{\psi}$ is another generating map of $H$ and $V$ is a $\bs{\psi}$-reflexive neighborhood of $0 \in \mathbb{C}^N,$ then there exists a neighborhood of the origin  $W\subset V \cap U$ such that whenever $p\in W \cap H^{\imath}$ it holds $\Sigma_p(U,\bs{\phi})\cap W=\Sigma_p(V,\bs{\psi})\cap W$.
In particular, the germ at $p$ of the Segre variety is well defined. It will be  denote by $\Sigma_p$. It contains $p$
if and only if $p \in H.$

Recall that, by Proposition \ref{Hc-em-V-V}, we have $H^{\mathbb{C}}\subset H^{\imath}\times H^{\imath *}$.  Let $\pi_1 :H^{\mathbb{C}}\rightarrow H^{\imath}$ and $\pi_2 :H^{\mathbb{C}}\rightarrow H^{\imath *}$ be the canonical projections. For $p \in  H^{\imath}$, if we identify $H^{\imath} \times \{p\} \simeq  H^{\imath}$, we have, by \eqref{complexification-function} and \eqref{mirror-function},
\begin{equation}
\label{segre-pi2}
 \pi_{2}^{-1}(\bar{p}) = \{z \in H^{\imath}; \bs{\phi}^{\C}(z,\bar{p}) = 0\} = \Sigma_p .
 \end{equation}
Similarly, under the identification
$\{p\} \times H^{\imath*}  \simeq  H^{\imath*}$, we have that
\begin{equation}
\label{segre-pi1}
\pi_{1}^{-1}(p) = \{w \in H^{\imath*}; \bs{\phi}^{\C}(p,w) = 0\} = \Sigma_p^{*} .
 \end{equation}

We have the following result:

\begin{prop}\label{fornaes} Let $W \subset U \subset \mathbb{C}^{N}$ be an open set and
 $\bs{\phi}(z,\bar{z})$ be a real analytic map in $U$. Suppose that    $L\subset W$ is a complex variety such that $\bs{\phi}(z,\bar{z}) = 0$ for all $z\in L$. Then, for each fixed $p\in L$, we have
 $\bs{\phi}(z,\bar{p})= 0$ for all $z\in L$.
\end{prop}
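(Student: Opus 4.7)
The plan is to use the complexification machinery of Section \ref{preliminaries} to separate the variables $z$ and $\bar{z}$: after parametrizing $L$ holomorphically near a smooth point, the real-analytic vanishing $\bs{\phi}(z,\bar{z})=0$ becomes the vanishing of a genuinely holomorphic function on a totally real submanifold of maximal real dimension, which forces identical vanishing by a power-series argument.

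Concretely, it suffices to prove the conclusion on $L_{reg}$, since this set is dense in $L$ and $\bs{\phi}$ is continuous. Fix a point $q\in L_{reg}$ and a local holomorphic parametrization $\gamma\colon\Omega\to L$, with $\Omega\subset\mathbb{C}^{d}$ a polydisk and $d=\dim_{\mathbb{C}}L$. Define the mirror parametrization $\gamma^{*}(s):=\overline{\gamma(\bar{s})}$ on $\Omega^{*}$, which is holomorphic in $s$, and consider
\[G(t,s):=\bs{\phi}^{\mathbb{C}}(\gamma(t),\gamma^{*}(s)),\]
where $\bs{\phi}^{\mathbb{C}}$ is the extrinsic complexification from \eqref{complexification-function}. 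Then $G$ is holomorphic on $\Omega\times\Omega^{*}$, and the hypothesis gives
\[G(t,\bar{t})=\bs{\phi}^{\mathbb{C}}(\gamma(t),\overline{\gamma(t)})=\bs{\phi}(\gamma(t),\overline{\gamma(t)})=0\]
for every $t\in\Omega$.

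Expanding $G$ as a power series in $(t,s)$ centered at the origin, the relations $G(t,\bar{t})=0$ on $\Omega$ translate into a double power series in the independent real-analytic variables $t$ and $\bar{t}$ that vanishes identically; all coefficients must therefore vanish and $G\equiv 0$ on $\Omega\times\Omega^{*}$. Specializing to any $p=\gamma(t_{0})\in\gamma(\Omega)$ and setting $s=\bar{t_{0}}$, so that $\gamma^{*}(\bar{t_{0}})=\overline{\gamma(t_{0})}=\bar{p}$, yields
\[\bs{\phi}(\gamma(t),\bar{p})=G(t,\bar{t_{0}})=0\quad\text{for every }t\in\Omega,\]
which establishes the conclusion on the neighborhood $\gamma(\Omega)$ of $p$ in $L$.

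To pass from the neighborhood $\gamma(\Omega)$ to all of $L$, I would invoke analytic continuation: the map $z\mapsto\bs{\phi}^{\mathbb{C}}(z,\bar{p})$ is holomorphic on $U$ and vanishes on the open subset $\gamma(\Omega)$ of the irreducible component of $L$ through $p$, hence on the whole component, and then by density of $L_{reg}$ and continuity on all of $L$. The heart of the argument---and the only non-routine step---is the passage from the real-analytic condition $\bs{\phi}(z,\bar{z})=0$ on $L$ to the holomorphic vanishing of $G$ on $\Omega\times\Omega^{*}$; this is the classical complexification trick, hinging on the fact that $t$ and $\bar{t}$ behave as independent variables in the power-series expansion.
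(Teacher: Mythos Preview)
Your argument is correct. It differs in execution from the paper's proof, which dispatches the result in one line using the machinery of Section~\ref{preliminaries}: since $L$ is a complex variety one has $L^{\mathbb{C}}=L\times L^{*}$, and since complexification respects inclusions, $L\subset H:=\{\bs{\phi}=0\}$ yields $L\times L^{*}\subset H^{\mathbb{C}}=\{\bs{\phi}^{\mathbb{C}}=0\}$; intersecting with the slice $\{w=\bar{p}\}$ then gives $L\times\{\bar{p}\}\subset\{\bs{\phi}^{\mathbb{C}}(z,\bar{p})=0\}$. Your route---local holomorphic parametrization of $L_{reg}$, the identity principle for the holomorphic function $G$ on the maximally totally real set $\{s=\bar t\}$, then analytic continuation along $L$---is the hands-on unpacking of precisely the mechanism behind the identity $L^{\mathbb{C}}=L\times L^{*}$. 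The paper's version is shorter because it cashes in work already done in Section~\ref{preliminaries}; yours is more self-contained and would stand independently of those preliminaries.

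One minor point: your closing clause ``and then by density of $L_{reg}$ and continuity on all of $L$'' does not actually transport the vanishing of $z\mapsto\bs{\phi}^{\mathbb{C}}(z,\bar p)$ from the irreducible component through $p$ to other components of $L$, since density does not connect distinct components. This is harmless in context: the paper's proof, read carefully, rests on germ-level facts and has the same limitation, and in the only application that follows (the Levi leaf $L_{p}\subset\Sigma_{p}$) the variety $L$ is connected.
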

\begin{proof} Without loss of generality, we can suppose that $W = U$ and that $U$ is $\bs{\phi}$-reflexive. Let $H = \{ \bs{\phi}(z,\bar{z}) = 0 \} \subset U$.
Our hypothesis is that $L \subset H$. Taking complexifications, we find $L \times L^{*} \subset H^{\C} \subset
\C^{N} \times \C^{N*}$. Given $p \in L$, we have
$$L \times \{\bar{p}\} \subset H^{\C} \cap
 (\C^{N} \times \{\bar{p}\}) = \{ \bs{\phi}^{\C}(z,\bar{p}) = 0 \}.$$
 This is equivalent to $L   \subset   \{ \bs{\phi}(z,\bar{p}) = 0 \}$, which is the desired result.
\end{proof}

%


As a consequence, if $H$ is a Levi-flat subset and $L_p$ is the Levi leaf at $p\in \ov{H_{reg}}$, then $L_p \subset \Sigma_p$, which gives
${\rm codim\,}_{\mathbb{C},H^{\imath}}(\Sigma_p) \leq  {\rm codim\,}_{\mathbb{C},H^{\imath}}(L_p) = 1$. This remark motivates the following definition:

\begin{ddef}
\emph{Let $H$ be a germ of real analytic Levi-flat subset.  The point $p\in H $ is said to be \emph{Segre degenerate} or simply \emph{$S$-degenerate} if
$${\rm codim\,}_{\mathbb{C},H^{\imath}} (\Sigma_p)=0.$$
When ${\rm codim\,}_{\mathbb{C},H^{\imath}} (\Sigma_p) =1,$ the point $p\in H$ is called \emph{Segre ordinary} or \emph{$S$-ordinary}. We denote by $S_d$ the  set of  $S$-degenerate points  of $H.$}
\end{ddef}

For a germ $\phi \in \A_{N \R}$ and for a $\phi$-reflexive neighborhood $U$,   equation \eqref{complexification-function} gives that, whenever
$(p,\bar{q}) \in U \times U^{*}$,
\[ \phi^{\C}(q,\bar{p}) = 0 \quad  \Leftrightarrow \quad  \xbar{\phi^{\C}(q,\bar{p}) = 0} \quad \Leftrightarrow \quad  \phi^{\C}(p,\bar{q}) = 0  .\]
This applied to the components of a generating map $\bs{\phi}$ of a Levi-flat subset $H$ and
 to a $\bs{\phi}$-reflexive neighborhood $U$ gives the following:
\begin{prop} \label{obs-seg2}
We have $q\in \Sigma_p(U,\bs{\phi})$ if and only if $p\in \Sigma_q(U,\bs{\phi}).$ In particular, if $p\in S_d,$ then $p\in \Sigma_q$ for every $ q\in U \cap H^{\imath}.$
\end{prop}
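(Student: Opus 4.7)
The plan is to reduce the proposition to the reality identity highlighted in the paragraph immediately preceding the statement, namely that for $\phi\in\A_{N\R}$ and $(p,\bar{q})\in U\times U^{*}$, one has $\phi^{\C}(q,\bar{p})=0$ if and only if $\phi^{\C}(p,\bar{q})=0$. This identity rests on the reality constraint $a_{\mu\nu}=\bar{a}_{\nu\mu}$ on the coefficients of $\phi$, via the computation
\[ \overline{\phi^{\C}(q,\bar{p})} \;=\; \sum_{\mu,\nu}\bar{a}_{\mu\nu}\bar{q}^{\mu}p^{\nu} \;=\; \sum_{\mu,\nu}a_{\nu\mu}\bar{q}^{\mu}p^{\nu} \;=\; \phi^{\C}(p,\bar{q}), \]
which is exactly the formula already used by the authors.

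For the first equivalence, I would simply unwind the definition of Segre variety. The condition $q\in\Sigma_{p}(U,\bs{\phi})$ means $q\in H^{\imath}\cap U$ together with $\bs{\phi}^{\C}(q,\bar{p})=0$, which by the reality identity applied to each component of the generating map is equivalent to $\bs{\phi}^{\C}(p,\bar{q})=0$. Because the very definition of $\Sigma_{p}(U,\bs{\phi})$ and $\Sigma_{q}(U,\bs{\phi})$ requires both $p$ and $q$ to lie in $H^{\imath}\cap U$, this last condition is literally $p\in\Sigma_{q}(U,\bs{\phi})$, completing the biconditional.

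For the ``in particular'' claim, suppose $p\in S_{d}$, so that $\Sigma_{p}$ has complex codimension $0$ in $H^{\imath}$. Since $\Sigma_{p}$ is a closed analytic subset of $U\cap H^{\imath}$ containing $p$, codimension zero forces $\Sigma_{p}$ to contain the irreducible component of $H^{\imath}$ through $p$; after shrinking $U$ to a connected reflexive neighborhood we may assume $\Sigma_{p}=U\cap H^{\imath}$. Hence every $q\in U\cap H^{\imath}$ satisfies $q\in\Sigma_{p}$, and the first equivalence yields $p\in\Sigma_{q}$.

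There is essentially no obstacle here: the conceptual content is the reality identity, already explicitly derived by the authors just before the statement. The only point requiring a line of justification is the codimension-zero step, where one uses that an analytic subset of $H^{\imath}$ of codimension $0$ must exhaust a suitable irreducible neighborhood (by the identity principle for analytic sets).
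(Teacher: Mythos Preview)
Your proof is correct and follows essentially the same route as the paper: the authors derive the reality identity $\phi^{\C}(q,\bar{p})=0 \Leftrightarrow \phi^{\C}(p,\bar{q})=0$ in the paragraph immediately preceding the proposition and then state that applying it componentwise to $\bs{\phi}$ yields the result, leaving the ``in particular'' clause implicit. Your write-up simply makes explicit the codimension-zero step that the paper takes for granted.
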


We have the following proposition:

\begin{prop}\label{anal-comp}
 $S_d$ is a complex analytic variety.
\end{prop}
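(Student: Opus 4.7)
My plan is to realize $S_d$ as an intersection of Segre varieties and then use Noetherianity of the local ring of holomorphic germs on $H^{\imath}$ to reduce to a finite intersection, which is automatically analytic. The key observation, provided by Proposition~\ref{obs-seg2}, is the symmetric relation $p\in\Sigma_q \iff q\in\Sigma_p$ together with its stated consequence that $p\in S_d$ forces $p\in\Sigma_q$ for every $q\in U\cap H^{\imath}$. This gives immediately the inclusion $S_d\subseteq \bigcap_{q\in U\cap H^{\imath}}\Sigma_q$. For the reverse inclusion, if $p$ lies in $\Sigma_q$ for every such $q$, the symmetric relation yields $U\cap H^{\imath}\subseteq \Sigma_p$, so the germ of $\Sigma_p$ at $p$ coincides with that of $H^{\imath}$; in particular $\codim_{\mathbb{C},H^{\imath}}(\Sigma_p)=0$ and $p\in S_d$. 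Hence
\[
S_d \;=\; \bigcap_{q\in U\cap H^{\imath}}\Sigma_q .
\]

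To verify analyticity, I would work locally at an arbitrary point $p_0\in H^{\imath}$. Each $\Sigma_q$ is a closed complex analytic subset of a neighborhood of $p_0$ in $H^{\imath}$, because for fixed $q$ the defining equations $\bs{\phi}^{\mathbb{C}}(z,\bar{q})=0$ are holomorphic in $z$. Let $\mathcal{I}(\Sigma_q)_{p_0}$ denote the ideal of the germ of $\Sigma_q$ at $p_0$ in the Noetherian local ring $\mathcal{O}_{H^{\imath},p_0}$ of holomorphic germs on $H^{\imath}$, and set
\[
J \;:=\; \sum_{q\in U\cap H^{\imath}} \mathcal{I}(\Sigma_q)_{p_0} .
\]
Since $\mathcal{O}_{H^{\imath},p_0}$ is Noetherian, $J$ is finitely generated; picking generators coming from finitely many summands produces points $q_1,\ldots,q_\ell\in U\cap H^{\imath}$ with $J = \mathcal{I}(\Sigma_{q_1})_{p_0} + \cdots + \mathcal{I}(\Sigma_{q_\ell})_{p_0}$. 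Passing to zero sets via the identity $V(I_1+I_2)=V(I_1)\cap V(I_2)$, one obtains the germ-level equality
\[
S_d \;=\; \Sigma_{q_1}\cap\cdots\cap\Sigma_{q_\ell}
\]
at $p_0$, a finite intersection of complex analytic germs, hence itself complex analytic. Since $p_0$ was arbitrary, $S_d$ is a complex analytic subvariety of $H^{\imath}$ (and therefore of the ambient space).

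No single step looks genuinely difficult: the whole argument rests on (i) the symmetric relation of Proposition~\ref{obs-seg2} and (ii) the classical Noetherian reduction of an arbitrary intersection of analytic germs to a finite one. The small point that needs attention is that the parameter $q$ in the intersection ranges over all of $U\cap H^{\imath}$ and not merely over $H$, but this is harmless since Segre varieties are defined for every $q\in U\cap H^{\imath}$, whether or not $q$ belongs to $H$.
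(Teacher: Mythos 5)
Your proof is correct and follows essentially the same route as the paper: both identify $S_d$, via the symmetry $\bs{\phi}^{\C}(z,\bar{p})=0 \Leftrightarrow \bs{\phi}^{\C}(p,\bar{z})=0$, with the intersection $\bigcap_{q\in U\cap H^{\imath}}\Sigma_q$ of complex analytic sets cut out by the complexified defining equations. The only difference is that you spell out the Noetherian reduction of the infinite intersection to a finite one, a standard step the paper leaves implicit when it simply declares the intersection to be analytic.
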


\begin{proof}
Following the above notation,
we have
\begin{eqnarray}
\label{s-detenerate-definition}
S_d &= &\{p\in U \cap H^{\imath}; \bs{\phi}^{\C}(z,\bar{p})=0 \ \forall \ z \in U \cap H^{\imath}\} \medskip \\
 &= &
 \{p\in U \cap H^{\imath};  \bs{\phi}^{\C}(p,\bar{z}) = 0 \  \forall \ z \in U \cap H^{\imath}\},  \nonumber
\end{eqnarray}
 and then
$$S_d=\left(\displaystyle\bigcap_{z\in U \cap H^{\imath}}\{\bs{\phi}^{\C}(p,\bar{z}) =0\}\right)\cap H^{\imath}.$$
This defines $S_d$ as a  complex analytic set.
\end{proof}

It is worth commenting that   $S_d$ is a proper subset of  $H^{\imath}$. Indeed,    otherwise,
by  \eqref{s-detenerate-definition},
$\bs{\phi}^{\C}$ would vanish over $H^{\imath} \times H^{\imath *}$. This would happen if and only if
 $\bs{\phi}^{\C} \equiv 0$, which is impossible.
It is a known fact that   the set of $S$-degenerated points
of a Levi-flat hypersurface  form a complex subvariety of codimension at least two contained in $H_{sing}$  \cite{lebl2013}. For
Levi-flat subsets we can state the following:

\begin{prop}
\label{codim2}
$S_d$ has codimension at least two in  $H^{\imath}.$
\end{prop}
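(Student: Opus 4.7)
The plan is to argue by contradiction. Suppose $S_d$ contains an irreducible component $W$ of complex dimension $n$, i.e., of codimension $1$ in $H^{\imath}$. I will produce an irreducible component of $H^{\C}$ incompatible with property (iii) of the complexification from Section \ref{preliminaries}.

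First I observe that, since each $p \in W$ lies in $S_d$, we have $\Sigma_p = H^{\imath}$, and equation \eqref{segre-pi1} gives $\pi_1^{-1}(p) = \{p\} \times H^{\imath*}$. Taking the union over $p \in W$ yields $W \times H^{\imath*} \subset H^{\C}$. By Proposition \ref{Hc-em-V-V}, $\dim_{\C} H^{\C} = 2n+1 = \dim_{\C}(W \times H^{\imath*})$. Choosing an irreducible component $V$ of $H^{\imath}$ of complex dimension $n+1$ (which exists since $\dim_{\C} H^{\imath} = n+1$ by Theorem \ref{Levi-viz-Hi}), the product $W \times V^{*}$ is a closed complex-irreducible subvariety of $H^{\C}$ of dimension $2n+1$, and is therefore an irreducible component of $H^{\C}$.

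Next I invoke property (iii) to write $W \times V^{*} = H_j^{\C}$ for some irreducible real-analytic component $H_j$ of $H$, and recover $H_j$ from the diagonal intersection:
\[
i(H_j) \;=\; H_j^{\C} \cap \Delta \;=\; (W \times V^{*}) \cap \Delta \;=\; i(W \cap V),
\]
so $H_j = W \cap V$, a complex subvariety of $\C^{N}$. The formula $X^{\C} = X \times X^{*}$ for complex $X$ (derived at the end of Section \ref{preliminaries}) then gives $H_j^{\C} = (W \cap V) \times (W \cap V)^{*}$, whose complex dimension is $2 \dim_{\C}(W \cap V)$, an even integer bounded by $2n$. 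This contradicts $\dim_{\C}(W \times V^{*}) = 2n+1$.

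The main obstacle I anticipate is the bookkeeping of irreducibilities and dimensions — in particular, ensuring that $H^{\imath}$ supplies a component $V$ of dimension $n+1$ and that $W \times V^{*}$ is an irreducible component of $H^{\C}$ rather than a proper subset of one. The key geometric point is a parity mismatch: a complex subvariety has even real dimension, while the top-dimensional components of $H^{\C}$ must have the odd complex dimension $2n+1$, so they cannot arise from a complex $H_j$.
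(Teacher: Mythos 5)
Your proof is correct, and it reaches the contradiction by a genuinely different mechanism than the paper's. The two arguments share their first half: a codimension-one component $W$ of $S_d$ forces, via the description \eqref{segre-pi1}--\eqref{segre-pi2} of Segre varieties as fibres of the projections, a product subvariety of full dimension $2n+1$ inside $H^{\C}$. From there the paper first reduces to $\diml H=1$ by cutting with a plane in general position (so that the offending component is a curve $\Gamma$), identifies $H^{\imath}\times\Gamma^{*}$ with all of the irreducible $H^{\C}$, and contradicts the surjectivity $\pi_2(H^{\C})=H^{\imath*}$ of Proposition \ref{lema-hip alg}. You instead stay in arbitrary Levi dimension, use property (iii) of the extrinsic complexification to recognize $W\times V^{*}$ as $H_j^{\C}$ for some component $H_j$ of $H$, recover $H_j=W\cap V$ from the diagonal, and contradict the parity $\dim_{\C}X^{\C}=2\dim_{\C}X$ valid for complex varieties. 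Your route dispenses with the slicing step (and with the attendant verifications that $(H_\alpha)^{\imath}=H^{\imath}\cap\alpha$ and $(S_\alpha)_d\supseteq S_d\cap\alpha$), at the price of leaning on the identity $H_j^{\C}\cap\Delta=i(H_j)$ and on the formula $X^{\C}=X\times X^{*}$ from Section \ref{preliminaries}. Two small caveats, both shared with the paper's own proof: the case where $S_d$ has codimension zero is not treated inside the proof but is disposed of by the remark preceding the proposition (which implicitly uses irreducibility of $H^{\imath}$); and when $H$ is irreducible your argument collapses further, since then $H_j=H=W\cap V$ would be a complex variety of real dimension at most $2n$, contradicting $\dim_{\R}H=2n+1$ outright.
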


\begin{proof}
We first suppose that  $n=\diml H =1$, so that    $dim_\mathbb{R}H=3$ and $dim_\mathbb{C}H^{\imath}=2.$
By contradiction, suppose that there exists a one-dimensional irreducible component $\Gamma\subset S_d$. We have  $\Sigma_p=H^{\imath}$ for every $p\in \Gamma$.
As before, let $\pi_2 :H^{\mathbb{C}} \subset H^{\imath}\times H^{\imath *} \rightarrow H^{\imath *}$ be the projection in the second coordinate.
Then,
by \eqref{segre-pi2},
we have $\pi_2^{-1}(\bar{p})  \simeq \Sigma_p  = H^{\imath}$   for every $p\in \Gamma$. Therefore $\pi_2^{-1}(\Gamma^*)= H^{\imath} \times \Gamma^* $ is  a three-dimensional variety. On the other hand,   $H^{\mathbb{C}}$ is irreducible and thus   $\pi_2^{-1}(\Gamma^*)= H^{\mathbb{C}}$, which gives
$\Gamma^*= \pi_2(H^{\mathbb{C}}) = H^{\imath *}$.
 This is a contradiction, since $\Gamma^{*}$ is properly contained in $H^{\imath *}$.

 The general case $n=\diml H >  1$ follows from the particular one by taking planar sections.
 Consider  a complex plane $\alpha$ of codimension $n-1$ simultaneously transversal to $H,$ $H^{\imath}$ and $H_{sing}$. The sets
     $H_\alpha=H\cap \alpha$ and $H_\alpha^{\imath}=H^{\imath}\cap \alpha$   have dimensions $dim_\mathbb{R}H_\alpha=3$ and $dim_\mathbb{C}H_\alpha^{\imath}=2.$ By the minimality property, we have that  $H^{\imath}_\alpha=(H_\alpha)^{\imath}$ is the $\imath$-complexification of $H_\alpha$.
Let $\bs{\phi}$ be a defining map for $H$.   If $(S_{\alpha})_d$ denotes the set of $S$-degenerated points of $H_{\alpha}$, we have
$$(S_\alpha)_d=\{p\in H_{\alpha}^{\imath}; \bs{\phi}|_{\alpha}(z,\bar{p})\equiv 0 \text{ on } H^{\imath}_\alpha \} \supseteq \{p\in H^{\imath};\bs{\phi}(z,\bar{p})\equiv0 \text{ on } H^{\imath}  \}\cap \alpha=S_d\cap \alpha.$$
The particular case gives that   $(S_\alpha)_d$  is formed by isolated points, which
is enough to conclude that ${\rm codim}_{\mathbb{C},H^{\imath}}S_d\geq 2$.
 \end{proof}

Levi leaves of a real analytic Levi-flat hypersurface    are closed analytic varieties.
The same hods for Levi-flat subsets:

\begin{prop}
\label{closed-levi-leaves}
The Levi leaves of a germ of Levi-flat subset are closed analytic sets.
\end{prop}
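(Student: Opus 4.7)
The plan is to realise the closure of each Levi leaf as a union of irreducible components of a well-chosen Segre variety; analyticity will then be automatic. Given $p \in H_{reg}$ with Levi leaf $L_p$, the two ingredients I will combine are the inclusion $L_p \subset \Sigma_p$ recorded just after Proposition \ref{fornaes}, and the codimension estimate $\dim_{\C} S_d \leq n-1$ furnished by Proposition \ref{codim2}.

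First I reduce to the case of an $S$-ordinary base point on the leaf. Since $\dim_{\C} L_p = n$ strictly exceeds $\dim_{\C} S_d$, the leaf cannot be contained in $S_d$; I pick any $q \in L_p \setminus S_d$. Then $q$ is $S$-ordinary, so $\Sigma_q$ has pure complex dimension $n$ inside $H^{\imath}$. Applying Proposition \ref{fornaes} to the complex variety $L_q = L_p$, along which the real defining map $\bs{\phi}$ of $H$ vanishes, yields $L_p \subset \Sigma_q$.

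Because $L_p$ and $\Sigma_q$ share the same complex dimension $n$, at every point $x \in L_p$ which is regular in $\Sigma_q$ the submanifold $L_p$ coincides locally with $\Sigma_q^{reg}$. A connectedness argument then forces $L_p$ to lie in a single $n$-dimensional irreducible component $M$ of $\Sigma_q$ and to be open and dense in $M \cap H_{reg}$; therefore $\overline{L_p} = M$ is a closed analytic subset of the ambient neighbourhood.

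The delicate step is the global identification of $\overline{L_p}$ with a single component $M$. One must exclude the possibility that the leaf, while connected and of constant dimension, switches between distinct $n$-dimensional irreducible components of $\Sigma_q$ along their common intersection locus. For $n \geq 2$ this is immediate from the fact that such an intersection has complex dimension at most $n-1$, so removing it does not disconnect the $2n$-dimensional smooth manifold $L_p$; for $n = 1$ a separate analytic-continuation argument along the Riemann surface $L_p$ is required. Once this is settled, the analyticity of $\overline{L_p}$ is inherited directly from that of $\Sigma_q$.
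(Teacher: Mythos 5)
Your argument is correct and follows essentially the same route as the paper's proof: use Proposition \ref{codim2} to find an $S$-ordinary point $q$ on the leaf, note that $L_p \subset \Sigma_q$ with $\dim_{\mathbb{C}} L_p = \dim_{\mathbb{C}} \Sigma_q = n$, and conclude that $\overline{L_p}$ is an irreducible component of the analytic set $\Sigma_q$. The paper asserts this last identification in a single line; your discussion of why the leaf fills out one whole component merely makes explicit what the paper leaves implicit.
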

\begin{proof}
Indeed, by   Proposition \ref{codim2},
every Levi leaf   contains   $S$-ordinary points.
Thus, if $p  \in H_{reg}$ is $S$-ordinary   and $L_{p}$ is the corresponding Levi leaf, we have $ \dim_\mathbb{C}L_p = \dim_\mathbb{C} \Sigma_p$. Since $L_p \subset \Sigma_p$, we conclude that  $L_{p}$ is a component of the analytic set $\Sigma_p$.
\end{proof}

\begin{remark} \emph{ For a germ of  real analytic Levi-flat subset $H$ at $(\C^{N},0)$,  a   point $p \in H_{sing}$ is said to be \emph{dicritical} if it belongs to (the closure of) infinitely many leaves of $\LL$.
The main result in \cite{pinchuk2016} states
that the notions of dicriticalness  and Segre degeneracy coincide for real analytic Levi-flat subsets.
}\end{remark}


\section{Levi flat subsets in projective spaces}
\label{integration-section}

In this section we  present some results on real analytic Levi-flat subsets in the complex projective space
$\mathbb{P}^{N} = \mathbb{P}^{N}_\mathbb{C}$.
If $H \subset \mathbb{P}^{N}$ is a real analytic variety, then the natural projection
$$\sigma: \mathbb{C}^{N+1}\setminus \{0\}\rightarrow \mathbb{P}^{N}$$
  identifies $H$ with   the complex cone
\begin{equation}\label{cone}
H_{\kappa}:=\{z\in \mathbb{C}^{N+1}\setminus \{0\};\sigma(z)\in H\}\cup \{0\},
 \end{equation}
 which is a    real analytic subvariety in $\mathbb{C}^{N+1} \setminus \{0\}$.  When $H$ is Levi-flat,  $H_{\kappa}$ naturally inherits the Levi structure of $H$ and $\diml H_{\kappa} = \diml H + 1$. We have that $H$ is algebraic if and only if $H_{\kappa}$ is analytic at $0 \in \mathbb{C}^{N+1}$   \cite[Proposition 2.1]{jiri2012}. Thus, in the real algebraic case,
 some of the local constructions done so far can be repeated   for
 the germ of $H_{k}$ at $(\mathbb{C}^{N+1},0)$.

For instance, we can extend the construction of  the (extrinsic) complexification for a real projective algebraic variety
$H \subset \mathbb{P}^{N}$.   Consider the ideal $\mathcal{I}(H_{\kappa})$ in $\mathbb{C}[z,\bar{z}]$, where
 $z = (z_{1},\ldots,z_{N+1})$ are coordinates of $\mathbb{C}^{N+1}$, and take a system of generators $ \phi_1,...,\phi_k$, where, for $j=1,...,k$, each $\phi_j $  is a bihomogeneous polynomial of bidegree
  $(d_j,d_j)$ in the variables $(z,\bar{z})$.
 Their complexifications   define  a complex variety $H_{\kappa}^{\mathbb{C}}$ in $\mathbb{C}^{N+1}\times \mathbb{C}^{N+1},$ which goes down   to an algebraic subvariety $H^{\mathbb{C}}\subset \mathbb{P}^{N}\times \mathbb{P}^{N}$ called \emph{(extrinsic) projective complexification} of $H$.
Note that $H^{\mathbb{C}}$ inherits   the properties of the local complexification $H_{\kappa}^{\mathbb{C}}$.  We summarize this in the following:

\begin{prop}
\label{algebraic-complexification}
Let $H\subset \mathbb{P}^{N}$ be a real algebraic variety. Then $H^\mathbb{C} \subset \mathbb{P}^N \times \mathbb{P}^N$ is a complex algebraic variety, which is irreducible if and only if
   $H$ is.
\end{prop}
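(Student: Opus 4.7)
My plan is to reduce everything to the affine cone $H_\kappa\subset\mathbb{C}^{N+1}$, exploit the bihomogeneity of its defining polynomials to obtain algebraicity, and then transfer the local correspondence of irreducible components provided by property (iii) of Section \ref{preliminaries} to the global projective setting via the cone–projectivization dictionary.

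For algebraicity, the first point is that bihomogeneity is preserved under complexification. Since the generators $\phi_1,\ldots,\phi_k$ of $\mathcal{I}(H_\kappa)$ are bihomogeneous polynomials in $(z,\bar z)$ of bidegrees $(d_j,d_j)$, formula \eqref{complexification-function} yields polynomials $\phi_j^{\mathbb{C}}(z,w)$ that are bihomogeneous in $(z,w)$ of the same bidegrees. Hence $H_\kappa^{\mathbb{C}}=V(\phi_1^{\mathbb{C}},\ldots,\phi_k^{\mathbb{C}})\subset\mathbb{C}^{N+1}\times\mathbb{C}^{N+1}$ is an algebraic cone, invariant under $(z,w)\mapsto(\lambda z,\mu w)$ for $(\lambda,\mu)\in\mathbb{C}^*\times\mathbb{C}^*$, and it therefore descends to the algebraic subvariety $H^{\mathbb{C}}\subset\mathbb{P}^N\times\mathbb{P}^N$ described in the statement.

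For the irreducibility assertion I would set up the chain of equivalences
\[ H \text{ irreducible} \iff H_\kappa \text{ irreducible} \iff H_\kappa^{\mathbb{C}} \text{ irreducible} \iff H^{\mathbb{C}} \text{ irreducible}. \]
The outer equivalences are the real and complex versions of the cone–projectivization correspondence: the irreducible components of a (bi)conic algebraic variety are themselves (bi)conic, because the connected group $\mathbb{C}^*$ (resp.\ $\mathbb{C}^*\times\mathbb{C}^*$) permutes the finite set of components and hence fixes each of them individually, so they correspond bijectively to the irreducible components of the projective quotient. For the middle equivalence I would pass to germs at the vertices: the germ of $H_\kappa$ at $0$ determines $H_\kappa$ as an algebraic cone, and likewise the germ of $H_\kappa^{\mathbb{C}}$ at $(0,0)$ determines $H_\kappa^{\mathbb{C}}$, so irreducibility of the whole is equivalent to irreducibility of the germ; property (iii) of Section \ref{preliminaries} then produces the desired bijection between irreducible germ-components of $H_\kappa$ and those of $H_\kappa^{\mathbb{C}}$.

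The only delicate point, and the main conceptual obstacle, is verifying that for a conic algebraic variety the global irreducible decomposition agrees with the analytic decomposition of its germ at the vertex. This reduces to showing that each analytic irreducible component of the germ at $0$ is itself conic---once again by the connectedness of the scaling action on the finite set of components---and therefore extends to a unique global algebraic conic subvariety. Once this is in place the chain of equivalences closes, yielding both assertions of the proposition.
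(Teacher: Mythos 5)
Your argument is correct and follows the same route as the paper, which in fact gives no formal proof: it only records the construction via bihomogeneous generators of $\mathcal{I}(H_{\kappa})$ and asserts that $H^{\mathbb{C}}$ ``inherits the properties of the local complexification,'' i.e.\ property (iii) of Section \ref{preliminaries}. Your write-up simply makes explicit the two correspondences the paper leaves implicit --- cone versus projectivization, and germ at the vertex versus global conic variety --- both handled correctly by the connectedness-of-the-scaling-action argument.
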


We now    examine
the intrinsic complexification $H^{\imath}$ of a real analytic Levi-flat subset  $H \subset \mathbb{P}^N$.
In principle, by pasting local $\imath$-complexifications, we build $H^{\imath}$  as a complex analytic variety of dimension $\diml H+1$  defined in an
open neighborhood of $\ov{H}_{reg}$. When $H$ is algebraic,   $H^{\imath}$  extends to an  algebraic subset of $\mathbb{P}^{N}$, as shown in:

\begin{prop}\label{hi-alg-2}
Let $H\subset \mathbb{P}^{N}$ be an irreducible real algebraic  Levi-flat subset of $\LL$-dimension $n$. Then its $\imath$-complexification $H^{\imath}$ extends to an $(n+1)-$dimensional algebraic variety in $\mathbb{P}^{N}.$
\end{prop}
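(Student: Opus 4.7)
The plan is to use the extrinsic projective complexification $H^{\mathbb{C}} \subset \mathbb{P}^{N}\times \mathbb{P}^{N}$ and project it down to $\mathbb{P}^{N}$, showing that the resulting algebraic variety coincides, in a neighborhood of $\overline{H_{reg}}$, with the analytic $\imath$-complexification $H^{\imath}$.

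First, since $H$ is irreducible and real algebraic, Proposition \ref{algebraic-complexification} gives that $H^{\mathbb{C}}\subset \mathbb{P}^{N}\times \mathbb{P}^{N}$ is an irreducible complex algebraic subvariety. Let $\pi_{1}:\mathbb{P}^{N}\times \mathbb{P}^{N}\to \mathbb{P}^{N}$ be the projection on the first factor, and set
\[
V := \pi_{1}(H^{\mathbb{C}})\subset \mathbb{P}^{N}.
\]
Because $\pi_{1}$ is proper and $H^{\mathbb{C}}$ is irreducible projective, $V$ is a closed irreducible algebraic subvariety of $\mathbb{P}^{N}$.

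Next I compute $\dim_{\mathbb{C}} V$. By Proposition \ref{Hc-em-V-V}, in a neighborhood of $\overline{H_{reg}}$ we have $H^{\mathbb{C}}\subset H^{\imath}\times H^{\imath *}$ as a subset of complex codimension one, so $\dim_{\mathbb{C}} H^{\mathbb{C}} = 2(n+1)-1 = 2n+1$. For the fiber of $\pi_{1}$ over an $S$-ordinary point $p\in H^{\imath}$, identity \eqref{segre-pi1} gives $\pi_{1}^{-1}(p)\simeq \Sigma_{p}^{*}$, which has complex dimension $n$ by definition of $S$-ordinary. Applying the fiber-dimension theorem to an irreducible algebraic morphism, I get
\[
\dim_{\mathbb{C}} V \;=\; \dim_{\mathbb{C}} H^{\mathbb{C}} - n \;=\; n+1.
\]

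It remains to identify $V$ with the algebraic extension of $H^{\imath}$. For any $p\in \overline{H_{reg}}$, Proposition \ref{lema-hip alg} says that the germ of $H^{\mathbb{C}}$ at $(p,\bar p)$ projects under $\pi_{1}$ onto the germ of $H^{\imath}$ at $p$; hence $H^{\imath}\subset V$ in a neighborhood of $\overline{H_{reg}}$. Since $V$ is irreducible of dimension $n+1$ and $H^{\imath}$ is a pure-dimensional analytic subvariety of the same dimension contained in $V$ in an open set, $H^{\imath}$ must be (the germ along $\overline{H_{reg}}$ of) the unique irreducible analytic variety $V$, so $V$ is the required algebraic extension of $H^{\imath}$.

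The one step that requires care is the dimension count: one must verify that the generic fiber of $\pi_{1}|_{H^{\mathbb{C}}}$ has dimension exactly $n$, which is where Proposition \ref{codim2} (saying the $S$-degenerate locus has codimension at least two in $H^{\imath}$) is essential, as it ensures $S$-ordinary points are generic and the Segre-fiber dimension drops only on a proper subvariety.
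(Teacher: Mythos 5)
Your proposal is correct and follows essentially the same route as the paper: both project the algebraic extrinsic complexification $H^{\mathbb{C}}\subset\mathbb{P}^{N}\times\mathbb{P}^{N}$ (algebraic and irreducible by Proposition \ref{algebraic-complexification}) to $\mathbb{P}^{N}$ via $\pi_{1}$ and identify the image with $H^{\imath}$ by means of Proposition \ref{lema-hip alg}. The only difference is in how that identification is closed: the paper works with the cone $H_{\kappa}$ and applies Proposition \ref{lema-hip alg} at the vertex to get $\pi_{1}(H_{\kappa}^{\mathbb{C}})=H_{\kappa}^{\imath}$ as an equality of germs, which makes your fiber-dimension count unnecessary (note that your count, as written, only yields $\dim_{\mathbb{C}}V\geq n+1$ from the existence of $n$-dimensional fibers; the matching upper bound needs the complementary remark that \emph{every} fiber over $H_{\Delta}$ has dimension at least $n$, propagated to all of the irreducible $H^{\mathbb{C}}$ by semicontinuity and minimality of the complexification).
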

\begin{proof}
We associate to $H$ its  projective  cone $H_{\kappa}$, which is analytic and irreducible as a germ at $(\mathbb{C}^{N+1},0)$. Let $H_{\kappa}^{\mathbb{C}}$   denote its   complexification at
$(\mathbb{C}^{N+1}\times\mathbb{C}^{N+1},0)$. By Proposition \ref{lema-hip alg}, we have $\pi_1(H_{\kappa}^{\mathbb{C}})=H_{\kappa}^{\imath},$ where $H^{\imath}_{\kappa}$ is the   $\imath$-complexification of $H_{\kappa}$.
By Proposition \ref{algebraic-complexification},
  $H^{\mathbb{C}}\subset \mathbb{P}^{N}\times \mathbb{P}^{N}$   is   complex algebraic   and so is its
image $\pi^{\mathbb{P}}_1(H^{\mathbb{C}})\subset \mathbb{P}^{N}$  by  the projection
$\pi^{\mathbb{P}}_1: \mathbb{P}^{N}\times \mathbb{P}^{N} \to  \mathbb{P}^{N}$
in the first coordinate.
 Note that the cone associated with $\pi^{\mathbb{P}}_1(H^{\mathbb{C}})$ is   $(\pi^{\mathbb{P}}_1(H^{\mathbb{C}}))_{\kappa}=\pi_1(H_\kappa^{\mathbb{C}}) = H_{\kappa}^{\imath}.$
Finally, $H_{\kappa}^{\imath}$  is  the cone of an irreducible algebraic variety in $\mathbb{P}^{N}$  of dimension   $n+1$ which contains $H$. The result follows from the uniqueness of the
intrinsic complexification as a germ around $H$.
 \end{proof}

Next we look at Segre varieties of a Levi-flat  algebraic subset $H$. We identify $H$ with its algebraic cone $H_{\kappa}$ at $(\mathbb{C}^{N+1},0)$ and take a system of bihomogeneous generators $ \phi_1,..., \phi_k \in  \mathbb{C}[z,\bar{z}]$ for the ideal $\mathcal{I}(H_{\kappa})$.  By Proposition \ref{hi-alg-2}, the $\imath$-complexification $H^{\imath}_{\kappa}$ is algebraic. It then follows from Definition \ref{def-segre} that the Segre varieties of $H_{\kappa}$ are algebraic. An arbitrary Levi leaf of $H_{\kappa}$ contains S-ordinary points and, at each of these points,
it is a component of the corresponding Segre variety. This gives the following:

\begin{prop}
\label{cor-folhas-alg}
The Levi leaves of a    real algebraic   Levi-flat subset in $\mathbb{P}^{N}$ are algebraic.
\end{prop}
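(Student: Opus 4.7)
The plan is to identify every Levi leaf with an irreducible component of a Segre variety and then use the fact that, under the algebraic hypothesis, these Segre varieties are genuine algebraic subvarieties of $\mathbb{P}^{N}$.

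First I would set up the algebraic framework. By Proposition \ref{hi-alg-2}, the intrinsic complexification $H^{\imath}$ is an $(n+1)$-dimensional algebraic subvariety of $\mathbb{P}^{N}$. Working with the cone $H_{\kappa} \subset \mathbb{C}^{N+1}$, I would fix bihomogeneous generators $\phi_{1},\ldots,\phi_{k}\in\mathbb{C}[z,\bar{z}]$ of $\mathcal{I}(H_{\kappa})$, just as in the paragraph preceding Proposition \ref{algebraic-complexification}. For any fixed $p\in H^{\imath}$, each $\phi_{j}(z,\bar{p})$ is a homogeneous polynomial in $z$, so the zero locus $\{\phi_{1}(z,\bar{p})=\cdots=\phi_{k}(z,\bar{p})=0\}\subset\mathbb{P}^{N}$ is algebraic. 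Intersecting with the algebraic $H^{\imath}$ exhibits the Segre variety $\Sigma_{p}$ as an algebraic subvariety of $\mathbb{P}^{N}$.

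Next I would produce an $S$-ordinary point on an arbitrary Levi leaf $L$. By Proposition \ref{codim2}, the $S$-degenerate locus $S_{d}$ has complex codimension at least two in $H^{\imath}$, while $L$ has codimension one; hence $L\not\subset S_{d}$, so I can pick a point $p\in L$ with $p\notin S_{d}$. Proposition \ref{fornaes} (and the remark preceding the definition of $S$-degeneracy) gives $L\subset \Sigma_{p}$. At the $S$-ordinary point $p$ we have the dimension equality $\dim_{\mathbb{C}}\Sigma_{p}=n=\dim_{\mathbb{C}}L$, so the germ of $L$ at $p$ coincides with an irreducible component of the germ of $\Sigma_{p}$ at $p$, exactly as in the proof of Proposition \ref{closed-levi-leaves}.

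Finally I would globalize. Let $C$ denote the unique irreducible algebraic component of $\Sigma_{p}$ through $p$ whose germ at $p$ agrees with $L$; then $\dim C=n$ and $C$ is algebraic in $\mathbb{P}^{N}$. The set $L\cap C_{reg}$ is nonempty and open in $L$ by construction, and it is closed in $L$ by analytic continuation (the complex tangent distribution of the Levi foliation extends across $C_{reg}$, and $L$ is the maximal integral submanifold through $p$). Connectedness of $L$ then forces $L\subset C$, and comparing dimensions shows that the Zariski closure of $L$ is precisely $C$. Hence every Levi leaf extends to an $n$-dimensional algebraic subvariety of $\mathbb{P}^{N}$.

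The main obstacle is the last step: upgrading the germinal equality of $L$ and a component of $\Sigma_{p}$ at a single $S$-ordinary point into a global inclusion of the entire leaf into the algebraic component $C$. The cleanest route seems to be the analytic continuation argument just sketched, but one must be careful that the Levi foliation is well defined on the regular part of $C$; this is automatic because $C\subset H^{\imath}$ and the Levi distribution is intrinsic to $H^{\imath}$ away from its singular locus.
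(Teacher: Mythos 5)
Your proposal is correct and follows essentially the same route as the paper: algebraicity of the Segre varieties (via Proposition \ref{hi-alg-2} and the bihomogeneous generators of $\mathcal{I}(H_{\kappa})$), existence of $S$-ordinary points on every Levi leaf (Proposition \ref{codim2}), and identification of the leaf with a component of the Segre variety at such a point, exactly as in Proposition \ref{closed-levi-leaves}. The only divergence is that your final analytic-continuation step is superfluous: Proposition \ref{fornaes} already yields the \emph{global} inclusion $L\subset\Sigma_{p}$, so the closure of $L$ is an irreducible $n$-dimensional subvariety of the $n$-dimensional algebraic set $\Sigma_{p}$ and is therefore itself an irreducible, hence algebraic, component of it.
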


As we observed, when a Levi-flat subset  $H  \subset \mathbb{P}^{N}$ is  real analytic,
its $\imath$-complexification in principle is defined in a neighborhood of $\ov{H}_{reg}$. However, in certain cases, we can apply extension results of analytic varieties in order to prove that $H^{\imath}$ extends to an algebraic variety in $\mathbb{P}^N$. For instance, we can use  of the following theorem:

\begin{teo}\emph{(Chow, \cite{chow1969})}\label{chow}
Let $Z\subset \mathbb{P}^{N}$ be an algebraic set of dimension $n$ and $V$ be a connected neighborhood  of $Z$ in $\mathbb{P}^{N}.$ Then any analytic subvariety of dimension higher than $N-n$ in $V$ that intersects $Z$ extends algebraically to $\mathbb{P}^{N}.$
\end{teo}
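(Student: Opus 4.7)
The plan is to combine an analytic volume bound with Bishop's extension theorem and conclude by Chow's classical theorem that analytic subvarieties of $\mathbb{P}^{N}$ are algebraic. First I would reduce to the case of a pure-dimensional irreducible analytic subvariety $W\subset V$ of dimension $d>N-n$, and observe that the hypothesis is equivalent to $d+\dim Z>N$. By generic transversality on the Grassmannian, this means that a generic linear subspace $L\subset\mathbb{P}^{N}$ of codimension $d$ already meets $Z$ (in fact in dimension $n-d$ if $n\geq d$, or as a non-empty zero-dimensional set otherwise), and hence meets the open neighborhood $V$.

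Next I would use this to produce a Fubini--Study volume bound for $W$. For a generic $(N-d)$-plane $L$ chosen close to one meeting $Z$, the intersection $L\cap W$ is zero-dimensional and its points stay inside $V$; by semicontinuity of the intersection number on a Zariski-open subset of the Grassmannian, the cardinality $\#(L\cap W)$ is constant, so $W\cap V$ has a well-defined degree. The Crofton/Wirtinger formula then translates this into a locally finite $(2d)$-volume bound for $W$ as a subset of $\mathbb{P}^{N}$. At this point Bishop's extension theorem applies: a pure-dimensional analytic subset of an open subset of $\mathbb{P}^{N}$ with locally finite volume has analytic closure. Hence $\overline{W}\subset\mathbb{P}^{N}$ is an analytic subvariety, and Chow's theorem delivers the algebraic extension.

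The main obstacle I expect is in the volume/degree step. The strict dimension inequality $d>N-n$ is what prevents $W$ from spreading out uncontrollably near $\mathbb{P}^{N}\setminus V$: it forces any generic linear subspace transverse to $W$ to meet $V$ in a neighborhood of $L\cap Z$, so one can actually count intersections inside the given open set $V$ rather than having to go outside it. Verifying that these local intersection counts assemble into a global degree bound (and not merely a count near a single $L\cap Z$) is the heart of the argument and relies on the connectedness of the open stratum of the Grassmannian consisting of $L$'s meeting $Z$ transversely; this is where I would spend most of the technical work before invoking Bishop and Chow as essentially black boxes.
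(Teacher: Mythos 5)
The paper offers no proof of this statement: it is quoted verbatim as a theorem of Chow, with a citation to \cite{chow1969}, and is used as a black box in Proposition \ref{H-algebrico}. So there is no internal argument to compare yours with; judged on its own, your proposal has a genuine gap at its first geometric step. You assert that, because $d>N-n$, a generic linear subspace $L$ of codimension $d$ meets $Z$ (``as a non-empty zero-dimensional set'' when $n<d$). This is false: a generic $(N-d)$-plane meets an $n$-dimensional algebraic set $Z\subset\mathbb{P}^N$ if and only if $(N-d)+n\geq N$, i.e.\ $n\geq d$, which is a different inequality from $d+n>N$. The case $d>n$ is not only permitted by the hypothesis, it is exactly the case in which the paper applies the theorem: in Proposition \ref{H-algebrico} one takes $Z$ an algebraic leaf of dimension $n$ and $W=H^{\imath}$ of dimension $d=n+1$, so for instance in $\mathbb{P}^4$ with $n=2$ the relevant $L$ are lines, and a generic line misses the surface $Z$. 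When $d>n$ the generic $(N-d)$-plane transverse to $W$ misses $Z$, and if $V$ is a thin tubular neighborhood of $Z$ it misses $V$ altogether; the planes over which the Crofton formula averages therefore see nothing of $W$ near $Z$, and the mechanism ``count $\#(L\cap W)$ inside $V$'' never gets started.

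Even on the locus of planes that do enter $V$ and meet $W$ properly, the count $\#(L\cap W)$ is only locally constant on the open subset of the Grassmannian where the intersection remains proper and remains inside $V$; that set need not be dense or connected, and the count can diverge as $L$ approaches its boundary. Excluding that divergence is precisely the global degree bound that Bishop's theorem requires, and nothing in your argument supplies it. A further warning sign is that the hypothesis that the subvariety intersects $Z$ is never used, although it is part of the statement. The final two steps (Bishop's theorem, then the classical Chow theorem for closed analytic subsets of $\mathbb{P}^N$) are sound once a locally finite volume bound on all of $\mathbb{P}^N$ is in hand, but producing that bound is the heart of the theorem and is exactly what is missing; Chow's proof obtains the extension by entirely different means, exploiting the positive-dimensionality of the algebraic set $Z$ along which one extends, rather than by a volume estimate.
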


This allows us to state the following extension result for the $\imath$-complexification $H^{\imath}$:
\begin{prop}\label{H-algebrico}
Let $H\subset \mathbb{P}^{N}$ be a real analytic Levi-flat subset of   $\diml H =n$ such that $N>3$ and $n>\dfrac{N-1}{2}.$ If the Levi foliation has an algebraic leaf, then $H^{\imath}$ extends to an algebraic variety in $\mathbb{P}^{N}.$
\end{prop}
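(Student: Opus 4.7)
The plan is to apply Chow's extension theorem (Theorem \ref{chow}) directly to $H^{\imath}$, using the given algebraic Levi leaf as the ``algebraic anchor'' demanded by Chow's statement.

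First, I would fix an algebraic Levi leaf $L$ of the Levi foliation $\mathcal{L}$. By Definition \ref{sclf}, $L$ is a connected complex variety of complex dimension $n$; by hypothesis it coincides with (or extends to) an algebraic subset of $\mathbb{P}^{N}$, which I still call $L$. By Theorem \ref{Levi-viz-Hi}, the intrinsic complexification $H^{\imath}$ is realized as a complex analytic subvariety of dimension $n+1$ in some connected open neighborhood $V$ of $\overline{H_{reg}}$ in $\mathbb{P}^{N}$. Since $L \subset H_{reg} \subset H^{\imath}$, the set $V$ is a connected neighborhood of the algebraic set $L$ (if $V$ is not connected, I take its connected component containing $L$), and $H^{\imath} \subset V$ intersects $Z := L$.

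Next, I would check the dimensional hypothesis in Theorem \ref{chow}. With $Z = L$ of dimension $n$ and $H^{\imath}$ of dimension $n+1$, the required inequality $\dim H^{\imath} > N - \dim Z$ becomes
\[
n+1 > N - n, \qquad \text{i.e.\ } \ 2n + 1 > N, \qquad \text{i.e.\ } \ n > \tfrac{N-1}{2},
\]
which is precisely the standing hypothesis of the proposition. Therefore Chow's theorem applies and $H^{\imath}$ extends to an $(n+1)$-dimensional algebraic subvariety of $\mathbb{P}^{N}$.

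There is essentially no serious obstacle: the statement is designed as an immediate corollary of Chow's extension theorem once the geometric picture is in place, the entire content being the bookkeeping of dimensions that matches $\dim H^{\imath} = n+1$ against $N - \dim L = N - n$. The auxiliary assumption $N > 3$ is not needed in the argument itself; it merely prevents degenerate low-dimensional cases in the broader framework of the paper.
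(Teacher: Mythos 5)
Your proposal is correct and follows exactly the paper's own argument: the algebraic Levi leaf $L$ serves as the algebraic set $Z$ in Chow's Theorem \ref{chow}, and the inequality $n+1 > N-n$, equivalent to $n > (N-1)/2$, is precisely the dimensional hypothesis needed. The paper's proof is just a terser version of the same bookkeeping.
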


\begin{proof}
We have $\dim_\mathbb{C} L=n,$ where $L$ is the Levi leaf which supposed to be algebraic, and $\dim_\mathbb{C} H^{\imath}=n+1.$ Since $n>(N-1)/2,$ we find $\dim_\mathbb{C} H^{\imath} = n+1>N-n$.  The result then follows from Chow's Theorem.
\end{proof}

 A  foliation of codimension one in $\mathbb{P}^{N}$ tangent to an algebraic Levi-flat hypersurface has a  rational first integral \cite[Theorem 6.6]{jiri2012}. We can state a version of this result in  the context of  Levi-flat subsets. We consider a real analytic Levi flat subset $H \subset \mathbb{P}^{N}$ of
$\diml H = n$, invariant by an $n-$dimensional holomorphic foliation $\F$. By Proposition
\ref{fol-inv}, $H^{\imath}$ is invariant by $\F$. We will be mostly concerned with $\mathcal{F}^{\imath}:=\mathcal{F}|_{H^{\imath}},$ which is a codimension one foliation on $H^{\imath}$,
which in principle is a    singular  variety.
We  make use of the following  result on the integrability of foliations in projective manifolds:
\begin{teo} \emph{(X. G\'omex-Mont, \cite{gomez1989})}
\label{gm}
Let $\mathcal{F}$ be a singular holomorphic foliation  of   codimension $q$ on an irreducible projective manifold $M$. Assume that every leaf $L$ of $\mathcal{F}$ is a quasi-projective subvariety of $M$. Then there exist a projective manifold  $X$ of dimension $q$ and a rational map $f:M\rightarrow X$ such that the leaves of $\mathcal{F}$ are contained in the fibers of $f$.
\end{teo}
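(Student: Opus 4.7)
The plan is to parametrize the closures of the leaves of $\mathcal{F}$ by points of a Chow variety of $M$, and to take $f$ to be the resulting classifying map composed with a resolution of singularities. Set $n = \dim M$, so that each leaf has dimension $n-q$, and consider the Chow variety $\mathcal{C} = \mathcal{C}_{n-q}(M)$ of effective algebraic cycles of that dimension on $M$, which is a countable union of projective varieties. For each $p \in M \setminus \mathrm{Sing}(\mathcal{F})$, the hypothesis that the leaf $L_p$ is quasi-projective implies that its Zariski closure $\overline{L_p}$ is an irreducible projective subvariety of $M$ of pure dimension $n-q$, defining a point $[\overline{L_p}] \in \mathcal{C}$.

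The first step is to show that the classifying map
\[
\varphi\colon M \setminus \mathrm{Sing}(\mathcal{F}) \longrightarrow \mathcal{C}, \qquad p \longmapsto [\overline{L_p}],
\]
is holomorphic and has image contained in a single irreducible component of $\mathcal{C}$. Holomorphy comes from the local product structure of $\mathcal{F}$ at a regular point: in a foliation chart the plaque through $p$, and hence its global extension via the leaf, varies holomorphically with $p$, and taking Zariski closure transports this to a holomorphic map into the cycle space. Connectedness of $M \setminus \mathrm{Sing}(\mathcal{F})$ together with the discreteness of the components of $\mathcal{C}$ confines the image to one component, which is itself a projective variety.

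Next, let $Y$ be the Zariski closure of the image of $\varphi$ in that component, choose a resolution of singularities $\pi\colon X \to Y$ with $X$ a projective manifold of the same dimension as $Y$, and let $f\colon M \dashrightarrow X$ be the rational lift of $\varphi$. Two points on the same leaf $L$ have the same associated cycle $\overline{L}$, so $\varphi$ is constant on every leaf; thus every leaf of $\mathcal{F}$ is contained in a single fiber of $f$. For the dimension count, the generic fiber of $f$ contains a leaf of dimension $n-q$, and since $f$ is dominant the formula $\dim M = \dim X + \dim(\text{generic fiber})$ yields $\dim X = q$. The passage from $\varphi$ holomorphic on the regular locus to a rational map on all of $M$ is made by GAGA (promoting the holomorphic classifying map into a morphism of algebraic varieties) combined with the extension of meromorphic maps into projective manifolds across analytic subsets of codimension $\geq 2$, which absorbs $\mathrm{Sing}(\mathcal{F})$.

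The main obstacle I anticipate is the boundedness of the family $\{\overline{L_p}\}$ inside the Chow variety. A priori the degrees of the leaf closures with respect to a fixed polarization of $M$ could be unbounded, so $\varphi$ might jump across infinitely many irreducible components of $\mathcal{C}$, preventing the construction of a single projective $Y$. The key input for ruling this out is that leaves depend holomorphically on the transversal point at any regular point of $\mathcal{F}$, which gives local boundedness of the degrees; connectedness and irreducibility of $M$ then promote this to a uniform bound and confine $\varphi$ to a single irreducible component of $\mathcal{C}$, completing the construction.
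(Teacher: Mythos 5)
First, note that the paper offers no proof of this statement: it is quoted verbatim from G\'omez-Mont's article and used as a black box, so there is no internal argument to compare yours against. Your Chow-variety strategy is indeed the standard route to this theorem, but as written it has a genuine gap at the crucial step. The assertion that the classifying map $\varphi\colon p\mapsto[\overline{L_p}]$ is \emph{holomorphic} on $M\setminus\sing(\mathcal F)$ because ``the plaque through $p$ varies holomorphically'' is false: take the foliation of $\mathbb{P}^2$ by the fibers of a degree-$d$ pencil with a reducible member; at a point of a component of that member, $\overline{L_p}$ is the component alone, of degree $<d$, while nearby leaves close up to the full degree-$d$ fiber, so $\varphi$ jumps and is at best meromorphic. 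The local product structure controls plaques, not global leaf closures, and the passage ``take Zariski closure'' is exactly where continuity can be lost.

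The deeper problem is the one you flag yourself and then dismiss too quickly: uniform boundedness of $\deg\overline{L_p}$. Holomorphic dependence of plaques on the transversal gives no bound on how many plaques of a nearby leaf sit in a fixed chart, hence no ``local boundedness of degrees,'' and connectedness of $M$ cannot promote a bound that has not been established locally. This boundedness is essentially the content of the theorem, not a remark one can wave through. The standard repair (and, in substance, G\'omez-Mont's argument) is: show that the locus of $\mathcal F$-invariant cycles is a closed analytic subset of the cycle space; write $M\setminus\sing(\mathcal F)$ as the countable union over $d$ of the images of the incidence varieties over the degree-$\le d$ components of that locus; invoke a Baire-category argument to find one irreducible component $Y$ whose universal family dominates $M$; and only then obtain $f$ as the induced rational map $M\dashrightarrow Y$, with the fiber-dimension count you give identifying $\dim Y=q$. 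Without the analyticity of the invariant-cycle locus and the category argument, the construction of a single projective $Y$ does not go through.
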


We also need the following   generalization of Darboux-Jouanolou Theorem \cite{jouanolou1979}:
\begin{teo}\emph{(E. Ghys, \cite{ghys2000})}
\label{GHYS}
Let $\mathcal{F}$ be a singular holomorphic foliation of codimension one  on a smooth, compact and connected analytic complex manifold. If $\mathcal{F}$ has infinitely many closed leaves, then $\mathcal{F}$ has a meromorphic first integral and, therefore, all its leaves are closed.
\end{teo}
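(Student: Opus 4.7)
The approach follows the classical Darboux--Jouanolou strategy: parametrize the closed leaves by a positive-dimensional analytic family and extract a meromorphic first integral from the corresponding incidence variety. First I would promote each closed leaf $L$ of $\mathcal{F}$ to an analytic hypersurface of $M$. Since $L$ is closed in $M$ and its immersed submanifold structure misses $\mathrm{Sing}(\mathcal{F})$ (a subset of codimension at least two in $M$), the closure $\overline{L}$ acquires at worst an analytic piece inside $\mathrm{Sing}(\mathcal{F})$, and by the Remmert--Stein extension theorem $\overline{L}$ is an irreducible analytic hypersurface.

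Next I would embed the collection of closed leaves into the Barlet space $\mathcal{B}(M)$ of compact analytic cycles of $M$. The condition that a cycle be tangent to $\mathcal{F}$ (its smooth points having tangent spaces lying in the integrable distribution defining $\mathcal{F}$) is an analytic condition on $\mathcal{B}(M)$, so the locus $\mathcal{C}_{\mathcal{F}} \subset \mathcal{B}(M)$ of $\mathcal{F}$-tangent cycles is an analytic subset. Compactness of $M$ bounds the volume on each irreducible component of $\mathcal{C}_{\mathcal{F}}$, so these components are themselves compact. Since $\mathcal{F}$ has infinitely many closed leaves, the corresponding countable subset of $\mathcal{C}_{\mathcal{F}}$ must accumulate on some component, forcing the existence of a positive-dimensional irreducible component $T \subset \mathcal{C}_{\mathcal{F}}$ whose generic point represents a closed leaf.

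I would then form the incidence variety $\mathcal{Z} = \{(x,t) \in M \times T : x \in L_t\}$ with its natural projections $\pi_M : \mathcal{Z} \to M$ and $\pi_T : \mathcal{Z} \to T$. The map $\pi_M$ is surjective, because its image is an analytic subset of $M$ containing infinitely many hypersurfaces, hence is all of $M$; and its generic fiber is finite, because a generic point of $M$ lies in only finitely many closed leaves of the family (a full fiber of dimension $\geq 1$ over a generic point would collapse $T$ onto a subspace of leaves through that point). The correspondence $\pi_T \circ \pi_M^{-1} : M \dashrightarrow T$ is therefore meromorphic. Pulling back any non-constant meromorphic function on $T$ gives a meromorphic function $f$ on $M$ which is constant on a dense family of leaves; continuity of $f$ on the leaf space forces it to be constant on every leaf, providing the desired meromorphic first integral. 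The ``in particular'' assertion then follows: each leaf sits in a level set of $f$, which is an analytic hypersurface, and so every leaf is closed.

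The main obstacle is producing a non-constant meromorphic function on the component $T$. In Jouanolou's projective case $T$ is itself projective and meromorphic (even rational) functions abound; in the general compact analytic case the existence of such a function on $T$ is the heart of the matter and relies on a delicate Moishezon-type analysis of components of the Barlet space, which is Ghys's contribution. A secondary difficulty is handling the singular set $\mathrm{Sing}(\mathcal{F})$ carefully so that the incidence construction and the passage from ``constant on a dense family of leaves'' to ``first integral'' are not obstructed by the possible bad behavior of leaves near $\mathrm{Sing}(\mathcal{F})$.
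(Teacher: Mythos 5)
This theorem is quoted by the paper from Ghys's article \cite{ghys2000}; no proof is given in the text, so there is no internal argument to compare against, and I will measure your sketch against Ghys's published proof. Your route through the cycle space is not the one Ghys takes, and it contains a step that fails at the stated level of generality. You assert that ``compactness of $M$ bounds the volume on each irreducible component of $\mathcal{C}_{\mathcal{F}}$, so these components are themselves compact,'' and you then let the infinitely many closed leaves accumulate on a positive-dimensional component $T$. Compactness of the irreducible components of the Barlet space (Lieberman--Fujiki) is a theorem for manifolds in Fujiki's class $\mathcal{C}$ (e.g.\ K\"ahler or Moishezon), where a closed positive $(1,1)$-form makes the volume of a cycle a cohomological, hence locally constant, function on the cycle space; on an arbitrary compact complex manifold --- which is exactly the hypothesis here --- no such bound is available, the components of $\mathcal{C}_{\mathcal{F}}$ need not be compact, and infinitely many isolated points of $\mathcal{C}_{\mathcal{F}}$ can escape to infinity without producing any positive-dimensional family. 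This is precisely why the incidence-variety strategy, which does underlie G\'omez-Mont's Theorem \ref{gm} in the projective setting, is not how Ghys proceeds. A smaller remark: the difficulty you flag at the end --- producing a non-constant meromorphic function on $T$ --- is actually the least of your problems, since once $\pi_M$ is generically finite one gets $\dim T=1$, and a compact irreducible curve always carries non-constant meromorphic functions after normalization; the genuine obstruction sits one step earlier.

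Ghys's actual proof is cohomological, in the Darboux--Jouanolou tradition. Each closed leaf is completed to an invariant hypersurface; to a finite weighted collection of such hypersurfaces one associates the obstruction to gluing the local closed logarithmic $1$-forms of the type $\sum_i\lambda_i\,df_i/f_i$ into a global closed meromorphic $1$-form defining $\mathcal{F}$, and this obstruction lives in a finite-dimensional vector space. Infinitely many invariant hypersurfaces therefore force a kernel of dimension at least two, i.e.\ two linearly independent closed meromorphic $1$-forms $\eta_1,\eta_2$ with $\eta_i\wedge\omega=0$; writing $\eta_1=h\,\eta_2$ with $h$ meromorphic, closedness gives $dh\wedge\eta_2=0$, so $h$ is a non-constant meromorphic first integral, and all leaves lie in its level sets. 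To salvage your approach you would have to either restrict to class $\mathcal{C}$ manifolds or replace the compactness-of-components step by an argument of this cohomological type.
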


In order to apply the above theorems, we have to  desingularize   the $\imath$-complexification   $H^{\imath}$ using
 Hironaka's Dessingularization's Theorem
\cite{hironaka1964}:
there exists  a manifold $\tilde{H^{\imath}}$ and a proper bimeromorphic morphism
$\pi:\tilde{H^{\imath}}\rightarrow H^{\imath}$ such that:
\begin{enumerate}[(i)]
\item  $\pi:\tilde{H^{\imath}} \setminus (\pi^{-1}( \sing (H^{\imath})) \medskip
\rightarrow  H^{\imath} \setminus \sing(H^{\imath})$ is an isomorphism.
\item   $\pi^{-1}(\sing(H^{\imath}))$ is a simple normal crossing divisor.
 \end{enumerate}
Note that if the real analytic Levi-flat subset $H \subset \mathbb{P}^{N}$ is tangent to an abient foliation $\mathcal{F}$   on $\mathbb{P}^{N}$, then
  $\mathcal{F}^{\imath}$, being   the restriction of   $\mathcal{F}$ to $H^{\imath}$,   lifts by
 the desingularization map  to a foliation $\tilde{\mathcal{F}}^{\imath}$ on  $\tilde{H^{\imath}}$.

We then have the main result of this section:

\begin{prop}\label{teoremaA}
 Let $H\subset \mathbb{P}^{N}$ be a real analytic Levi-flat subset of $\diml H = n$ invariant by a holomorphic foliation $\mathcal{F}$ in $\mathbb{P}^{N}$. Suppose that the $\imath$-complexification $H^{\imath}$ extends to an algebraic variety in  $\mathbb{P}^{N}$ --- which happens, for instance, if  $N>3$ and $n> (N-1)/2$. If the Levi foliation $\LL$ has infinitely many algebraic leaves, then  $\mathcal{F}^{\imath} = \mathcal{F}|_{H^{\imath}}$ has  a rational first integral.
\end{prop}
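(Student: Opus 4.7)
The strategy is to transfer the problem to a smooth compact complex manifold by desingularization, apply Ghys' theorem to produce a meromorphic first integral there, and then descend it back to a rational first integral on $H^{\imath}$.

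First, I would ensure that $H^{\imath}$ is realized as an algebraic subvariety of $\mathbb{P}^{N}$. This is exactly the hypothesis in the general statement, and under the dimension assumption $N>3$, $n>(N-1)/2$ it is supplied by Proposition \ref{H-algebrico} (which needs only one algebraic Levi leaf, certainly available since $\mathcal{L}$ has infinitely many). Then I apply Hironaka's theorem quoted in the text to obtain a proper bimeromorphic morphism $\pi:\tilde{H^{\imath}}\to H^{\imath}$ with $\tilde{H^{\imath}}$ smooth, connected, compact, and projective, and exceptional divisor of normal crossings over $\sing(H^{\imath})$. The codimension-one foliation $\mathcal{F}^{\imath}=\mathcal{F}|_{H^{\imath}}$ then lifts to a singular holomorphic foliation $\tilde{\mathcal{F}}^{\imath}$ of codimension one on $\tilde{H^{\imath}}$, as noted in the excerpt preceding the statement.

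Second, I would feed infinitely many closed leaves into $\tilde{\mathcal{F}}^{\imath}$. By hypothesis, $\mathcal{L}$ admits infinitely many algebraic leaves $L_{k}$. By Proposition \ref{closed-levi-leaves} each $L_{k}$ is a closed analytic subvariety of $H^{\imath}$; since $\dim_{\mathbb{C}}L_{k}=n=\dim\mathcal{F}^{\imath}$ and $L_{k}$ is contained in a leaf of $\mathcal{F}^{\imath}$, each $L_{k}$ is in fact the closure of a leaf of $\mathcal{F}^{\imath}$. Pulling back by $\pi$ and using that $\pi$ is an isomorphism over $H^{\imath}\setminus\sing(H^{\imath})$, the strict transforms $\tilde{L}_{k}\subset\tilde{H^{\imath}}$ are pairwise distinct closed algebraic leaves of $\tilde{\mathcal{F}}^{\imath}$.

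Third, I apply Ghys' theorem \ref{GHYS} to $\tilde{\mathcal{F}}^{\imath}$ on the smooth compact connected manifold $\tilde{H^{\imath}}$: the existence of infinitely many closed leaves yields a meromorphic first integral $\tilde{R}:\tilde{H^{\imath}}\to\mathbb{P}^{1}$. Since $\tilde{H^{\imath}}$ is a smooth projective variety, every meromorphic function on it is rational, so $\tilde{R}$ is rational. Composing with $\pi^{-1}$ on the Zariski-open set $H^{\imath}\setminus\sing(H^{\imath})$ produces a rational map $R:H^{\imath}\dashrightarrow\mathbb{P}^{1}$ which is constant on the leaves of $\mathcal{F}^{\imath}$, hence a rational first integral of $\mathcal{F}^{\imath}$.

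The main obstacle I expect is the bookkeeping around $\sing(H^{\imath})$: verifying that the lift $\tilde{\mathcal{F}}^{\imath}$ really is a codimension-one singular holomorphic foliation on all of $\tilde{H^{\imath}}$ (not just away from the exceptional divisor), and checking that the strict transforms of the algebraic Levi leaves are genuinely distinct closed leaves, so that the hypothesis of Ghys' theorem is satisfied. The remaining steps --- lifting, applying Ghys, and descending --- are then essentially formal, and the passage from meromorphic to rational is immediate by projectivity of $\tilde{H^{\imath}}$.
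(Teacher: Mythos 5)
Your proposal is correct and follows the same skeleton as the paper's proof: resolve $H^{\imath}$ by Hironaka, lift $\mathcal{F}^{\imath}$ to $\tilde{\mathcal{F}}^{\imath}$, observe that the infinitely many algebraic Levi leaves give infinitely many closed leaves upstairs, and apply Ghys' theorem. The two arguments diverge only in the final step. The paper uses Ghys' theorem solely to conclude that \emph{all} leaves of $\tilde{\mathcal{F}}^{\imath}$, hence of $\mathcal{F}^{\imath}$, are compact (quasi-projective), and then invokes G\'omez-Mont's theorem (Theorem \ref{gm}) to manufacture a rational map $f:H^{\imath}\to X$ onto a curve whose fibers contain the leaves, composing with $X\to\mathbb{P}^1$ to finish. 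You instead keep the meromorphic first integral produced by Ghys, upgrade it to a rational function on $\tilde{H^{\imath}}$ by GAGA, and descend it through the birational map $\pi$. Your route is more economical (one classical theorem instead of two) and sidesteps a small delicacy in the paper, namely that Theorem \ref{gm} is stated for projective \emph{manifolds} while $H^{\imath}$ may be singular; the price is that you must know the resolution $\tilde{H^{\imath}}$ is itself projective, which does hold since Hironaka's resolution of a projective variety is a composition of blow-ups along smooth centers, but should be said explicitly (compactness alone, which is all the paper records, would not suffice for meromorphic $=$ rational). Your closing caveats about checking that the strict transforms are genuinely distinct closed leaves are exactly the points the paper also passes over quickly, and they are unproblematic here.
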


\begin{proof}
Let $\pi:\tilde{H^{\imath}}\rightarrow H^{\imath}$  be a desingularization map.
$H^{\imath}$ is compact and so is $\tilde{H}^{\imath}$. We lift $\mathcal{F}^{\imath}$  to an $n-$dimensional foliation $\tilde{\mathcal{F}}^{\imath}$  on $\tilde{H^{\imath}}$.
Our hypothesis gives that $\mathcal{F}^{\imath}$ has infinitely many closed leaves and thus
 the same holds for $\tilde{\mathcal{F}}^{\imath}$.
 By Theorem  \ref{GHYS},  $\tilde{\mathcal{F}}^{\imath}$ admits a meromorphic first integral in $\tilde{H^{\imath}}.$ So, all leaves of $\tilde{\mathcal{F}}^{\imath}$ are compact. Besides, their $\pi$-images  are compact leaves of $\mathcal{F}^{\imath}$ in $H^{\imath}$. Finally, by Theorem \ref{gm}, there exists a one-dimensional projective manifold $X$  and a rational map $f:H^{\imath}\rightarrow X$ whose fibers contain the leaves of $\mathcal{F}^{\imath}$. The rational first integral is obtained by composing $f$ with a  non-constant rational map $r:X \rightarrow \mathbb{P}^1$.
\end{proof}

When a Levi-flat subset $H\subset \mathbb{P}^{N}$ is algebraic,
assembling the conclusions of Propositions
\ref{hi-alg-2} and \ref{cor-folhas-alg}, the same argument of the proof of Proposition \ref{teoremaA}  gives
 the following integrability result:

\begin{cor}
 \label{p-indet-2-cor}Let $H\subset \mathbb{P}^{N}$ be an algebraic Levi-flat subset invariant by a holomorphic foliation $\mathcal{F}$. Then $H^{\imath}$ is algebraic and $\mathcal{F}^{\imath}$ has a rational first integral.
\end{cor}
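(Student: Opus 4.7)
The plan is to mirror the proof of Proposition \ref{teoremaA}, using that the algebraicity hypothesis on $H$ lets us bypass the dimensional constraint ($N>3$, $n>(N-1)/2$) that was only needed there to extend $H^{\imath}$ algebraically via Chow's theorem (Theorem \ref{chow}). So the work splits cleanly into two stages: first, upgrade $H^{\imath}$ and the Levi leaves from analytic to algebraic; second, run the desingularize--Ghys--G\'omez-Mont machinery to produce a rational first integral for $\mathcal{F}^{\imath}$.

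For the first stage, I would simply invoke Proposition \ref{hi-alg-2}: since $H$ is real algebraic, its $\imath$-complexification $H^{\imath}$ extends to an $(n+1)$-dimensional algebraic subvariety of $\mathbb{P}^{N}$. Then Proposition \ref{cor-folhas-alg} tells us that every Levi leaf of $H$ is algebraic, hence in particular closed. Because the Levi foliation $\mathcal{L}$ is a real codimension one foliation in $H_{reg}$ (the local model \eqref{levi-local-form} exhibits a one-real-parameter family of leaves $\{z_{n+1}=c,\ z''=0\}$), $\mathcal{L}$ has infinitely many leaves, so the hypothesis ``infinitely many algebraic leaves'' of Proposition \ref{teoremaA} is automatically satisfied.

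For the second stage, since $H^{\imath}$ is now a projective (hence compact) variety, I would apply Hironaka's theorem \cite{hironaka1964} to obtain a desingularization $\pi:\widetilde{H^{\imath}}\to H^{\imath}$ and lift the codimension one foliation $\mathcal{F}^{\imath}=\mathcal{F}|_{H^{\imath}}$ to a foliation $\widetilde{\mathcal{F}}^{\imath}$ on the smooth compact projective manifold $\widetilde{H^{\imath}}$. The algebraic Levi leaves pull back to infinitely many closed leaves of $\widetilde{\mathcal{F}}^{\imath}$, so Ghys's theorem (Theorem \ref{GHYS}) yields a meromorphic first integral for $\widetilde{\mathcal{F}}^{\imath}$; pushing down, every leaf of $\mathcal{F}^{\imath}$ is compact. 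A final application of G\'omez-Mont's Theorem \ref{gm} provides a rational map $f:H^{\imath}\to X$ onto a one-dimensional projective manifold whose fibers contain the leaves of $\mathcal{F}^{\imath}$, and composing with a non-constant rational map $X\to \mathbb{P}^{1}$ produces the desired rational first integral.

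The only mild obstacle is bookkeeping: one must check that the lifted foliation $\widetilde{\mathcal{F}}^{\imath}$ is still a genuine codimension one holomorphic foliation (so Ghys applies), and that a leaf of $\mathcal{F}^{\imath}$ is actually a quasi-projective subvariety of $H^{\imath}$ (so G\'omez-Mont applies). Both are routine: the former because $\pi$ is an isomorphism outside $\mathrm{Sing}(H^{\imath})$ and $\widetilde{\mathcal{F}}^{\imath}$ extends holomorphically across the exceptional divisor since $\widetilde{H^{\imath}}$ is smooth; the latter because the leaves of $\mathcal{F}^{\imath}$ are fibers of the meromorphic first integral that has just been produced. With these checks in place the corollary follows verbatim from the closing argument of Proposition \ref{teoremaA}.
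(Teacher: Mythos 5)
Your proof is correct and follows essentially the same route as the paper: the authors likewise obtain the corollary by combining Proposition \ref{hi-alg-2} (algebraicity of $H^{\imath}$) and Proposition \ref{cor-folhas-alg} (algebraicity of the Levi leaves) and then repeating verbatim the desingularization--Ghys--G\'omez-Mont argument of Proposition \ref{teoremaA}. Your extra bookkeeping remarks (lifting the foliation, quasi-projectivity of the leaves) are consistent with what the paper leaves implicit.
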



\section{Rational functions and Levi-flat subsets}
 \label{integration1-section}


Let $R$ be a rational function on $\mathbb{P}^{N}$ and $S\subset \mathbb{C}$ be a real algebraic curve. Then $\overline{R^{-1}(S)}$ is a Levi-flat hypersurface \cite[Prop. 5.1]{jiri2012}. An equivalent result --- with a similar proof --- can be stated    in
the context of this paper:

\begin{prop}
Let $X\subset \mathbb{P}^{N}$ be an irreducible $(n+1)$-dimensional algebraic variety, $R$ be a rational function on $X$ and $S\subset \mathbb{C}$ be a real algebraic curve. Then the set $\overline{R^{-1}(S)}$ is an algebraic Levi-flat subset of
$\cl{L}$-dimension $n$ whose $\imath$-complexification is $X$.
\end{prop}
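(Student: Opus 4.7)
The plan is to verify the three required properties in turn --- algebraicity, Levi-flatness of the correct dimension, and identification of $H^{\imath}$ --- by working at generic points where everything is transverse, then invoking E. Cartan's normal form to recognize the trivial model \eqref{levi-local-form}.

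First, fix a real polynomial $s(w,\bar w)$ defining $S\subset\mathbb{C}$. Writing $R=P/Q$ with $P,Q$ homogeneous of the same degree on the cone of $X$, the condition $s(R(p),\overline{R(p)})=0$ becomes bihomogeneous in $(z,\bar z)$ after clearing denominators, so $H=\overline{R^{-1}(S)}$ is cut out by a real algebraic equation on $X$; together with the fact that $X$ is algebraic, this shows $H$ is algebraic in $\mathbb{P}^N$. Away from the indeterminacy locus of $R$ and the critical locus, $R\colon X\dashrightarrow\mathbb{P}^1$ is a holomorphic submersion onto an open set of $\mathbb{C}$, so on a Zariski-open subset of $X_{\rm reg}$ the preimage of the real curve $S$ has real codimension one in $X$, giving $\dim_{\mathbb{R}} H=2(n+1)-1=2n+1$.

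Next, I would pin down the Levi structure at a generic point $p\in H_{\rm reg}$. Choose $p$ such that $X$ is smooth at $p$, $R$ is a submersion at $p$, $c_0=R(p)$ is a smooth point of $S$, and $p$ is smooth on $H$. Extend $R$ to local holomorphic coordinates $(z_1,\ldots,z_n,z_{n+1})$ of $X$ at $p$ with $z_{n+1}=R$, and then complete to holomorphic coordinates $(z',z'')\in\mathbb{C}^{n+1}\times\mathbb{C}^{N-n-1}$ of $\mathbb{P}^N$ so that $X=\{z''=0\}$. Since $S$ is a smooth real analytic curve near $c_0$, the one-variable case of the Cartan normal form \eqref{formalocal-hlf} provides a biholomorphic change of the coordinate $z_{n+1}$ after which $S=\{\operatorname{Im}(\tilde z_{n+1})=0\}$ locally; under this change $H$ is exactly the trivial model \eqref{levi-local-form}. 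By Proposition \ref{est.loc.slf} this identifies $H$ as a Levi-flat subset of $\mathcal{L}$-dimension $n$, whose Levi leaves are the connected components of the fibers $R^{-1}(c)$ for $c\in S$.

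Finally, I identify $H^{\imath}$ via the uniqueness in Theorem \ref{Levi-viz-Hi}. The local model above shows that at every point of $H_{\rm reg}$ the germ of $X$ is the unique complex $(n+1)$-dimensional variety containing the germ of $H_{\rm reg}$. Hence $H^{\imath}\subset X$ as germs along $\overline{H_{\rm reg}}$, and since both are irreducible of the same dimension $n+1$ (with $X$ irreducible by hypothesis), equality follows.

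The only delicate point is the reduction to the Cartan normal form: one has to discard the indeterminacy locus of $R$, the ramification locus of $R$, and the finitely many singular points of $S$, verifying that their preimages all sit in $H_{\rm sing}$ (so that Definition \ref{sclf} is applied on a genuinely generic open subset of $H$). This dimensional bookkeeping — together with the observation that $H_{\rm reg}$ is still Zariski-dense in $H$ — is the step I expect to require the most care in a complete write-up; everything else is a direct unwinding of definitions and of \eqref{levi-local-form}.
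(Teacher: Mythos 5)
Your proof is correct, and it follows essentially the same route as the argument the paper relies on (it gives no proof itself, deferring to the ``similar proof'' of \cite[Prop.~5.1]{jiri2012}): clear denominators in $s(R,\bar R)=0$ to get algebraicity, straighten $S$ at a generic point to recognize the local model \eqref{levi-local-form} with Levi leaves the fibers $R^{-1}(c)$, $c\in S$, and use uniqueness in Theorem \ref{Levi-viz-Hi} to identify $H^{\imath}$ with $X$. The ``delicate point'' you flag is in fact harmless: the CR dimension is bounded above by $n$ everywhere on $H_{\rm reg}$ by the real dimension count $\dim_{\R}T_pH_{\rm reg}=2n+1$, is generically $n$ by your submersion computation, and is upper semicontinuous, so it equals $n$ on all of $H_{\rm reg}$, while integrability is a closed condition on a real analytic distribution and so propagates from the dense open set of good points.
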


Our goal in this section is  to prove that, with the additional hypothesis that
the Levi-flat subset is tangent to a foliation in the ambient space, a reciprocal of this result can
be proved by adapting the techniques of \cite[Theorem 6.1]{jiri2012}.

Fix the usual notation $\ind(F)$ for the indeterminacy set of a meromorphic function $F$.
We have the following local portrait of Levi-flat subsets tangent to the levels of meromorphic functions:

 \begin{prop}\label{p-indeterminação}
 Let $H$ be a germ of irreducible real analytic Levi-flat subset  of $\diml H = n$ at $(\mathbb{C}^{N},0)$. Suppose that $F$ is a non-constant meromorphic function in $H^{\imath}$, such that ${\rm codim}_\mathbb{C}\, \ind(F)\geq 2$, which is constant along the Levi leaves. If $0\in H \cap \ind(F)$, then there exists an algebraic one-dimensional subset $S\subset \mathbb{C}$ such that $H\subset \overline{F^{-1}(S)}.$
\end{prop}

\begin{proof}
Since the proof goes as that of   \cite[Lemma 5.2]{jiri2012}, we just review its main steps and verify that they adapt to our context.
It is sufficient  to consider the   case  $n=1$, for which
  $\dim_\mathbb{C}H^{\imath}=2$ and $0\in H^{\imath}$ is an isolated point of
indeterminacy of $F$ --- the general case  $n>1$ reduces to this   particular one by
cutting $H$ by an $(n-1)$-plane $\alpha$ in general position, as we did in Proposition \ref{codim2}.
 Write $F=f/g$,   where $f$ and $g$ are holomorphic functions in $H^{\imath}$,
without common factors, and consider the map
\[ \Phi: z \in H^{\imath} \mapsto  \left(f(z),g(z)\right) \in \mathbb{C}^{2} \]
The crucial fact is that  $\Phi(H)$ is semianalytic,   an open subset of
an analytic variety $K$ of the same dimension. In fact,   the map
\[\Psi^\mathbb{C}: (z,w) \in H^{\imath}\times (H^{\imath})^* \mapsto  (f(z),g(z), f^{*}(w),g^{*}(w)) \in \mathbb{C}^{4}\]
is finite and thus, by the  Finite Map Theorem,    $\Psi^\mathbb{C}(H^\mathbb{C})$ is an analytic variety.
Therefore, considering
\[\tilde{\Phi}: z \in H^{\imath} \mapsto (f(z),g(z),\overline{f(z)},\overline{g(z)}) \in \mathbb{C}^{4},\]
we have that $\tilde{\Phi}(H) \subset \Psi^\mathbb{C}(H^\mathbb{C})\cap \Delta$ is   open and thus   it is semianalytic. Note that  $\Psi^\mathbb{C}(H^\mathbb{C})\cap \Delta \subset \C^{4}$ can be defined by functions that depend only on the two first coordinates. Thus,
 taking the projection
$\pi: \C^{4} \to \C^{2}$, $\pi (z_1,z_2,z_3,z_4)=(z_1,z_2)$,
we have that $\Phi(H)=\pi(\tilde{\Phi}(H))\subset \pi_1(\Psi^\mathbb{C}(H^\mathbb{C})\cap \Delta) = K$ is also semianalytic.

Note that $\Phi(H)$ contains infinitely many complex lines through the   origin and thus, if
  $r(z,\bar{z})=\sum_{j,k} r_{jk}(z,\bar{z})$ is a defining function for $K$, written in
 bihomogeneous terms of bidegree $(j,k)$, then
$r_{j,k}(z,\bar{z})\equiv 0$ for all $(j,k)$, meaning  that
 $K$ is    real algebraic.
Next, project the
  algebraic set
$$\{(z,\xi)\in \mathbb{C}^2\times \mathbb{C}; z\in K \ \text{and} \ \xi z_2=z_1\}$$
in the  $\xi$-variable. By   Tarski-Seidenberg Theorem  \cite{dries1998}, this projection is   semialgebraic, so it lies in a   one-dimensional  algebraic set $S\subset \mathbb{C}.$ Thus $K\subset \overline{G^{-1}(S)}$. Since   $\phi(H)\subset K$ and  $F=G\circ \phi$, we conclude  that $H\subset \overline{F^{-1}(S)}.$
 \end{proof}

Remark that if
$X \subset \mathbb{P}^{N}$ is an algebraic complex variety of $\dim_\mathbb{C}X \geq 2,$ then any rational function in $X$ admits points of indeterminacy. This gives us   the following:

\begin{prop}\label{p-indet-2}
 Let $\mathcal{F}$ be a holomorphic   foliation in $\mathbb{P}^{N}$   tangent to a real analytic Levi-flat subset $H$ of $\diml H = n$. Suppose that $\mathcal{F}^{\imath}$ has a
 rational first integral $R.$ Then there exists a real algebraic curve $S\subset \mathbb{C}$ such that $H\subset \overline{R^{-1}(S)}.$
 \end{prop}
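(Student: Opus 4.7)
The plan is to reduce the statement to its local counterpart, Proposition~\ref{p-indetermina��o}, applied at a suitable indeterminacy point of $R$ lying on $H$. The three ingredients I need are: (i) $R$ is constant along every Levi leaf, (ii) the indeterminacy locus $\ind(R)$ has complex codimension at least two in $H^{\imath}$, and (iii) $H$ actually meets $\ind(R)$. For (i), every Levi leaf is an open subset of a leaf of $\mathcal{F}$, hence of $\mathcal{F}^{\imath}$, and by the first-integral hypothesis $R$ is constant along such leaves wherever it is defined. For (ii), since $R$ is a rational function on the algebraic variety $H^{\imath}$ of complex dimension $n+1 \geq 2$, the base locus is nonempty of codimension at least two, as noted in the remark preceding this proposition.

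The crux is (iii), namely producing a point $p_{0} \in H \cap \ind(R)$. I argue via the foliated structure: the closure $\overline{L}$ in $H^{\imath}$ of any Levi leaf $L$ is an $n$-dimensional complex analytic subvariety on which $R$ is constant, equal to some value $c_{L} \in \mathbb{C}$. As the Levi leaves are parameterized by a one-real-dimensional transversal in $H_{reg}$, the values $c_{L}$ form a nontrivial real-analytic arc in $\mathbb{C}$. Because closures of distinct fibers of the rational map $R \colon H^{\imath} \dashrightarrow \mathbb{P}^{1}$ can only intersect along $\ind(R)$, the continuous one-parameter family of leaf-closures must accumulate on $\ind(R)$, and that accumulation locus lies in $\overline{H_{reg}} \subset H$, producing the desired $p_{0}$.

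Once $p_{0}$ is fixed, I translate it to the origin and invoke Proposition~\ref{p-indetermina��o}: its hypotheses are all satisfied, and it yields a real algebraic curve $S \subset \mathbb{C}$ such that $H \subset \overline{R^{-1}(S)}$ as germs at $p_{0}$. Since $R$ is rational on $\mathbb{P}^{N}$ and $S$ is real algebraic, the set $\overline{R^{-1}(S)}$ is a real algebraic (and hence real analytic) subvariety of $\mathbb{P}^{N}$, so the germ inclusion at $p_{0}$ propagates by real-analytic continuation to each irreducible component of $H$ through $p_{0}$; repeating the argument at indeterminacy points on the remaining components (and using irreducibility of $H$ when available) gives the global inclusion $H \subset \overline{R^{-1}(S)}$. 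The principal obstacle is step (iii): the dimension count forces $\ind(R) \neq \emptyset$ in $H^{\imath}$, but the real codimension-one inclusion $H \subset H^{\imath}$ does not automatically entail $H \cap \ind(R) \neq \emptyset$, and this intersection must be extracted from the interaction between the Levi foliation on $H$ and the fiber structure of $R$.
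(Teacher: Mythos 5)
Your overall strategy is exactly the paper's: locate a point of $H\cap\ind(R)$, apply Proposition \ref{p-indetermina��o} there to get the curve $S$ locally, and then globalize using the fact that the leaf closures all pass through that point and $R$ is constant on each of them. Steps (i), (ii) and the globalization are fine (the paper globalizes slightly more directly: each leaf closure $L_\ell$ meets the neighborhood where $H\subset\overline{R^{-1}(S)}$, so its constant value $R(L_\ell)$ lies in $S$ and hence all of $L_\ell$ lies in $\overline{R^{-1}(S)}$; no appeal to real-analytic continuation of germ inclusions is needed).

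The one place where your argument does not go through as written is the crux you yourself flag, step (iii). From ``closures of distinct fibers can only intersect along $\ind(R)$'' you infer that the one-parameter family of leaf closures ``must accumulate on $\ind(R)$''; but that implication is backwards -- nothing in what you have written forces distinct fibers to meet at all, and a priori the leaf closures could be pairwise disjoint (as they are for a base-point-free pencil), in which case they accumulate nowhere near $\ind(R)$. The correct statement, which is what the paper uses, is the reverse containment: writing $R=f/g$, every level closure $\overline{R^{-1}(c)}\supset\{f-cg=0\}$ contains the base locus $\{f=g=0\}=\ind(R)$. Hence every point $p\in\ind(R)$ lies in $\bigcap_\ell \overline{L_\ell}$, where the $L_\ell$ are the closures of the Levi leaves (levels of $R$), and therefore $p\in\overline{H_{reg}}\subset H$. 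Combined with $\ind(R)\neq\emptyset$ (the remark preceding the proposition), this produces the point $p_0$ directly, with no accumulation argument. So the gap is small and fixable in one line, but as stated your justification of (iii) is not valid.
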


 \begin{proof}
Write $\overline{H_{reg}}=\cup_{\ell}L_{\ell}$, where $L_{\ell}$ are irreducible complex analytic subvarieties given by the closures of Levi leaves of $\mathcal{F},$ which are levels of the rational function $R.$ Taking  $p\in \ind(R)$, then $p\in H$, since $p\in \cap\overline{L_{\ell}}$.
Applying Proposition \ref{p-indeterminação} at $p,$ we find a one-dimensional algebraic subset $S\subset \mathbb{C}$ such that, locally, $H\subset\overline{R^{-1}(S)}.$ Since $\overline{H_{reg}}=\cup_{\ell}L_{\ell}$ and $p\in L_{\ell}$ for every $\ell$, then $H\subset\overline{R^{-1}(S)}.$

\end{proof}

With this proposition, we accomplish the proof of   Theorem \ref{int-racional-R}:
\begin{proof} [Proof of Theorem \ref{int-racional-R}]
By Proposition \ref{teoremaA},  $\mathcal{F}^{\imath}=\mathcal{F}|_{H^{\imath}}$ has a rational first integral
in   $H^{\imath}$, say $R$.   The result then follows from   Proposition \ref{p-indet-2}.
\end{proof}

In a similar way, the combination of Proposition \ref{p-indet-2} and the Corollary \ref{p-indet-2-cor}   gives:

\medskip
\noindent
\begin{cor}
\label{coro-im-inv-R}
 Let $H\subset \mathbb{P}^{N}$ an algebraic Levi-flat subset  invariant by a foliation $\mathcal{F}$ in $\mathbb{P}^{N}$.
Then there exist a rational function $R$ in $H^{\imath}$ and a real algebraic curve $S\subset \mathbb{C}$ such that  $H\subset \overline{R^{-1}(S)}.$
\end{cor}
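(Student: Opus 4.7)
The plan is to obtain the statement as a direct two-step consequence of results already established in the paper. The corollary combines an integrability conclusion with a level-set description, and both ingredients are now in place.

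First, I would invoke Corollary \ref{p-indet-2-cor}, which applies precisely to the situation at hand: $H\subset\mathbb{P}^N$ is an algebraic Levi-flat subset invariant by the holomorphic foliation $\mathcal{F}$. That corollary immediately yields two facts — the $\imath$-complexification $H^{\imath}$ is algebraic, and the restricted foliation $\mathcal{F}^{\imath}=\mathcal{F}|_{H^{\imath}}$ admits a rational first integral $R$ on $H^{\imath}$. Note that $\mathcal{F}^{\imath}$ is a codimension-one foliation on $H^{\imath}$ by Proposition \ref{fol-inv} together with the fact that $\dim_{\mathbb{C}} H^{\imath} = \diml H + 1$, so the notion of rational first integral makes sense.

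Second, I would feed this rational first integral $R$ into Proposition \ref{p-indet-2}. The hypotheses of that proposition are satisfied: $\mathcal{F}$ is a holomorphic foliation on $\mathbb{P}^N$ tangent to the real analytic Levi-flat subset $H$ (the algebraic case is in particular the real analytic case), and $\mathcal{F}^{\imath}$ has the rational first integral $R$ just produced. The conclusion of Proposition \ref{p-indet-2} is exactly what we need: there exists a real algebraic curve $S\subset\mathbb{C}$ such that $H\subset\overline{R^{-1}(S)}$.

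There is no real obstacle here beyond verifying that the hypotheses of the two cited results truly apply in the algebraic setting; the work has been done in the preceding sections. In particular, I would check that $H^{\imath}$ having the dimension required by Proposition \ref{p-indet-2} (via Proposition \ref{fol-inv}) and being algebraic (via Proposition \ref{hi-alg-2}) poses no issue. Stringing the two applications together yields the statement, completing the proof.
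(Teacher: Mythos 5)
Your proposal matches the paper's own argument: the text preceding the corollary states explicitly that it follows by combining Proposition \ref{p-indet-2} with Corollary \ref{p-indet-2-cor}, which is precisely the two-step chain you describe. The proof is correct and takes essentially the same route.
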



\section {A comment on Brunella's integration techniques}
\label{section-brunella}


 In this section we explain how the techniques of \cite{brunella2011} can be adapted in order to prove Theorem \ref{teo-bru-adapt}.
Recall the conditions of its statement: we have    a germ of real analytic Levi-flat  subset $H$
 at $(\mathbb{C}^{N},0)$,  of $\diml H = n$ and $\text{codim}_{\C}\, \sing(H^{\imath}) \geq 2$, invariant by
a germ of holomorphic foliation   $\mathcal{F}$ of  dimension $n$.
We start by remarking that, by applying  the Transversality Lemma (stated a proved in the Appendix) and taking transverse plane sections,   we can suppose that  $\diml H = 1$ and that  $H^{\imath}$ has an isolated singularity at $0 \in \mathbb{C}^{N}$.
We have   the following
Lemma:

\begin{lem}
Let  $H$ be a real analytic Levi-flat subset  of $\cl{L}$-dimension $1$ at $(\mathbb{C}^{N},0)$ invariant by a germ of holomorphic foliation
 $\mathcal{F}$.
Then,  for each $p\in H^\imath\setminus \{0\},$ the mirror of Segre variety $\Sigma_p^{*}\subset (H^{\imath})^*$ is a non-empty curve invariant by the mirror foliation $\mathcal{F}^{\imath *}.$ Besides, if $p$ and $q$ are on the same leaf of $\mathcal{F}^\imath$, then $\Sigma_{p}^{*} =\Sigma_{q}^{*}.$
\end{lem}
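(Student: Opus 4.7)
The plan is to deduce both assertions from a single structural fact: $H^{\mathbb{C}}$ is invariant under the product foliation $\mathcal{F}^{\imath}\times\mathcal{F}^{\imath *}$ on $H^{\imath}\times H^{\imath *}$. Once this is in hand, the two claims fall out by inspecting the fibers of the projection $\pi_{1}:H^{\mathbb{C}}\to H^{\imath}$, which by \eqref{segre-pi1} are exactly the mirror Segre varieties $\Sigma_{p}^{*}$.

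To establish the product invariance, I would take a germ of holomorphic vector field $v$ at $(\mathbb{C}^{N},0)$ generating $\mathcal{F}$ and show that the lifts $(v,0)$ and $(0,v^{*})$ on $\mathbb{C}^{N}\times\mathbb{C}^{N*}$ are tangent to $H^{\mathbb{C}}$. In the Wirtinger formalism, tangency of $v$ to the real-analytic set $H$ amounts to the vanishing of the complex-valued function $v\cdot\bs{\phi}(z,\bar{z})$ identically on $H$, where $\bs{\phi}$ is a generating map. Replacing $\bar{z}$ throughout by the independent complex variable $w$, that is, complexifying the corresponding identity, transforms this into the vanishing of $v_{z}\cdot\bs{\phi}^{\mathbb{C}}(z,w)$ on $H^{\mathbb{C}}$, where $v_{z}$ denotes $v$ acting only on the first factor. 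This is precisely the tangency of $(v,0)$ to $H^{\mathbb{C}}$, and the analogous statement for $(0,v^{*})$ follows by mirroring.

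With this in hand, for the first assertion, non-emptiness of $\Sigma_{p}^{*}=\pi_{1}^{-1}(p)$ for $p\in H^{\imath}\setminus\{0\}$ is immediate from Proposition~\ref{lema-hip alg} at the origin, which gives $\pi_{1}(H^{\mathbb{C}}_{(0,0)})=H^{\imath}_{0}$. For the dimension count, $\dim_{\mathbb{C}}H^{\mathbb{C}}=3$ and $\dim_{\mathbb{C}}H^{\imath}=2$, so generic fibers of $\pi_{1}$ are curves; fibers of dimension two appear exactly over $S_{d}$, which by Proposition~\ref{codim2} and the reduction to $\sing(H^{\imath})=\{0\}$ lies in $\{0\}$. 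Hence $\Sigma_{p}^{*}$ is a non-empty curve for every $p\neq 0$. Its $\mathcal{F}^{\imath *}$-invariance follows because $(0,v^{*})$ is tangent to $H^{\mathbb{C}}$ and, being purely vertical, is also tangent to the fiber $\pi_{1}^{-1}(p)=\{p\}\times\Sigma_{p}^{*}$, on which it restricts to a generator of $\mathcal{F}^{\imath *}$.

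For the second assertion, take $p$ and $q$ on the same leaf $L$ of $\mathcal{F}^{\imath}$. Given $\bar{r}\in\Sigma_{p}^{*}$, the point $(p,\bar{r})$ lies in $H^{\mathbb{C}}$, and tangency of the horizontal field $(v,0)$ to $H^{\mathbb{C}}$ forces its integral trajectory $L\times\{\bar{r}\}$ through $(p,\bar{r})$ to stay inside $H^{\mathbb{C}}$; in particular $(q,\bar{r})\in H^{\mathbb{C}}$, that is $\bar{r}\in\Sigma_{q}^{*}$. Hence $\Sigma_{p}^{*}\subset\Sigma_{q}^{*}$, and symmetry gives equality. The step I expect to be the real work is this complexification transfer of tangency from $\A_{N \R}$ to $\mathcal{O}_{2N}$; once that transfer is firmly in place, everything else is straightforward fiberwise bookkeeping.
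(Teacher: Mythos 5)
Your proposal is correct and follows essentially the same route as the paper: both rest on showing that $H^{\mathbb{C}}$ is invariant by the product foliation $\mathcal{F}^{\imath}\times\mathcal{F}^{\imath*}$ and then reading off non-emptiness, the dimension of the fibers $\pi_1^{-1}(p)=\Sigma_p^*$ (using ${\rm codim}\, S_d\geq 2$), their $\mathcal{F}^{\imath*}$-invariance, and the constancy of $\Sigma_p^*$ along leaves. The only cosmetic difference is in how the product invariance is obtained: the paper observes that $\tang(\mathcal{F}^{\imath}\times\mathcal{F}^{\imath*},H^{\mathbb{C}})$ contains $H_\Delta$ and invokes the minimality of the complexification, whereas you complexify the tangency identity $v\cdot\bs{\phi}\in\mathcal{I}(H)\cdot\A_N$ directly; the two arguments are equivalent.
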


\begin{prova}
The fact that $\Sigma_p^{*}\subset (H^{\imath})^*$ is non-empty for every
 $p\in H^\imath$ sufficiently near $0 \in \C^{N}$ follows from Proposition $\ref{lema-hip alg}$.
Since $\text{codim}_\mathbb{C} \, S_d\geqslant 2$ and $\dim H^\imath = 2$, we can suppose that   $0 \in \C^{N}$ is the only
Segre degenerate point, implying  that   $\Sigma^{*}_p$ is a   curve in $H^{\imath *}$
for each $p\in H^{\imath}\setminus \{0\}$.
Take the two-dimensional   foliation
$\mathcal{F}^{\imath}\times \mathcal{F}^{ \imath  *}$
in $H^\imath  \times  H^{\imath *} $ whose leaf through
$(p,q^*) \in (H^\imath \setminus \{0\})\times (H^{\imath *} \setminus \{0\}) $
is   $L_{p,q^*}=L_p\times L^*_{q^*}$, where $L_p$ denotes the leaf of $\F$ through $p$. Consider the analytic complex set of tangencies
between $\mathcal{F}^{\imath}\times\mathcal{F}^{\imath *}$ and $H^\mathbb{C} \subset H^\imath  \times  H^{\imath *}$,
denoted by $Tang(\mathcal{F}^{\imath}\times\mathcal{F}^{\imath *},H^\mathbb{C}) \subset H^\mathbb{C}$. Since $H^{\Delta}\subset Tang(\mathcal{F}\times\mathcal{F}^*,H^\mathbb{C})$, the minimality of the complexification implies that
 $$(H^{\Delta})^\mathbb{C}=H^{\mathbb{C}}= Tang(\mathcal{F}\times\mathcal{F}^*,H^\mathbb{C}).$$
Denote, as before $\pi_1: H^{\mathbb{C}} \subset H^{\imath} \times H^{\imath *}  \to H^{\imath}$
the projection in the first coordinate. Then, for each $p\in H^{\imath} \setminus \{0\}$, the fiber   $\pi_1^{-1}(p)= \Sigma_p^*$ is a one-dimensional analytic set
tangent to $\mathcal{F}\times \mathcal{F}^{*}$.  Thus $\Sigma_p^*$ is invariant by
$\mathcal{F}^{\imath *}$ and is composed by a finite union of   leaves of $\mathcal{F}^{*}$.
It follows that, for a fixed
leaf $L$ of $\mathcal{F}^{\imath}$, the inverse image $\pi_1^{-1}(L) \subset H^{\mathbb{C}}$ is
  invariant by $\mathcal{F}^{\imath}\times \mathcal{F}^{ \imath  *}$ and has the form    $\pi_1^{-1}(L)=L\times \bigcup_{\lambda \in \Lambda} L_{\lambda}^{*}$,
where the $L_\lambda^*$'s are leaves of $\mathcal{F}^{\imath *}$
 and   $\Lambda$ is a finite set.
  In particularly, if $p$ and $q \in L,$ we have $\Sigma_p^*= \{p\}\times \bigcup_{\lambda \in \Lambda} L_{\lambda}^{*} $ and $\Sigma_q^*= \{q\}\times \bigcup_{\lambda \in \Lambda} L_{\lambda}^{*}.$
Identifying these   with $\bigcup_{\lambda \in \Lambda} L_{\lambda}^{*}$, we obtain $\Sigma_p^*= \Sigma_q^*.$
\end{prova}

 Theorem \ref{teo-bru-adapt}  is  a straight consequence of the
  proposition below, for which the
above lemma is a key ingredient. It  restates   Propositions 2 and 4 of \cite{brunella2011} and its proof follows the very same steps as those in Brunella's paper. The only difference is that here we should also take into account the desingularization divisor of the $\imath$-complexification $H^{\imath}$. The hypothesis on the codimension of $\sing(H^{\imath})$ is   needed in order to apply Levi's extension theorem for meromorphic functions.

\begin{prop}\label{teo-int-1-W_p}
Let $\mathcal{F}$ be a germ of one-dimensional holomorphic foliation at $(\mathbb{C}^{N},0)$   tangent to a germ of analytic real Levi-flat subset $H$ of $\diml H = 1$. Suppose that the $\imath$-complexification $H^{\imath}$ has an isolated singularity at origin and that one of the two following conditions is satisfied:

\begin{enumerate}
\item For every $p\in H^{\imath}\setminus \{0\},$ the mirror of Segre variety  $\Sigma_p^*$ is a proper analytic curve in $H^{\imath *}$ passing through the origin;
\item For every  $p\in H^{\imath},$ the mirror of Segre variety $\Sigma_p^*$
 is a proper analytic curve in $H^{\imath*}$ passing through the origin when $p=0;$
\end{enumerate}
Then $\mathcal{F}^{\imath}$ has a  first integral that is purely meromorphic in case (1) and holomorphic in case (2).
\end{prop}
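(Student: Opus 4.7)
The plan is to adapt Brunella's geometric argument from \cite{brunella2011}, with the modification that here the $\imath$-complexification $H^{\imath}$ plays the role that Brunella's smooth ambient $(\mathbb{C}^2,0)$ played in the local integrability theorem for Levi-flat hypersurfaces. The decisive structural input is the preceding Lemma: for each $p\in H^{\imath}\setminus\{0\}$, the mirror Segre curve $\Sigma_p^{*}\subset H^{\imath *}$ is a proper $\mathcal{F}^{\imath *}$-invariant analytic curve, and it depends only on the $\mathcal{F}^{\imath}$-leaf through $p$. Thus the assignment $L\mapsto \Sigma_L^{*}$ is a well-defined correspondence from leaves of $\mathcal{F}^{\imath}$ to invariant curves of $\mathcal{F}^{\imath *}$.

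First I would desingularize both surfaces via Hironaka, obtaining $\pi:\tilde{H}^{\imath}\to H^{\imath}$ and $\pi^{*}:\tilde{H}^{\imath *}\to H^{\imath *}$, with exceptional divisors $D$ and $D^{*}$ over the origin, and lift $\mathcal{F}^{\imath}$ and $\mathcal{F}^{\imath *}$ to one-dimensional foliations $\tilde{\mathcal{F}}$ and $\tilde{\mathcal{F}}^{*}$ on these smooth surfaces. In case \emph{(1)}, every curve $\Sigma_p^{*}$ passes through $0\in H^{\imath *}$, so its strict transform $\tilde{\Sigma}_p^{*}\subset \tilde{H}^{\imath *}$ meets $D^{*}$ in finitely many points. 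Tracking how these intersection points vary with the leaf $L$, and blowing up further if necessary to separate the branches of the Segre family, yields a rational map
\[
\Phi:\tilde{H}^{\imath}\dashrightarrow D^{*}
\]
which is constant on the leaves of $\tilde{\mathcal{F}}$. Composing $\Phi$ with a non-constant rational function on a component of $D^{*}$ (a tree of rational curves) produces a meromorphic first integral for $\tilde{\mathcal{F}}$ and therefore, pushing down by $\pi$, for $\mathcal{F}^{\imath}$ away from the origin.

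In case \emph{(2)}, by hypothesis $\Sigma_p^{*}$ avoids $0\in H^{\imath *}$ for $p\neq 0$, while $\Sigma_0^{*}$ is the unique Segre curve through the origin. A fixed transversal curve to $\tilde{\mathcal{F}}^{*}$ near $0$ now intersects each $\Sigma_p^{*}$ in a point depending holomorphically on the leaf of $\mathcal{F}^{\imath}$, directly producing a holomorphic first integral, rather than one with poles along the indeterminacy locus of $\Phi$. In both cases the function obtained is a priori defined only on $H^{\imath}\setminus\{0\}$; invoking Levi's extension theorem, which applies precisely because $\sing(H^{\imath})$ has complex codimension at least two in $H^{\imath}$, we extend across the singular point to obtain the desired first integral on $H^{\imath}$.

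The main obstacle I anticipate is the analytic bookkeeping needed to check that the correspondence $L\mapsto \Sigma_L^{*}$ descends to a genuinely meromorphic (respectively holomorphic) map after desingularization, and in particular that different leaves of $\mathcal{F}^{\imath}$ are separated by their associated Segre curves through their intersection with the exceptional divisor $D^{*}$. Controlling the possible coincidences of Segre curves in a neighborhood of the singular point of $H^{\imath}$—an issue absent in Brunella's original argument because his ambient space was smooth—is the essentially new input, and it is precisely here that the hypothesis that $0$ is an isolated singularity of $H^{\imath}$, together with the preceding Lemma's identification of Segre curves with unions of finitely many $\mathcal{F}^{\imath *}$-leaves, becomes crucial.
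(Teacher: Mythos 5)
Your proposal follows exactly the route the paper takes: the paper gives no detailed argument for this proposition, saying only that it restates Propositions 2 and 4 of \cite{brunella2011}, that its proof ``follows the very same steps'' as Brunella's, and that the only new ingredients are the desingularization divisor of $H^{\imath}$ and Levi's extension theorem, made applicable by $\mathrm{codim}_{\mathbb{C}}\,\sing(H^{\imath})\geq 2$. Your sketch --- the leafwise-constant correspondence $L\mapsto\Sigma_L^{*}$ from the preceding Lemma, the blow-up and the trace of the Segre curves on the exceptional divisor in case (1), the transversal argument in case (2), and the extension across the singular point --- is a faithful expansion of that same plan.
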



\section{Examples}
\label{section-examples}


Let $Z$ be a real analytic Levi-flat hypersurface    in a  complex manifold $X$ of $\dim_{\C}X = n+1$.
Let $\mathbb{P}T^{*}X$ be the cotangent bundle projectivization, which is a $\mathbb{P}^{n}$-bundle over $X$ whose dimension  is $N=2n+1$. Denote by $\rho$ the projection $\mathbb{P}T^{*}X\to X$.
 The regular part $Z_{reg}$ of $Z$ can be lifted to $\mathbb{P}T^{*}X$, since, for any $z\in Z_{reg}$,  $$T^{\mathbb{C}}_zZ_{reg}=T_z Z_{reg}\cap  J(T_z Z_{reg})\subset T_z X$$
is a  complex hyperplane.
Let $H_{reg}$ be the lifting of $Z_{reg}$ in $\mathbb{P}T^{*}X$. Fix $y\in\overline{H_{reg}}$ such that $\rho(y)=z\in\overline{Z_{reg}}$.
It follows from \cite{brunella2007} that there exists a neighborhood $V\subset \mathbb{P}T^{*}X$ of $y$ and a germ  of complex variety $Y_{y}$ at $y$ of dimension $n+1$ containing $\overline{H_{reg}}$ on $\mathbb{P}T^{*}X$.
 We have that $H=\overline{H_{reg}}$ is a germ at $y$ of Levi-flat subset of $\diml H = n$ on
 $M = \mathbb{P}T^{*}X$. The gluing of the local varieties $Y_{y}$ produces its
 $\imath$-complexification $H^{\imath}$. By this procedure,   any real analytic Levi-flat
hypersurface in a complex manifold $X$ induces a real analytic Levi-flat subset in $\mathbb{P}T^{*}X$.

\par When $X=\mathbb{P}^{(n+1)}$,
its projectivized cotangent bundle is isomorphic to the incidence variety
$$\Upsilon =\{(p,\alpha)\in\mathbb{P}^{n+1}\times\check{\mathbb{P}}^{n+1}; p\in \alpha\},$$
where $\check{\mathbb{P}}^{n+1}$ denotes the  parameter space of all hyperplanes in $\mathbb{P}^{n+1}$
(see  \cite[p. 27]{pereira2015}). Therefore, when considering a real analytic Levi-flat hypersurface $Z$ in $\mathbb{P}^{n+1}$, what we get is a real analytic Levi-flat subset $H$ in $\Upsilon$. However $\Upsilon$ is not a complex projective space and  our main results on global   integrability cannot be applied in this situation.

A canonical way to generate Levi-flat subsets is by intersecting Levi-flat hypersurfaces with complex analytic subvarieties. The
  examples of real analytic Levi subsets we present below are based on this principle.
\begin{example}
{\rm
	Let $H=\{(z_{1},z_{2},z_{3},z_{4})\in\mathbb{C}^4;\bar{z}_{3}z_{2}-\bar{z}_{2}z_{3}=0,\,\,\,z_{4}=0\}$. Then $H$ is a real analytic Levi-flat subset in $\mathbb{C}^4$, with degenerate singularities along the $z_{1}$-axis. The leaves of the Levi foliation are
	$L_c=\{z_{3}=z_{2}c,\,\,z_{4}=0\}$ for $c\in\mathbb{R}$. Note that the $\imath$-complexification of $H$ is the hyperplane $H^{\imath}=\{z_{4}=0\}$.
	On the other hand, since $H$ is a complex cone in $\mathbb{C}^4\setminus\{0\}$, we get that $H$ induces a Levi-flat subset in $\mathbb{P}^3$ that satisfies the hypothesis of Theorem \ref{int-racional-R}. The foliation $\mathcal{F}$ given by the polynomial 1-form $\omega=z_{2}dz_{3}-z_{3}dz_{2}$ defines
	a holomorphic foliation on $\mathbb{P}^3$ tangent to $H$. Moreover, $\mathcal{F}$ has a rational first integral $R=z_{3}/z_{2}$, which clearly defines a rational first integral on $H^{\imath}$.
}
\end{example}

\begin{example}
{\rm
In $\mathbb{C}^4$ with coordinates  $(z_{1},z_{2},z_{3},z_{4})$,  take $$H=\{z_{1}^2\bar{z}_{3}^2-z_{1}\bar{z}_{3}|z_{2}|^2+z_{1}z_{3}\bar{z}_{2}^2-2|z_{1}|^2|z_{3}|^2+
\bar{z}_{1}\bar{z}_{3}z_{2}^2-z_{3}\bar{z_{1}}|z_{2}|^2+z_{3}^2\bar{z_{1}}^2=0,\,\, z_{4}=0\}.$$
Then $H$ is a real analytic Levi-flat subset  foliated by the 2-planes
$$L_c=\{z_{1}+cz_{2}+c^2z_{3}=0,\,\, z_{4}=0\}$$
 for $c\in\mathbb{R}$. Again, the $\imath$-complexification is $H^{\imath} =\{z_{4}=0\}$.
Naturally, $H$ defines a real analytic Levi-flat subset in $\mathbb{P}^3$ but, in this case, $H$ is not invariant by an
ambient holomorphic foliation. Note that, by elimination of $c$ in the system of equations
	\[ \left\{ \begin{array}{ll}
z_{1}+cz_{2}+c^2z_{3}=0     & \\
dz_{1}+cdz_{2}+c^2dz_{3}=0 &
	\end{array} \right. \]
	we obtain a holomorphic 2-web tangent to $H$ on $\mathbb{P}^3$.
}
\end{example}

\begin{example}
{\rm
We  present next a real analytic non-algebraic Levi-flat subset of $\mathbb{P}^3$ of $\LL$-dimension 1, having
an algebraic $\imath$-complexification   and containing
 infinitely many algebraic leaves in its  Levi foliation. However, it is not invariant by a global holomorphic foliation on $\mathbb{P}^3$.
 This   shows that, in   Theorem \ref{int-racional-R}, the assumption   of the existence of a global foliation is essential in order to  get  semialgebricity. We  adapt an  example  in \cite{lebl2015}, whose construction is summarized in the
  following lemma:
\begin{lem}[\cite{lebl2015}]
\label{lebl_lemma}
Let $S\subset\mathbb{R}^2$ be a connected compact real analytic curve without singularities. Let $\tilde{H}$ be the complex cone defined by
$$\tilde{H}=\{(z_0,z_1,z_2)\in\mathbb{C}^3; \, z_0=z_1x+z_2y\,\, \text{\rm for}\,\,(x,y)\in S\}\cup\{z\in\mathbb{C}^3; \, z_1\bar{z}_2=\bar{z}_1z_2\}.$$
Then $\tilde{H}$ is a real analytic Levi-flat hypersurface in $\mathbb{C}^3\setminus\{0\}$ whose canonical projection   $\sigma(\tilde{H})$ is a real analytic Levi-flat hypersurface in $\mathbb{P}^2$.
Besides, if $S$ is not contained in any proper real algebraic curve in $\mathbb{R}^2$, then $\sigma(\tilde{H})$ is not algebraic.
\end{lem}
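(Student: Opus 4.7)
The plan is to follow the construction of \cite{lebl2015}. The first piece of $\tilde H$ is a real one-parameter family of complex hyperplanes $\Pi_{(x,y)} = \{z_0 = z_1 x + z_2 y\}$ with $(x,y)$ ranging over $S$, and the second piece is the degeneracy locus $E = \{z \in \C^3\setminus\{0\};\, z_1 \bar z_2 = \bar z_1 z_2\}$ on which this family fails to be transverse to the real structure of $\C^3$.

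First, I would produce a real analytic defining equation for $\tilde H$. Off of $E$, the linear system $z_0 = z_1 x + z_2 y$, $\bar z_0 = \bar z_1 x + \bar z_2 y$ admits a unique real solution
\begin{equation*}
x(z) = \frac{z_0 \bar z_2 - \bar z_0 z_2}{z_1 \bar z_2 - \bar z_1 z_2}, \qquad y(z) = \frac{\bar z_0 z_1 - z_0 \bar z_1}{z_1 \bar z_2 - \bar z_1 z_2},
\end{equation*}
given by real analytic functions of $z$. If $s \in \A_{2\R}$ is a local real analytic defining function for $S$, then $\tilde H$ is cut out, after clearing an appropriate power of $z_1 \bar z_2 - \bar z_1 z_2$, by a single real analytic equation in $(z,\bar z)$. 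Frobenius integrability of the Levi distribution is then tautological on $\tilde H_{reg}$: the Levi leaves are precisely the complex $2$-planes $\Pi_{(x_0,y_0)}$ with $(x_0,y_0)\in S$, so the distribution of maximal complex tangencies is automatically integrable of complex dimension two.

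Both components of $\tilde H$ are manifestly invariant under the diagonal $\C^*$-action on $\C^3 \setminus \{0\}$: each $\Pi_{(x,y)}$ is a linear subspace through the origin, while $E$ is defined by an equation bihomogeneous of bidegree $(1,1)$ in $(z,\bar z)$. Hence $\tilde H$ is a real analytic complex cone, and its projection $\sigma(\tilde H) \subset \mathbb{P}^2$ is a real analytic Levi-flat hypersurface whose Levi leaves are the projective lines $\sigma(\Pi_{(x,y)}\setminus \{0\})$ parameterized by $S$.

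For the non-algebraicity assertion, I would argue by contrapositive. If $\sigma(\tilde H)$ were algebraic in $\mathbb{P}^2$, then by the correspondence between real algebraic subvarieties of $\mathbb{P}^N$ and their cones used in Section \ref{integration-section} (see also \cite[Prop.~2.1]{jiri2012}), $\tilde H \cup \{0\}$ would be real algebraic in $\C^3$. Restricting a polynomial defining equation to a generic real affine slice in which $(z_1,z_2)$ is fixed and applying the inversion formulas for $x(z)$ and $y(z)$ above, one would recover $S$ as the zero set of a nonzero real polynomial on $\R^2$, contradicting the hypothesis on $S$. The most delicate point of this plan is to verify that the union of the complex-plane family and the degeneracy locus $E$ is exactly the zero set of a single real analytic function, with no spurious components, so that the Levi foliation extends coherently across $E$ as a genuine Levi-flat hypersurface in $\C^3\setminus\{0\}$.
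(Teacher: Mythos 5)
First, a point of reference: the paper does not prove this lemma at all --- it is quoted from \cite{lebl2015} as a summary of Lebl's construction, so there is no internal argument to compare yours against. Judged on its own terms, your outline reproduces the correct geometric picture (the family of planes $\Pi_{(x,y)}$, the degeneracy locus $E=\{\im(z_1\bar z_2)=0\}$, the Cramer inversion $x(z),y(z)$, tautological integrability of the Levi distribution on a union of complex planes, homogeneity and descent to $\mathbb{P}^2$, and the contrapositive slicing argument for non-algebraicity). But there is a genuine gap exactly at the point you yourself flag as ``the most delicate'': real analyticity of $\tilde H$ as a set across $E$. Your proposed mechanism --- substitute the rational functions $x(z),y(z)$ into a real analytic defining function $s$ of $S$ and ``clear an appropriate power of $z_1\bar z_2-\bar z_1 z_2$'' --- only makes sense when $s$ is a polynomial. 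For a general convergent power series $s$, the composite $s(x(z),y(z))$ is singular along $E$ in a way that no finite power of the denominator can cure. Since the whole interest of the lemma is the case where $S$ is \emph{not} algebraic, i.e.\ precisely where $s$ cannot be taken polynomial, the step collapses in the only regime that matters.

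To see where the difficulty concentrates, write $a=\im(z_0\bar z_2)$, $b=-\im(z_0\bar z_1)$, $c=\im(z_1\bar z_2)$, so that $x=a/c$, $y=b/c$ and $\tilde H=\Psi^{-1}\bigl(C_S\cup\{c=0\}\bigr)$ for $\Psi=(a,b,c)$, where $C_S\subset\R^3$ is the cone over $S$ placed in $\{c=1\}$. Near a point of $E$ where $(a,b)\neq(0,0)$ the quotient $(x,y)$ escapes the compact set $S$, so $\tilde H$ locally coincides with $E$ and analyticity is clear; the problem is at points of $E\setminus\{0\}$ where $a=b=c=0$ (e.g.\ real points such as $(1,1,1)$), because the cone over a non-algebraic curve is \emph{not} analytic at its vertex, and one must show that adjoining $\{c=0\}$ and pulling back by $\Psi$ nevertheless yields an analytic set rather than a merely semianalytic one. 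That is the actual content of Lebl's result and your outline does not supply it. (Two smaller remarks: the Levi leaves of $\tilde H$ also include the planes $\{z_1=tz_2\}$, $t\in\R\cup\{\infty\}$, foliating the component $E$, not only the $\Pi_{(x,y)}$; and in the non-algebraicity argument you should justify that the defining polynomial does not vanish identically on the chosen slice --- the affine line $\{z_1=1,\,z_2=i\}$ works, since it avoids $E$ and meets $\Pi_{(x,y)}$ exactly at $z_0=x+iy$.) The honest repair is the one the paper makes: cite \cite{lebl2015} for the analyticity statement.
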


Let us now  take the projection
$\upsilon: \mathbb{C}^4\setminus\{0\}\to\mathbb{C}^3\setminus\{0\}$ defined by  $\upsilon(z_0,z_1,z_2,z_3)=(z_0,z_1,z_2)$
and the real analytic complex cone  defined by
$$H'=\upsilon^{-1}(\tilde{H})\cap\{(z_0,z_1,z_2,z_3)\in\mathbb{C}^4\setminus\{0\}; \, z_0z_3 - z_1z_2=0\}.$$  Hence   $H=\sigma(H')$ is a real analytic subvariety in $\mathbb{P}^3$.  We have that $H\subset\mathbb{P}^3$ is Levi-flat with $\diml H = 1$ and
   its intrinsic complexification is the quadric $Q \subset \mathbb{P}^3$ defined by $z_0z_3 - z_1z_2 =0$.
  Moreover, if we
 pick $S \subset \mathbb{R}^2$ real   analytic    but non-algebraic, we obtain    a real analytic   non-algebraic Levi-flat subset  $H\subset\mathbb{P}^3$.

\par Finally, we assert  that   $H$ is not tangent to a one-dimensional holomorphic foliation  on $\mathbb{P}^3$. In fact, without loss of generality and possibly translating  $S$, we assume that for all $x\in\mathbb{R}$ small enough, there exists at least two distinct points $y_{1},y_{2}\in\mathbb{R}$ such that $(x,y_{1}), (x,y_{2})\in S$.
Given such a $x \neq 0$, there are at least two distinct leaves of the Levi-flat subset $H$ passing through $[x:1:0:0] \in H$, corresponding the hyperplanes of equations
 $z_{0} = z_{1} x + z_{2} y_{1}$ and $z_{0} = z_{1} x + z_{2} y_{2}$. Then, around these points, the Levi foliation   cannot be tangent to an ambient   holomorphic foliation.

 \begin{center}
\begin{overpic}[scale=.5]
{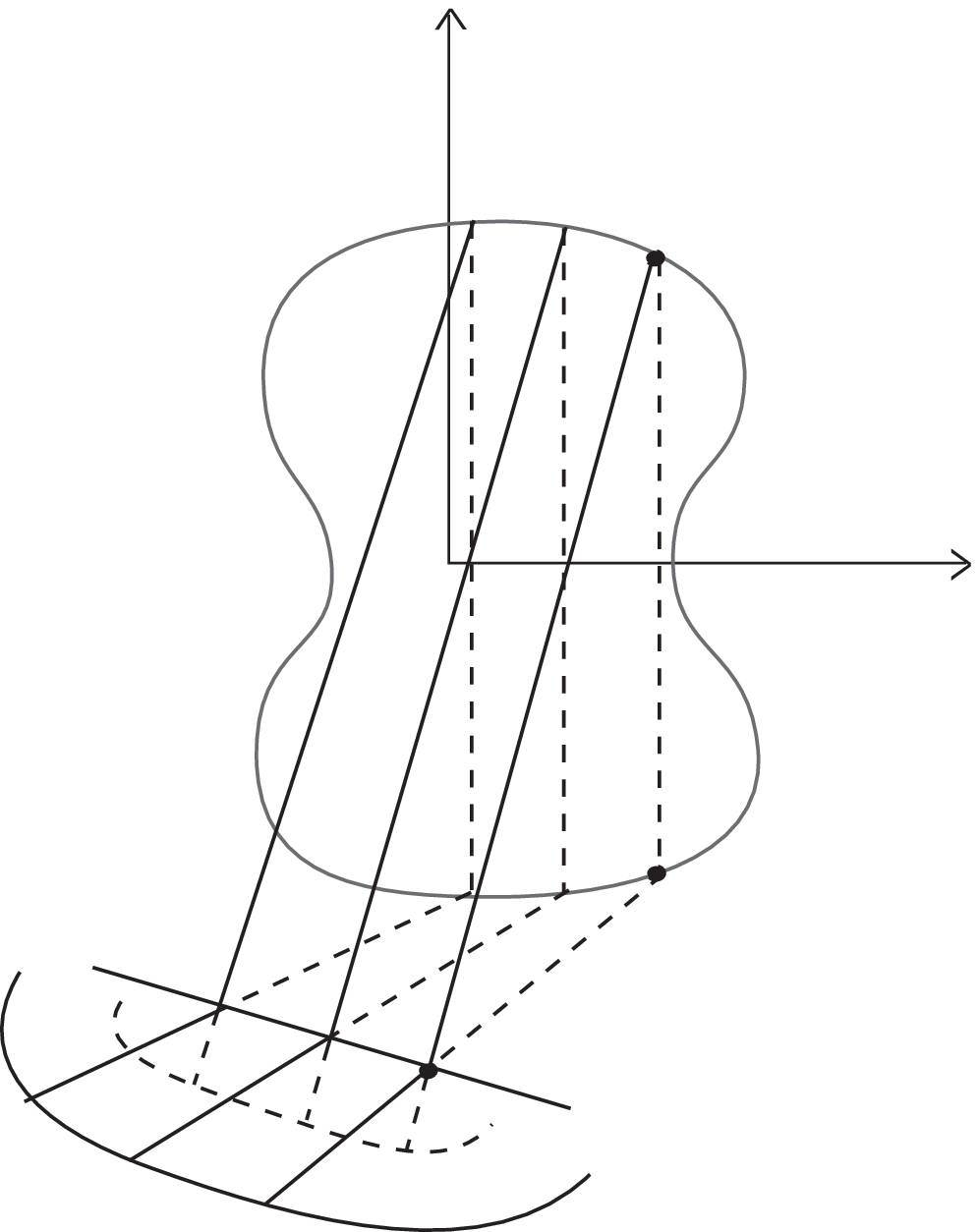}
\put (100,145) {\small $(x,y_1)$}
\put (115,130) {\small $S$}
\put (100,45) {\small $(x,y_2)$}
\put (72,24) {\small $[x:1:0:0]$}
\put (-15,30) {\small $H$}
\end{overpic}
\end{center}

}\end{example}


\appendix
\setcounter{secnumdepth}{0}
\section{Appendix}

Let $M$ be an $N$-dimensional complex manifold whose cotangent sheaf is $\Omega_{M} = \cl{O}(T^{*}M)$.
An $n$-dimensional holomorphic foliation $\F$ on $M$, where $1 \leq n < N$, is the object
defined by   an analytic coherent subsheaf $\cl{C}$ of $\Omega_{M}$ of rank $N-n$
satisfying the following properties (see \cite{suwa1998} for details):
\begin{enumerate}[(i)]
\item  $d \cl{C}_{p} \subset (\Omega_{M} \wedge \cl{C})_{p}$ for every $p \in M \setminus \sing(\cl{C})$ (integrability condition);
\item  $\sing(\Omega_{M} / \cl{C})$ is a set of codimension two. This  is   the \emph{singular set} of $\cl{F}$ and denoted by
$\sing(\cl{F})$.
\end{enumerate}
We call $\cl{C}$   the {\em conormal sheaf} of $\cl{F}$.
Recall that the singular set of a coherent sheaf is the set of points where its stalks fail  to be
free modules over the structural sheaf.
Outside $\sing(\cl{F})$, the conormal sheaf   is  the sheaf of sections of a rank $N-n$
vector subbundle  of $T^{*}M$, defining an integrable holomorphic distribution of subspaces of dimension $N-n$ on  $T^{*}M$ and, thus, a regular   holomorphic foliation of dimension $n$ on $M$.
Then, since ${\rm codim}_{\C}\, \sing(\cl{F}) \geq 2$, the foliation $\F$ is locally induced by
holomorphic   $(N-n)$-forms which are locally decomposable outside $\sing(\cl{F})$ and satisfy the
integrability condition.
We emphasize that  our definition  does not ask   $\cl{F}$ to be a \emph{reduced} foliation. By definition, this happens
when $\cl{C}$ is a \emph{full} sheaf,
that is, whenever $U \subset M$ is  open  and $\omega$ is a holomorphic section of $\Omega_{M}$ over $U$
that is also a section of  $\cl{C}$ over $U  \setminus \sing(\cl{F})$, then it is a section of
$\cl{C}$ over $U$.

We finish by proving a transversality lemma that has been used in  Theorem \ref{teo-bru-adapt}.
First a definition. Let $\F$ be   a germ of singular holomorphic foliation of dimension $n$ at the origin of $M = \C^{N}$
with conormal sheaf   $\cl{C}$,
 where $1< n < N$. Let $\alpha$ be a germ of hyperplane 
 through  $0 \in \C^{N}$
and denote by $\Omega_{\alpha}$  its cotangent sheaf.   We say that $\alpha$ is in \emph{general position} with or \emph{transverse} to $\F$ if the singular set of   $(\Omega_{M} /\cl{C})|_{\alpha} \cong \Omega_{\alpha} /(\cl{C}|_{\alpha})$
  has codimension at least two.
Thus, $\cl{C}|_{\alpha}$ is the conormal sheaf of a foliation  of dimension $n-1$ in $(\alpha,0) \cong (\C^{n-1},0)$ that will be denoted by $\F|_{\alpha}$.
\begin{lem*}[Transversality] Let $\F$ be a germ of singular holomorphic foliation of dimension $n$ at $(\C^{N},0)$.
Then the set of hyperplanes through $0 \in \C^{N}$ transverse to $\F$ form a generic subset
in the Grassmannian ${\rm Gr}_{0}(N-1,N) \cong {\mathbb P}^{N-1}$.
\end{lem*}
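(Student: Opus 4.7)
My plan is to show directly that the set of non-transverse hyperplanes is a proper analytic subvariety of $\mathbb{P}^{N-1}$, so that the transverse ones form a generic subset.  I will split the non-transversality condition into two pieces: $\alpha$ is non-transverse iff either $\alpha \cap \sing(\F)$ fails to have codimension $\geq 2$ in $\alpha$, or the tangency locus $\tang(\F,\alpha) := \{p \in \alpha \setminus \sing(\F) : T_p\F \subset \alpha\}$ does; I will treat the two resulting sets of bad hyperplanes separately.

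For the first piece, since $\sing(\F)$ has codimension $\geq 2$ in $M=\C^N$, I would consider each positive-dimensional irreducible component $V$ of the germ of $\sing(\F)$ at $0$ and let $L_V\subset\C^N$ be its linear hull.  The hyperplanes $\alpha$ containing $V$ are precisely those containing $L_V$, forming a linear subvariety of codimension $\dim L_V\geq 1$ in $\mathbb{P}^{N-1}$.  The finite union over components is thus proper, and outside it $\alpha\cap\sing(\F)$ already has codimension $\geq 2$ in $\alpha$.

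The bulk of the work concerns the tangency locus.  I would introduce the incidence variety
\[
\mathcal{T} \;=\; \bigl\{(p,[\ell])\in(M\setminus\sing(\F))\times\mathbb{P}^{N-1} \,:\, \ell(p)=0,\ \ell|_{T_p\F}\equiv 0\bigr\},
\]
and observe that the fiber of the second projection $\pi_2\colon\mathcal{T}\to\mathbb{P}^{N-1}$ over $\alpha=\{\ell=0\}$ is exactly $\tang(\F,\alpha)$.  I bound $\dim\mathcal{T}$ via the first projection $\pi_1\colon\mathcal{T}\to M\setminus\sing(\F)$: the fiber over $p$ consists of the hyperplanes containing $\langle p,T_p\F\rangle$, a subspace of dimension $n+1$ outside the auxiliary locus $S=\{p:p\in T_p\F\}$, and of dimension $n$ on $S$.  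Away from $S$ this yields $\dim\mathcal{T}\leq N+(N-n-2)=2N-n-2$, and upper semicontinuity of fiber dimension then places the $\alpha$ with $\dim\tang(\F,\alpha)\geq N-n$ inside a proper subvariety of $\mathbb{P}^{N-1}$; outside this subvariety the tangency locus has codimension $\geq n\geq 2$ in $\alpha$, as required.

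The main obstacle is the degenerate case $S=M$, in which $\F$ is $\C^*$-invariant and the count above degrades by one dimension.  I would address it by exploiting the descent of $\F$ to a foliation on $\mathbb{P}^{N-1}$: the Gauss map $p\mapsto T_p\F$ then factors through $\mathbb{P}^{N-1}\to\mathrm{Gr}(n,N)$, and a Kleiman--Bertini-type transversality argument against the Schubert subvariety $\{W\subset\alpha\}\cong\mathrm{Gr}(n,N-1)\subset\mathrm{Gr}(n,N)$, of codimension exactly $n$ in $\mathrm{Gr}(n,N)$, restores the codimension bound and again exhibits the exceptional hyperplanes as a proper subvariety of $\mathbb{P}^{N-1}$.
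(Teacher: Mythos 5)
Your route is genuinely different from the paper's: the paper takes finitely many $1$-forms $\omega_1,\dots,\omega_k$ generating the conormal sheaf, divides out their divisorial singular components, invokes a transversality statement for a \emph{single} $1$-form (from \cite{camacho1992}) for each of them, and intersects the resulting generic sets with the set of hyperplanes transverse to $\sing(\F)$. Your steps (a) and the dimension count over $M\setminus S$ are fine, modulo the routine care needed to pass from ``the bad set of hyperplanes has dimension $\le N-2$'' to genericity of its complement (properness of $\pi_2$ after shrinking the representative). Note also that what you are controlling --- the joint rank of the restricted conormal forms, i.e.\ the tangency locus --- is strictly stronger than what the paper's argument controls, which is only that each generator restricts to $\alpha$ with zero set of codimension $\ge 2$.

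The gap is the degenerate case $S=M$, and it is not repairable as proposed. When the radial field is tangent to $\F$, the Kleiman--Bertini count gives that the Gauss-map preimage of the Schubert cycle $\{W\subset\alpha\}$ has codimension at most $n$ in $\mathbb{P}^{N-1}$, hence dimension $\ge N-1-n$ whenever it is nonempty; its cone, which is $\tang(\F,\alpha)$, then has dimension $N-n$, i.e.\ codimension $n-1$ in $\alpha$. For $n=2$ this is $1$, not $\ge 2$, so the bound is \emph{not} restored. Moreover this is not slack in the estimate: take $N=4$, $n=2$ and $\F$ spanned by $R=\sum_i z_i\partial_{z_i}$ and $v=\sum_i\lambda_i z_i\partial_{z_i}$ with distinct $\lambda_i$ (so $[R,v]=0$ and $\sing(\F)$ is the union of the coordinate axes, of codimension $3$). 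For $\alpha=\{\sum_i a_iz_i=0\}$ with all $a_i\neq 0$ one computes $\tang(\F,\alpha)=\alpha\cap\{\sum_i a_i\lambda_i z_i=0\}$, a linear $2$-plane not contained in $\sing(\F)$: codimension exactly $1$ in $\alpha$, for \emph{every} such $\alpha$. So in the cone case the set of hyperplanes satisfying your condition (b) can be empty, and no genericity argument will produce it. To salvage the statement you must either exclude this case, or weaken the target: allow the restricted conormal sheaf to drop rank along a divisor and saturate it (cancel the one-codimensional components, as the paper does for the individual $\tilde\omega_i$), or prove only the per-generator transversality that the paper's proof actually establishes. As it stands, your own dimension count is evidence that the codimension-two requirement on the full tangency locus cannot hold generically for foliations tangent to the radial field.
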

\begin{proof}
We have
the following fact: if
$\omega$  is a germ of holomorphic $1-$form at $0 \in \C^{N}$ (not necessarily integrable) with
singular set of codimension at least two, then  the set of hyperplanes through $0 \in \C^{N}$ transverse to
$\omega$ is generic in
${\rm Gr}_{0}(N-1,N) \cong {\mathbb P}^{N-1}$. This is actually a consequence of   the proof of
\cite[Lemma 10]{camacho1992}.
The conormal sheaf   $\cl{C}$  of $\F$ is coherent and thus, generated by finitely many sections
 at $0 \in \C^{N}$, say $k$
 holomorphic $1-$forms $\omega_{1}, \ldots, \omega_{k}$.
For each $i=1,\ldots,k$, we can cancel one-codimensional singular components of  $\omega_{i}$, obtaining  holomorphic   $1-$forms
$\tilde{\omega}_{i}$ such   that $\sing(\tilde{\omega}_{i}) \geq 2$. Note that, since we are not
 assuming that $\F$ is reduced,  each
  $\tilde{\omega}_{i}$  does not necessarily define a section of $\cl{C}$,  yielding however a   section  outside $\sing(\F)$.
The set of hyperplanes transverse to each $\tilde{\omega}_{i}$ is a generic set
$\Gamma_{i} \subset {\rm Gr}_{0}(N-1,N)$. Let $\Gamma_{0}$ denote the generic set of hyperplanes  transverse to $\sing(\F)$ and  consider the set $\Gamma = \cap_{i=0}^{k} \Gamma_{i}$.
Then  $\Gamma \subset {\rm Gr}_{0}(N-1,N)$ is a generic set formed by hyperplanes transverse to $\F$.
In fact, fix $\alpha \in \Gamma$. Let $S_{0} = \sing(\F) \cap \alpha$ and $S_{i} = \sing(\tilde{\omega}_{i}|_{\alpha})$ for $i=1,\ldots,k$. Then $S = \cup_{i=0}^{k}S_{i}$ is a germ of analytic subset in $(\alpha,0) \cong (C^{N-1},0)$ of codimension at least two.
We assert that $\sing(\F|_{\alpha}) \subset S$. Indeed, if $p  \in \alpha \setminus  S$, then $p \not\in \sing(\F)$
and thus there are    $1-$forms $\omega_{i_{1}}, \ldots, \omega_{i_{N-n}}$, all of them non singular at $p$,  such that
 \[ T_{p}\F =\{ \omega_{i_{1}}(p) = \cdots = \omega_{i_{N-n}}(p) = 0 \} .\]
But $H$ is transverse to each  $\tilde{\omega}_{i_{\ell}}$  --- and also to
$\omega_{i_{\ell}}$ --- at $p$, giving that $p$ is not a singular point
for  $\F|_{\alpha}$.
\end{proof}

\bibliographystyle{plain}
\bibliography{referencias}

\def\polhk#1{\setbox0=\hbox{#1}{\ooalign{\hidewidth
  \lower1.5ex\hbox{`}\hidewidth\crcr\unhbox0}}} \def\cprime{$'$}
\begin{thebibliography}{10}

\bibitem{baouendi1999}
M.~S. Baouendi, P.~Ebenfelt, and L.~P. Rothschild.
\newblock {\em Real submanifolds in complex space and their mappings},
  volume~47 of {\em Princeton Mathematical Series}.
\newblock Princeton University Press, Princeton, NJ, 1999.

\bibitem{bretas2016}
J.~Bretas.
\newblock Folhea\c c\~oes holomorfas tangentes a subconjuntos {L}evi-flat
  ({P}ortuguese).
\newblock {\em PhD Thesis}, Universidade Federal de Minas Gerais, Brazil, 2016.

\bibitem{brunella2007}
M.~Brunella.
\newblock Singular {L}evi-flat hypersurfaces and codimension one foliations.
\newblock {\em Ann. Sc. Norm. Super. Pisa Cl. Sci. (5)}, 6(4):661--672, 2007.

\bibitem{brunella2011}
M.~Brunella.
\newblock Some remarks on meromorphic first integrals.
\newblock {\em Enseign. Math. (2)}, 58(3-4):315--324, 2012.

\bibitem{camacho1992}
C.~Camacho, A.~Lins~Neto, and P.~Sad.
\newblock Foliations with algebraic limit sets.
\newblock {\em Ann. of Math. (2)}, 136(2):429--446, 1992.

\bibitem{cartan1933}
E.~Cartan.
\newblock Sur la g\'eom\'etrie pseudo-conforme des hypersurfaces de l'espace de
  deux variables complexes.
\newblock {\em Ann. Mat. Pura Appl.}, 11(1):17--90, 1933.

\bibitem{cartan1957}
H.~Cartan.
\newblock Vari\'et\'es analytiques r\'eelles et vari\'et\'es analytiques
  complexes.
\newblock {\em Bull. Soc. Math. France}, 85:77--99, 1957.

\bibitem{cerveau2011}
D.~Cerveau and A.~Lins~Neto.
\newblock Local {L}evi-flat hypersurfaces invariants by a codimension one
  holomorphic foliation.
\newblock {\em Amer. J. Math.}, 133(3):677--716, 2011.

\bibitem{chow1969}
W.~L. Chow.
\newblock On meromorphic maps of algebraic varieties.
\newblock {\em Ann. of Math. (2)}, 89:391--403, 1969.

\bibitem{fernandez2013}
A.~Fern\'andez-P\'erez.
\newblock On {L}evi-flat hypersurfaces with generic real singular set.
\newblock {\em J. Geom. Anal.}, 23(4):2020--2033, 2013.

\bibitem{ghys2000}
{\'E}.~Ghys.
\newblock \`{A} propos d'un th\'eor\`eme de {J}.-{P}. {J}ouanolou concernant
  les feuilles ferm\'ees des feuilletages holomorphes.
\newblock {\em Rend. Circ. Mat. Palermo (2)}, 49(1):175--180, 2000.

\bibitem{gomez1989}
X.~G{\'o}mez-Mont.
\newblock Integrals for holomorphic foliations with singularities having all
  leaves compact.
\newblock {\em Ann. Inst. Fourier (Grenoble)}, 39(2):451--458, 1989.

\bibitem{hironaka1964}
H.~Hironaka.
\newblock Resolution of singularities of an algebraic variety over a field of
  characteristic zero. {I}, {II}.
\newblock {\em Ann. of Math. (2) {\bf 79} (1964), 109--203; ibid. (2)},
  79:205--326, 1964.

\bibitem{jouanolou1979}
J-P. Jouanolou.
\newblock {\em \'{E}quations de {P}faff alg\'ebriques}, volume 708 of {\em
  Lecture Notes in Mathematics}.
\newblock Springer, Berlin, 1979.

\bibitem{jiri2012}
J.~Lebl.
\newblock Algebraic {L}evi-flat hypervarieties in complex projective space.
\newblock {\em J. Geom. Anal.}, 22(2):410--432, 2012.

\bibitem{lebl2013}
J.~Lebl.
\newblock Singular set of a {L}evi-flat hypersurface is {L}evi-flat.
\newblock {\em Math. Ann.}, 355(3):1177--1199, 2013.

\bibitem{lebl2015}
J.~Lebl.
\newblock Singular {L}evi-flat hypersurfaces in complex projective space
  induced by curves in the {G}rassmannian.
\newblock {\em Internat. J. Math.}, 26(5):1550036, 17, 2015.

\bibitem{pereira2015}
J.~V. Pereira and L.~Pirio.
\newblock {\em An invitation to web geometry}, volume~2 of {\em IMPA
  Monographs}.
\newblock Springer, Cham, 2015.

\bibitem{pinchuk2016}
S.~Pinchuk, R.~Shafikov, and A.~Sukhov.
\newblock Segre envelopes of singular {L}evi-flat sets.
\newblock {\em Pre-print}, arXiv:1606.09294, 2016.

\bibitem{suwa1998}
T.~Suwa.
\newblock {\em Indices of vector fields and residues of singular holomorphic
  foliations}.
\newblock Actualit\'es Math\'ematiques. [Current Mathematical Topics]. Hermann,
  Paris, 1998.

\bibitem{dries1998}
L.~van~den Dries.
\newblock {\em Tame topology and o-minimal structures}, volume 248 of {\em
  London Mathematical Society Lecture Note Series}.
\newblock Cambridge University Press, Cambridge, 1998.

\end{thebibliography}

\medskip \medskip  \medskip
\noindent
Jane Bretas \\
Departamento de F\'isica e Matem\'atica \\
Centro Federal de Educa\c c\~ao Tecnol\'ogica de Minas Gerais \\
Av. Amazonas, 7675
  -- Belo Horizonte, BRAZIL  \\
janebretas@des.cefetmg.br

\medskip  \medskip  \medskip

\noindent
Arturo Fern\'andez-P\'erez\\
Departamento de Matem\'atica \\
Universidade Federal de Minas Gerais \\
Av. Ant\^onio Carlos, 6627  \  C.P. 702  \\
30123-970  --
Belo Horizonte -- MG,
BRAZIL \\
arturofp@mat.ufmg.br

\medskip \medskip \medskip

\noindent
Rog\'erio  Mol  \\
Departamento de Matem\'atica \\
Universidade Federal de Minas Gerais \\
Av. Ant\^onio Carlos, 6627  \  C.P. 702  \\
30123-970  --
Belo Horizonte -- MG,
BRAZIL \\
rmol@ufmg.br

\end{document}